\documentclass[11pt,leqno]{amsart}
\usepackage{epsfig}
\usepackage{amssymb}
\usepackage{amscd}
\usepackage[matrix,arrow]{xy}

\setlength{\textheight}{8.5in} \setlength{\textwidth}{6in}
\setlength{\topmargin}{0in} \setlength{\oddsidemargin}{0.25in}
\setlength{\evensidemargin}{0.25in}

\mathsurround=2pt

\newtheorem{theo}{Theorem}[section]
\newtheorem{lemma}[theo]{Lemma}
\newtheorem{defi}[theo]{Definition}
\newtheorem{prop}[theo]{Proposition}

\newtheorem{cor}[theo]{Corollary}
\newtheorem{remark}[theo]{Remark}

\newtheorem{example}[theo]{Example}
\numberwithin{equation}{section}

\raggedbottom
\bibcite{...}{...} 

\def\coh{\operatorname{coh}}
\def\Qcoh{\operatorname{Qcoh}}

\def\bR{{\mathbf R}}
\def\bL{{\mathbf L}}
\def\bP{{\mathbf P}}

\def\pre-tr{\operatorname{pre-tr}}

\def\Hom{\operatorname{Hom}}

\def\gr{\operatorname{gr}}



\newcommand{\bbA}{{\mathbb A}}
\newcommand{\bbC}{{\mathbb C}}

\newcommand{\bbR}{{\mathbb R}}

\newcommand{\bbZ}{{\mathbb Z}}

\newcommand{\bbP}{{\mathbb P}}

\newcommand{\bbQ}{{\mathbb Q}}

\newcommand{\cY}{{\mathcal Y}}

\newcommand{\cF}{{\mathcal F}}

\newcommand{\cO}{{\mathcal O}}

\newcommand{\cM}{{\mathcal M}}

\newcommand{\cD}{{\mathcal D}}

\newcommand{\cA}{{\mathcal A}}
\newcommand{\cB}{{\mathcal B}}

\newcommand{\cC}{{\mathcal C}}

\newcommand{\cW}{{\mathcal W}}
\newcommand{\cU}{{\mathcal U}}

\newcommand{\cH}{{\mathcal H}}

\newcommand{\cX}{{\mathcal X}}
\newcommand{\cZ}{{\mathcal Z}}

\newcommand{\Perf}{\operatorname{Perf}}
\newcommand{\Sh}{\operatorname{Sh}}
\newcommand{\supp}{\operatorname{Supp}}
\newcommand{\Ker}{\operatorname{Ker}}

\newcommand{\Ext}{\operatorname{Ext}}

\newcommand{\id}{\operatorname{id}}

\newcommand{\Mod}{\operatorname{Mod}}
\newcommand{\op}{\operatorname{op}}

\newcommand{\pt}{\operatorname{pt}}

\newcommand{\Coker}{\operatorname{Coker}}
\newcommand{\qc}{\operatorname{qc}}

\newcommand{\an}{\operatorname{an}}
\newcommand{\et}{\operatorname{et}}
\newcommand{\Fr}{\operatorname{Fr}}
\newcommand{\Supp}{\operatorname{Supp}}


\title[Categorical resolutions, poset schemes and Du Bois singularities]
{Categorical resolutions, poset schemes and Du Bois singularities}

\author{Valery A.~Lunts}
\address{Department of Mathematics, Indiana University,
Bloomington, IN 47405, USA} \email{vlunts@indiana.edu}

\begin{document}

\begin{abstract}We introduce the notion of a poset scheme and study 
the categories of quasi-coherent sheaves on such spaces. 
We then show that smooth poset schemes may be used to obtain 
categorical resolutions of singularities for usual singular schemes. 
We prove that a singular variety $X$ possesses such a resolution if 
and only if $X$ has Du Bois singularities. Finally we show that the de Rham-Du Bois 
complex for an algebraic variety $Y$ may be defined using any smooth poset scheme which 
satisfies the descent over $Y$ in the classical topology.
\end{abstract}

\maketitle

\tableofcontents

\section{Introduction}
\subsection{Categorical resolutions}
There is a good notion of smoothness for a DG algebra $A.$ Namely,
$A$ is called {\it smooth} if it is a perfect DG $A^{\op}\otimes
A$-module. This notion is Morita invariant: if DG algebras A and B
are derived equivalent (i.e. there exists a DG $A^{\op}\otimes
B$-module $M,$ such that the functor $(-)\stackrel{\bL}{\otimes
}_AM:D(A)\to D(B)$ is an equivalence), then $A$ is smooth if and
only if $B$ is such. This allows one to define smoothness of derived
categories $D(A),$ and consequently of cocomplete triangulated
categories which possess a compact generator (and have an
enhancement). Examples of such categories are the derived categories
$D(X)$ of quasi-coherent sheaves on quasi-compact and separated
schemes X (see for example \cite{BoVdB}). The scheme $X$ is smooth
if and only if the category $D(X)$ is smooth in the above sense.

In the paper \cite{Lu2} we have introduced the concept of a
categorical resolution of singularities. Namely, given a DG algebra
$A,$ a categorical resolution of $D(A)$ is a pair $(B,M),$ where $B$
is a smooth DG algebra and $M$ is a DG $A^{\op}\otimes B$-module,
such that the functor $(-)\stackrel{\bL}{\otimes }_AM:D(A)\to D(B)$
is full and faithful on the subcategory of perfect DG $A$-modules.
The main result of \cite{Lu2} is the following theorem.

\begin{theo}\label{prevmain} Let $X$ be a separated scheme of finite type
over a perfect field $k.$  Then

a) There exists a classical generator $E\in D^b(cohX)$, such that
the DG algebra $A=\bR \Hom (E,E)$ is smooth and hence the functor
$$\bR Hom (E,-):D(X)\to D(A)$$
is a categorical resolution.

b) Given any other classical generator $E^\prime \in D^b(cohX)$ with
$A^\prime =\bR \Hom (E^\prime ,E^\prime)$, the DG algebras $A$ and
$A^\prime $ are derived equivalent.
\end{theo}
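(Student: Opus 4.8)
The plan is to reduce both statements to the Morita theory of DG algebras, with the single genuinely geometric input being the existence of a \emph{strong} generator of $D^b(\coh X)$. First I would fix the canonical DG enhancement $\cA$ of $D(X)$ and recall the standard principle: if $E$ is a classical generator of an idempotent-complete thick subcategory $\cT\subseteq D(X)$ carrying the induced enhancement, then $A:=\bR\End(E)$ is a DG algebra and the functor $\bR\Hom(E,-)$ restricts to an equivalence $\cT\isomoto\Perf(A)$ sending $E\mapsto A$. I would apply this with $\cT=D^b(\coh X)$, which since $X$ is noetherian contains $\Perf(X)$ as a thick subcategory. Hence for \emph{any} classical generator $E$ of $D^b(\coh X)$ the functor $\bR\Hom(E,-)\colon D(X)\to D(A)$ is fully faithful on $D^b(\coh X)$, in particular on $\Perf(X)$, and it is therefore a categorical resolution as soon as $A$ is smooth.

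To produce a suitable $E$ I would invoke Rouquier's theorem that, for $X$ separated of finite type over a perfect field $k$, the category $D^b(\coh X)$ has finite Rouquier dimension, i.e.\ admits a strong generator $E$: there is an integer $N$ with $D^b(\coh X)=\langle E\rangle_N$. This is precisely where perfectness of $k$ enters, ensuring that $X\times_k X$ is again of finite type and that regularity coincides with smoothness over $k$. The heart of part a) is then the implication that $E$ strong $\Rightarrow A=\bR\End(E)$ smooth, i.e.\ that the diagonal bimodule $A$ lies in $\Perf(A^{\op}\otimes_k A)$. I would attack this through the two-sided bar resolution $\cdots\to A\otimes_k A\otimes_k A\to A\otimes_k A\to A$ of the diagonal bimodule: strong generation bounds by $N$ the number of cones needed to build every object of $\Perf(A)\simeq D^b(\coh X)$ out of the free module $A$, and a bimodule version of this estimate should let one truncate the bar complex after finitely many steps, exhibiting $A$ as a direct summand of a finite complex of free $A^{\op}\otimes_k A$-modules. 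I expect this truncation-and-summand argument to be the main obstacle, since it is exactly the point where the uniform ``generation in $N$ steps'' must be upgraded from left modules to bimodules.

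For part b) I would argue purely Morita-theoretically, with no further geometry. Both $A=\bR\End(E)$ and $A'=\bR\End(E')$ are endomorphism DG algebras of classical generators of the \emph{same} enhanced category $D^b(\coh X)$, so the principle of the first paragraph gives enhanced equivalences $\Perf(A)\simeq D^b(\coh X)\simeq\Perf(A')$ carrying $A\mapsto E$ and $A'\mapsto E'$. Composing them yields an enhanced equivalence $\Perf(A)\simeq\Perf(A')$, and Keller's DG Morita theorem then upgrades it to an equivalence $D(A)\simeq D(A')$ implemented by a bimodule; concretely one takes $M=\bR\Hom(E,E')$, viewed as a DG $A^{\op}\otimes_k A'$-module, so that $(-)\stackrel{\bL}{\otimes}_A M$ is the desired equivalence. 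This is exactly the notion of derived equivalence recalled in the introduction, so b) follows formally once the enhancement and the two generator identifications are in place.
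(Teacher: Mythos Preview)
This theorem is not proved in the present paper at all: it is quoted from \cite{Lu2} as background (``The main result of \cite{Lu2} is the following theorem''), and the paper immediately moves on. So there is no proof here to compare your proposal against.

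That said, your outline is essentially the strategy of \cite{Lu2}. Part b) is exactly right and purely formal: two classical generators of the same enhanced category $D^b(\coh X)$ have Morita-equivalent endomorphism DG algebras, with $M=\bR\Hom(E,E')$ implementing the equivalence. For part a), you correctly isolate Rouquier's strong-generation theorem for $D^b(\coh X)$ over a perfect field as the geometric input, and you correctly identify the remaining step as ``strong generator $\Rightarrow$ $A$ smooth'' via truncating the bar resolution of the diagonal bimodule. This is indeed the crux in \cite{Lu2}: one shows that if $\Perf(A)=\langle A\rangle_N$, then the brutally truncated bar complex of length $\sim N$ admits $A$ as a retract in $D(A^{\op}\otimes A)$, so the diagonal is perfect. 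Your honest flag that upgrading the uniform bound from one-sided modules to bimodules is the nontrivial point is accurate; in \cite{Lu2} this is handled by a direct argument with the bar complex rather than anything geometric about $X\times X$. Your sketch would be accepted as a correct plan, with that truncation lemma being the one place where real work remains.
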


This theorem provides an intrinsic categorical resolution for
$D(X).$ This resolution has the flavor of Koszul duality. The
resolving DG algebra $A$ is Morita equivalent to its opposite
$A^{\op}$ and usually has unbounded cohomology.

\begin{example}\label{dualnumb} If in Theorem \ref{prevmain}
$X=Spec(k[\epsilon]/\epsilon ^2),$ and $E=k,$ then $A=k[t],$ where
$\deg (t)=1.$
\end{example}

We should note that the notion of categorical resolutions is
different from the usual resolution of singularities. Namely if $X$
is an algebraic variety and $\sigma :\tilde{X}\to X$ is its
resolution of singularities, then $\bL \sigma ^*:D(X)\to
D(\tilde{X})$ is a categorical resolution if and only if $X$ has
rational singularities. If $D(X)\to D(A)$ is a categorical
resolution (and the singularities of $X$ are not rational), we find
that the category $D(A)$ has a closer relation to $D(X)$ than
$D(\tilde{X}).$ Also one may consider categorical resolutions of
nonreduced schemes.

\medskip

\noindent{\bf Conjecture.} Let $X$ be a separated scheme of finite
type over a field. Then there exists a smooth DG algebra $A$ with
$H^i(A)=0$ for $\vert i\vert >>0$ and a functor $D(X)\to D(A)$ which
is a categorical resolution.

\medskip

\subsection{Smooth poset schemes and Du Bois singularities}
In this article we introduce a new class of smooth categories, which
are constructed by "gluing" the categories $D(X)$ for smooth schemes
$X.$ Namely, we consider {\it poset schemes $\cX$} which by
definition are diagrams of schemes $\{X_\alpha\} _{\alpha \in S}$
indexed by elements of a finite poset $S$ with a morphism $f_{\alpha
\beta}:X_\alpha \to X_\beta$ iff $\alpha \geq \beta.$ There is a
natural notion of a quasi-coherent sheaf on $\cX,$ which gives us
the abelian category $Qcoh \cX$ and its derived category $D(\cX).$
This derived category is cocomplete and has a compact generator (if
all schemes $X_\alpha$ are separated and quasi-compact). So
$D(\cX)\simeq D(A)$ for a DG algebra $A.$ The category $D(\cX)$ is
smooth if the poset scheme $\cX$ is smooth (i.e. all schemes
$X_{\alpha}$ are such). In any case the category $D(\cX)$ has a
natural semi-orthogonal decomposition with semi-orthogonal summands
$D(X_\alpha), \alpha \in S.$ In this last sense we consider $D(\cX)$
as a gluing of the categories $D(X_{\alpha})$ along the morphisms
$f_{\alpha \beta}.$

There is a natural notion of a morphism $\pi :\cX \to X$ from a
poset scheme $\cX$ to a scheme $X$ and the corresponding functor
$\bL \pi ^*:D(X)\to D(\cX).$ We say that $\pi $ is a categorical
resolution if $\cX$ is smooth and $\bL \pi ^*$ is a categorical
resolution. We prove the following theorem (=Theorem
\ref{posetres=DuBois}).

\begin{theo} Let $X$ be a reduced separated scheme
of finite type over a field of characteristic zero. Then $X$ has a
categorical resolution by  a smooth poset scheme if and only if $X$
has Du Bois singularities.
\end{theo}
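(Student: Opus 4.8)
The plan is to collapse the categorical hypothesis to a statement about the single object $\bR\pi_*\cO_{\cX}$ and then identify that statement with the defining property of Du Bois singularities. First I would record that for a morphism $\pi:\cX\to X$ of the kind considered here the functor $\bL\pi^*$ has a right adjoint $\bR\pi_*$ satisfying the projection formula $\bR\pi_*\bL\pi^*P\cong P\otimes^{\bL}\bR\pi_*\cO_{\cX}$ for perfect $P$. Since the unit $P\to\bR\pi_*\bL\pi^*P$ is then obtained from $\cO_X\to\bR\pi_*\cO_{\cX}$ by tensoring with $P$, the functor $\bL\pi^*$ is fully faithful on perfect complexes if and only if the unit is an isomorphism for the single object $P=\cO_X$, i.e. if and only if the canonical map $\cO_X\to\bR\pi_*\cO_{\cX}$ is a quasi-isomorphism. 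As smoothness of $\cX$ is built into the definition, the whole theorem is reduced to: $X$ admits a smooth poset scheme $\pi:\cX\to X$ with $\cO_X\xrightarrow{\sim}\bR\pi_*\cO_{\cX}$ if and only if $X$ is Du Bois.

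For the ``if'' direction I would assume $X$ is Du Bois and, using resolution of singularities in characteristic zero, take a cubical (hence poset) hyperresolution $\pi:\cX\to X$ by smooth schemes satisfying cohomological descent, whose existence is guaranteed by the construction of Guillén--Navarro Aznar and Deligne. By the descent result advertised in the abstract, the de Rham--Du Bois complex is computed by any such smooth poset scheme, so $\bR\pi_*\cO_{\cX}\simeq\underline{\Omega}^0_X$ compatibly with the augmentation $\cO_X\to\underline{\Omega}^0_X$. The Du Bois hypothesis says exactly that this augmentation is a quasi-isomorphism, whence $\cO_X\xrightarrow{\sim}\bR\pi_*\cO_{\cX}$ and $\pi$ is a categorical resolution.

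For the ``only if'' direction I would start from a smooth poset scheme $\pi:\cX\to X$ that is a categorical resolution, so $\cO_X\to\bR\pi_*\cO_{\cX}$ is a quasi-isomorphism; the difficulty is that $\pi$ need not satisfy descent, hence need not compute $\underline{\Omega}^0_X$. The key step is to factor the unit through the Du Bois complex. Using contravariant functoriality of the Du Bois complex together with $\underline{\Omega}^0_{X_\alpha}\cong\cO_{X_\alpha}$ for the smooth members, the maps $\underline{\Omega}^0_X\to\bR\pi_{\alpha*}\cO_{X_\alpha}$ assemble into a natural morphism $\underline{\Omega}^0_X\to\bR\pi_*\cO_{\cX}$, and naturality of $\cO_{(-)}\to\underline{\Omega}^0_{(-)}$ identifies the composite $\cO_X\to\underline{\Omega}^0_X\to\bR\pi_*\cO_{\cX}$ with the unit. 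Since the unit is invertible, precomposing its inverse exhibits a left inverse to $\cO_X\to\underline{\Omega}^0_X$ in $\bD^b(X)$, so $\cO_X$ is a direct summand of $\underline{\Omega}^0_X$; by the splitting criterion of Kovács this forces the augmentation to be a quasi-isomorphism, i.e. $X$ is Du Bois.

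The hard part will be this ``only if'' direction, and within it two points need care. First, one must verify the adjunction $(\bL\pi^*,\bR\pi_*)$ and the projection formula in the poset-scheme setting and check that full faithfulness on all perfect complexes genuinely collapses to the object $\cO_X$, which is where tensor-compatibility of the unit is used. Second, and more delicately, one must construct the factoring map $\underline{\Omega}^0_X\to\bR\pi_*\cO_{\cX}$ for a smooth poset scheme that is \emph{not} assumed to satisfy descent and confirm its compatibility with the augmentation; it is this functoriality, rather than any direct computation of $\bR\pi_*\cO_{\cX}$, that lets the splitting criterion conclude the argument. By contrast the ``if'' direction is routine once the descent computation of $\underline{\Omega}^0_X$ from a smooth poset hyperresolution is available.
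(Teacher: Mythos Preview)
Your strategy matches the paper's: reduce to the unit $\cO_X\to\bR\pi_*\cO_{\cX}$ being an isomorphism (this is Proposition~\ref{criterion-cat-res}), use a cubical hyperresolution for the ``if'' direction, and for ``only if'' produce a factorization $\cO_X\to\underline{\Omega}^0_X\to\bR\sigma_*\cO_{\cX}$ whose composite is the unit, then invoke Kov\'acs's splitting criterion.

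Two points where the paper's execution differs from yours. First, your ``if'' direction invokes the descent theorem from the abstract, but that theorem (Theorem~\ref{descent}) is stated only for complex projective schemes, whereas the present statement is over an arbitrary field of characteristic zero. Fortunately no descent is needed: for a cubical hyperresolution the identification $\bR\pi_*\cO_{\cZ}=\underline{\Omega}^0_X$ is the \emph{definition}, so the Du Bois hypothesis gives $\cO_X\simeq\bR\pi_*\cO_{\cZ}$ directly. Second, and this is exactly the point you flagged as delicate, the paper does not build the map $\underline{\Omega}^0_X\to\bR\sigma_*\cO_{\cX}$ by assembling componentwise functoriality maps (where one would have to worry about homotopy coherence across the poset). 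Instead it forms the augmented poset scheme $\cX^+=\cX\cup\{X\}$ and chooses a single cubical hyperresolution $\cY\to\cX^+$; restricting over $X$ and over $\cX$ yields a commutative square
\[
\begin{array}{ccc}
\cY & \stackrel{\pi}{\longrightarrow} & \cX\\
\tilde{\sigma}\downarrow & & \downarrow\sigma\\
\cY_0 & \stackrel{\pi_0}{\longrightarrow} & X
\end{array}
\]
with $\pi_0$ and each fiber of $\pi$ hyperresolutions. Smoothness of $\cX$ gives $\cO_{\cX}\simeq\bR\pi_*\cO_{\cY}$, so the unit $\cO_X\to\bR\sigma_*\cO_{\cX}\simeq\bR(\pi_0\tilde{\sigma})_*\cO_{\cY}$ factors through $\bR(\pi_0)_*\cO_{\cY_0}=\underline{\Omega}^0_X$ via honest adjunction maps, yielding the required left inverse without any coherence bookkeeping.
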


The "if" direction in the theorem is essentially the definition of
Du Bois singularities (plus the work \cite{LNM1335}), and the other
direction is a consequence of the general functorial formalism which
we develop. This theorem proves the above conjecture in the case of
Du Bois singularities.

\begin{cor} Let $X$ be a reduced separated scheme of finite
type over a field of characteristic zero. Assume that $X$ has Du
Bois singularities. Then there exists a smooth DG algebra $A$ and a
categorical resolution $D(X)\to D(A),$ such that

1) $H^i(A)=0$ for $\vert i\vert >>0;$

2) $D(A)$ has a finite semi-orthogonal decomposition with summands
$D(X_i)$ where each $X_i$ is smooth and $X_1$ is a usual resolution
of $X;$

3) If $X$ is proper, then each $X_i$ is also proper.
In particular in this case the DG algebra $A$ is proper (has finite
dimensional cohomology).
\end{cor}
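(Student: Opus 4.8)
The plan is to derive the Corollary from the Theorem by unpacking the geometric construction behind the categorical resolution $\pi:\cX\to X$ and then transferring its properties to a DG algebra via the generator mechanism of Theorem \ref{prevmain}. By the Theorem, since $X$ has Du Bois singularities there exists a smooth poset scheme $\cX=\{X_\alpha\}_{\alpha\in S}$ over $X$ with $\bL\pi^*:D(X)\to D(\cX)$ a categorical resolution. The key structural input, stated in the introduction, is that $D(\cX)$ is cocomplete with a compact generator, hence $D(\cX)\simeq D(A)$ for some DG algebra $A$; since $\cX$ is smooth, $A$ is smooth, and the composite $D(X)\to D(\cX)\simeq D(A)$ is the desired categorical resolution. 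So I would first fix that equivalence and let $A=\bR\Hom(G,G)$ for a compact generator $G$ of $D(\cX)$.

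Next I would establish items (1) and (2) together, since both flow from the semi-orthogonal decomposition $D(\cX)=\langle D(X_\alpha)\rangle_{\alpha\in S}$ recorded in the introduction, indexed by the finite poset $S$. The plan is to argue that because $S$ is finite and each $X_\alpha$ is a smooth separated scheme of finite type, one may choose $G$ compatibly with this decomposition: namely $G=\bigoplus_\alpha i_{\alpha*}E_\alpha$ where $E_\alpha$ is the classical generator of $D^b(\coh X_\alpha)$ supplied by Theorem \ref{prevmain}a). Then $\End(G)$ inherits a block structure reflecting the gluing morphisms $f_{\alpha\beta}$, which gives the finite semi-orthogonal decomposition of $D(A)$ with summands $D(X_\alpha)$ claimed in (2). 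The boundedness in (1) requires checking that the $\Ext$-groups between the $i_{\alpha*}E_\alpha$, computed through the finitely many smooth schemes and finitely many structure morphisms, vanish in all but finitely many degrees; here I would use that each $X_\alpha$ is smooth (so internal $\Ext$'s are bounded), that the poset is finite, and that the maps $f_{\alpha\beta}$ have finite cohomological dimension. The identification of $X_1$ with a genuine resolution of $X$ should come from the construction of $\cX$ used in the "if" direction, which by the preceding remarks rests on \cite{LNM1335}: the Du Bois complex is realized via a smooth proper hyperresolution, and the initial stratum $X_1$ of the poset scheme is precisely such a resolution $\tilde X\to X$.

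For (3), when $X$ is proper I would note that a smooth hyperresolution of a proper variety over a field of characteristic zero may be taken with all $X_\alpha$ proper and smooth, so each summand $D(X_\alpha)=D^b(\coh X_\alpha)$ has finite-dimensional $\Hom$-spaces. Combining properness of each $X_\alpha$ with the finiteness of $S$ and the bounded-gluing statement from (1), the cohomology $H^\bullet(A)=\bigoplus_{\alpha,\beta}\bR\Hom(E_\alpha,\bR f_{\alpha\beta*}E_\beta)$ is a finite direct sum of finite-dimensional graded pieces, so $A$ is proper.

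The main obstacle I anticipate is item (1), the global boundedness $H^i(A)=0$ for $|i|\gg0$. Smoothness of $A$ by itself does not force bounded cohomology — indeed Example \ref{dualnumb} exhibits a smooth DG algebra $k[t]$ with $\deg t=1$ whose cohomology is unbounded, arising precisely from a resolution that is \emph{not} of this poset-scheme type. The point of the poset-scheme construction is that the gluing is by honest geometric morphisms among finitely many smooth schemes, so the unbounded tails that appear for a single non-rational singularity are spread out and truncated across the finitely many strata. Making this precise means bounding the cohomological dimension of each pushforward $\bR f_{\alpha\beta*}$ and of each internal $\bR\Hom$ on a smooth $X_\alpha$, and then observing that a finite poset contributes only finitely many compositions; I would isolate this as the technical heart of the proof and treat the rest as bookkeeping over the finite index set $S$.
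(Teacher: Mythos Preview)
Your proposal is correct and matches the route implicit in the paper (the Corollary is stated in the introduction without proof, as an immediate consequence of Theorem~\ref{posetres=DuBois} together with the structural results on poset schemes). Two small corrections: the special compact generator is $G=\bigoplus_\alpha \bL j_{\alpha+}E_\alpha$ (not $i_{\alpha*}$), and the block decomposition of $A$ reads
\[
H^\bullet(A)\;\simeq\;\bigoplus_{\alpha\geq\beta}\bR\Hom_{X_\alpha}\bigl(E_\alpha,\ \bL f_{\alpha\beta}^*E_\beta\bigr),
\]
with $\bL f_{\alpha\beta}^*$ rather than $\bR f_{\alpha\beta*}$ (this is exactly the computation in the proof of Theorem~\ref{smooth=smooth}). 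With that formula, item (1) is immediate: each $E_\beta$ is perfect, $\bL f_{\alpha\beta}^*$ preserves perfect complexes, and $\bR\Hom$ between perfect complexes on the smooth $X_\alpha$ is bounded; the sum is finite since $S$ is. Alternatively and more directly, Proposition~\ref{finite-inj-res} gives finite cohomological dimension of $Qcoh\,\cX$, which bounds $H^i(A)$ above, while perfectness of $G$ bounds it below. Your identification of $X_1$ as a usual resolution is the content of the first 2-resolution in the cubical hyperresolution, and the properness in (3) is arranged by taking projective resolutions at each stage.
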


Theorems
\ref{degeneration-standard},\ref{Hodge-to-deRham-degener-analytic},\ref{degeneration-algebraic},\ref{posetres=DuBois},\ref{descent}
may be viewed applications of our theory of smooth projective poset
schemes to the study of Du Bois singularities. In particular,
Theorem \ref{descent} asserts that the de Rham-Du Bois complex may
be defined by means of {\it any} smooth projective poset scheme
which satisfies the descent in the classical topology.

Our poset schemes are generalizations of {\it configuration schemes}
studied in \cite{Lu1}. (A configuration scheme is a poset scheme
where all the structure morphisms $f_{\alpha \beta}$ are closed
embeddings). Although the notion of a categorical resolution is not
present explicitly in \cite{Lu1} the ideas discussed in that paper
are similar to what we do here.

\subsection{Organization of the paper}
The paper consists of two parts. In the first one we develop in
detail the theory of poset schemes and discuss their relationship
with categorical resolutions. In the second part we prove three
results on degeneration of spectral sequences for smooth projective
poset schemes (Theorems
\ref{degeneration-standard},\ref{Hodge-to-deRham-degener-analytic},\ref{degeneration-algebraic})
These results are used to prove Theorem \ref{descent}. In Theorem
\ref{posetres=DuBois} we establish a connection between Du Bois
singularities and the existence of a categorical resolution by a
smooth poset scheme.

The appendix contains some general facts on functors between derived
categories of quasi-coherent sheaves.

In \cite{Lu2} we have collected some well known general categorical
facts about cocomplete triangulated categories, existence of compact
generators, smoothness of DG algebras, existence of enough
h-injectives in derived categories of Grothendieck abelian
categories, etc. These fact are not discussed in this article and we
refer the reader to \cite{Lu2} as needed.

I want to thank Tony Pantev who first suggested a connection between
categorical resolutions by poset schemes and Du Bois singularities.
A discussion of Du Bois singularities with Karl Schwede helped me
understand the subject. Finally I thank the participants of
algebraic geometry seminar in Steklov Institute in Moscow for their
interest in this work.

\part{Categorical resolutions by poset schemes}

\section{Quasi-coherent sheaves on poset schemes}

We fix a base field $k.$ A "scheme" means a separated quasi-compact
$k$-scheme, all morphisms of schemes are assumed to be separated and
quasi-compact. All the products and tensor products are taken over
$k$ unless specified otherwise. Throughout this article a "poset"
(=a partially ordered set)  means a {\it finite} poset.

\begin{defi} Let $S=\{\alpha ,\beta ...\}$ be a poset which
we consider as a category: the set $\Hom (\alpha ,\beta )$ has a
unique element if $\alpha \geq \beta$ and is empty otherwise. Then
an {\rm  $S$-scheme}, or an {\rm $S$-poset scheme}, or a {\rm poset
scheme} is simply a functor from $S$ to the category of schemes. In
other words, a poset scheme is a collection $\cX=\{X_\alpha
,f_{\alpha \beta }\}_{\alpha \geq \beta \in S}$, where $X_\alpha$ is
a scheme and $f_{\alpha \beta }: X_\alpha \to X_\beta$ is a morphism
of schemes, such that $f_{\beta \gamma}f_{\alpha \beta}=f_{\alpha
\gamma}.$ We call $\cX$ {\rm  noetherian, regular, smooth, of finite
type, essentially of finite type, etc.} if all schemes $X_\alpha \in
\cX$ are such.
\end{defi}

\begin{defi} Let $\cX=\{X_{\alpha },f_{\alpha \beta}\}$ be a
poset scheme. A quasi-coherent sheaf on $\cX$ is a collection $F
=\{F_\alpha \in Qcoh(X_{\alpha}), \varphi _{\alpha \beta}:f_{\alpha
\beta}^*F_\beta \to F_\alpha\}$ so that the morphisms $\varphi$
satisfy the usual cocycle condition: $\varphi _{\alpha
\gamma}=\varphi _{\alpha \beta}\cdot f^*_{\alpha \beta}(\varphi
_{\beta \gamma}).$ Quasi-coherent sheaves on $\cX$ form a category
in the obvious way. We denote this category $Qcoh\cX.$
\end{defi}

\begin{lemma} \label{abelian} The category $Qcoh \cX$ is an abelian
category.
\end{lemma}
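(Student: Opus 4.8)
The plan is to construct all kernels and cokernels \emph{levelwise}, i.e.\ on each scheme $X_\alpha$ separately, and to deduce the abelian axioms from the fact that each $Qcoh(X_\alpha)$ is abelian. First I would record the additive structure: a morphism $g\colon F\to G$ is a family $\{g_\alpha\colon F_\alpha\to G_\alpha\}$ compatible with the structure maps $\varphi$, and since compatibility is a linear condition, $\Hom(F,G)$ is a subgroup of $\prod_\alpha\Hom(F_\alpha,G_\alpha)$ with bilinear composition. The zero object is $\{0\}_\alpha$, and the biproduct of $F$ and $G$ is formed by $(F\oplus G)_\alpha=F_\alpha\oplus G_\alpha$ with structure map $\varphi^F_{\alpha\beta}\oplus\varphi^G_{\alpha\beta}$, using that $f^*_{\alpha\beta}$, being additive, commutes with finite direct sums.

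Next I would construct the kernel of $g\colon F\to G$. Set $K_\alpha=\Ker(g_\alpha)$ with inclusion $\iota_\alpha\colon K_\alpha\hookrightarrow F_\alpha$. To build the structure map $\psi_{\alpha\beta}\colon f^*_{\alpha\beta}K_\beta\to K_\alpha$, I would compose $f^*_{\alpha\beta}(\iota_\beta)$ with $\varphi^F_{\alpha\beta}$ to get a map $f^*_{\alpha\beta}K_\beta\to F_\alpha$; the naturality of $g$ together with $g_\beta\iota_\beta=0$ shows this map is killed by $g_\alpha$, so it factors uniquely through $\Ker(g_\alpha)=K_\alpha$, defining $\psi_{\alpha\beta}$. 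Note that this step uses only the universal property of the kernel, \emph{not} any exactness of $f^*_{\alpha\beta}$. The cocycle identity for $\psi$ follows by composing with the monomorphism $\iota_\alpha$ and invoking the cocycle identity for $\varphi^F$ together with the functoriality of $f^*_{\alpha\beta}$. One then checks directly that $(K,\psi)$ with $\iota\colon K\to F$ is the categorical kernel of $g$ in $Qcoh\cX$.

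The cokernel is constructed dually, and this is the one place where the exactness properties of $f^*_{\alpha\beta}$ genuinely enter. Put $C_\alpha=\Coker(g_\alpha)$ with projection $p_\alpha\colon G_\alpha\twoheadrightarrow C_\alpha$. Since $f^*_{\alpha\beta}$ is a left adjoint of $f_{\alpha\beta\,*}$, it is right exact and hence preserves cokernels, giving a canonical identification $f^*_{\alpha\beta}C_\beta\cong\Coker\big(f^*_{\alpha\beta}(g_\beta)\big)$. The composite $p_\alpha\circ\varphi^G_{\alpha\beta}\colon f^*_{\alpha\beta}G_\beta\to C_\alpha$ annihilates the image of $f^*_{\alpha\beta}(g_\beta)$ --- again by naturality of $g$ and $p_\alpha g_\alpha=0$ --- so it descends to a map $\chi_{\alpha\beta}\colon f^*_{\alpha\beta}C_\beta\to C_\alpha$, and the cocycle identity follows by precomposing with the epimorphism $f^*_{\alpha\beta}(p_\beta)$. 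It is crucial here that $f^*$ preserves cokernels, for otherwise $f^*_{\alpha\beta}C_\beta$ would not be the correct source object. This gives $(C,\chi)$ the structure of an object of $Qcoh\cX$, which is the cokernel of $g$.

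Finally, I would verify the remaining axiom: that for every $g\colon F\to G$ the canonical morphism $\operatorname{Coim}(g)\to\im(g)$ is an isomorphism. By the constructions above, kernels, cokernels, images and coimages are all computed levelwise, so at level $\alpha$ this canonical morphism is precisely $\operatorname{Coim}(g_\alpha)\to\im(g_\alpha)$, which is an isomorphism because $Qcoh(X_\alpha)$ is abelian. A morphism in $Qcoh\cX$ that is an isomorphism at every level is an isomorphism (its levelwise inverse is automatically compatible with the structure maps), so $\operatorname{Coim}(g)\to\im(g)$ is an isomorphism and $Qcoh\cX$ is abelian. I expect the main obstacle to be essentially bookkeeping: checking that each levelwise-defined transition map satisfies the cocycle condition, where the cokernel case is the delicate one and relies on the right-exactness (colimit preservation) of the pullback functors $f^*_{\alpha\beta}$.
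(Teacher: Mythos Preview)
Your proposal is correct and follows exactly the paper's approach: kernels and cokernels are defined componentwise, with the key observation that the cokernel is well defined because the pullback functors $f^*_{\alpha\beta}$ are right-exact. The paper's proof is a two-line sketch of precisely this argument, whereas you have carefully spelled out the construction of the transition maps, the cocycle verifications, and the coimage--image isomorphism.
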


\begin{proof} Indeed, given a morphism $g:F\to G$ in $Qcoh \cX$ we
define $\Ker (g)$ and $\Coker (g)$ componentwise. Namely, put $\Ker
(g)_\alpha := \Ker (g_\alpha ),$ $\Coker (g)_\alpha :=\Coker
(g_\alpha).$ Note that $\Coker (g)$ is well defined since the
functors $f^*_{\alpha \beta}$ are right-exact.
\end{proof}

\begin{remark}\label{alternative-def} A quasi-coherent sheaf $F$
on a poset scheme $\cX =\{X_\alpha ,f_{\alpha \beta}\}$ can be
equivalently defined as a collection $F =\{F_\alpha \in
Qcoh(X_{\alpha}), \psi  _{\alpha \beta}:F_\beta \to f_{\alpha \beta
*}F_\alpha\},$ so that the morphisms $\psi$ satisfy the usual
cocycle condition: $\psi _{\alpha \gamma} =f_{\beta \gamma *}(\psi
_{\alpha \beta})\cdot \psi _{\beta \gamma}.$
\end{remark}

\begin{defi} The quasi-coherent sheaf  $\cO _{\cX}=\{ \cO
_{X_\alpha},\phi _{\alpha \beta}=\id\}$ is called the {\rm structure
sheaf of $\cX.$ } Also for each $i\geq 0$ we have the natural sheaf
$\Omega ^i_{\cX}$ - the i-th exterior power of the sheaf of Kahler
differentials $\Omega ^1_{\cX}.$ Together these sheaves form the
{\rm deRham complex} $\Omega ^\bullet _{\cX}$ (as usual the
differential in $\Omega ^\bullet _{\cX}$ is not $\cO _{\cX}$-linear;
it is a differential operator of order 1).
\end{defi}

\subsection{Operations with quasi-coherent sheaves on poset
schemes}\label{operations}

 Let $S$ be a finite poset and $\cX$ be an $S$-scheme.
 Denote for short $\cM =Qcoh\cX$ and $\cM _\alpha =QcohX_\alpha$.
For $F\in \cM$ define its support $\supp(F)=\{\alpha \in S\vert
F_{\alpha }\neq 0\}$.

Define a topology on $S$ by taking as a basis of open sets the
subsets $U_{\alpha }=\{\beta \in S\vert \beta \geq \alpha\}$.

Note that $Z_{\alpha }=\{ \gamma \in S\vert \gamma \leq \alpha \} $
is a closed subset in $S$.

Let $U\subset S$ be open and $Z=S-U$ -- the complementary closed.
Let $\cM_U$ (resp. $\cM_Z$) be the full subcategory of $\cM$
consisting of objects $F$ with support in $U$ (resp. in $Z$). For
every object $F$ in $\cM$ there is a natural short exact sequence
$$0\to F_U\to F\to F_Z\to 0,$$
where $F_U\in \cM_U$, $F_Z\in \cM_Z$. Indeed, take
$$(F_U)_{\alpha }=\begin{cases}
F_{\alpha}, &\ \text{if $\alpha\in U$}, \\
0,          &\ \text{if $\alpha\in Z$}.
\end{cases}
$$
$$(F_Z)_{\alpha }=\begin{cases}
F_{\alpha }, &\ \text{if $\alpha\in Z$}, \\
0,           &\ \text{if $\alpha\in U$}.
\end{cases}
$$
We may consider $U$ (resp.$Z$) as a subcategory of $S$ and restrict
the poset scheme $\cX$ to $U$ (resp. to $Z$). Denote these
restrictions by $\cX (U)$ and $\cX (Z)$ and the corresponding
categories by $\cM (U)$ and $\cM(Z)$ respectively.

Denote by $j:U\hookrightarrow S$ and $i:Z\hookrightarrow S$ the
inclusions. We get the obvious restriction functors
$$j^*=j^!:\cM\to\cM(U),\quad i^*:\cM \to \cM(Z).$$
Clearly these functors are exact. The functor $j^*$ has an exact
left adjoint $j_!:\cM(U)\to \cM$ (``extension by zero''). Its image
is the subcategory $\cM_U$. The functor $i^*$ has an exact right
adjoint $i_*=i_!:\cM(Z)\to \cM$ (also ``extension by zero''). Its
image is the subcategory $\cM_Z$. It follows that $j^*$ and $i_*$
preserve injectives (as right adjoints to exact functors). We have
$j^*j_!=Id$, $i^*i_*=Id$.

Note that the short exact sequence above is just
$$0\to j_!j^*F\to F\to i_*i^*F\to 0,$$
where the two middle arrows are the adjunction maps.

The functor $i_*$ also has a left-exact right adjoint functor $i^!$.
Namely $i^!F$ is the largest subobject of $F$ which is supported on
$Z$.

For $\alpha \in S$ denote by $j_{\alpha }:\{\alpha \}\hookrightarrow
S$ the inclusion. The inverse image functor $j_{\alpha }^*:\cM \to
\cM _{\alpha } ,$ $F\mapsto F_{\alpha }$ has a right-exact left
adjoint $j_{\alpha +}$ defined as follows
$$(j_{\alpha +}P)_{\beta }=\begin{cases}
f^* _{\beta \alpha}P, &\ \text{if $\beta \geq \alpha$}, \\
0, &\ \text{otherwise}.
\end{cases}
$$
Thus for $P\in \cM _\alpha $, $\supp j_{\alpha +}P\subset U_{\alpha
}$.

We also consider the ``extension by zero'' functor $j_{\alpha !}:\cM
_\alpha \to \cM $ defined by
$$j_{\alpha !}(P)_{\beta}=\begin{cases}
P, &\ \text{if $\alpha =\beta$}, \\
0, &\ \text{otherwise}.
\end{cases}
$$

\begin{lemma} The  functor $j_{\alpha }^*:\cM\to \cM_\alpha $ has a
right adjoint $j_{\alpha *}$. This functor $j_{\alpha *}$ is
left-exact and preserves injectives. For $P\in \cM_\alpha $ $\supp
(j_{\alpha
*}P) \subset Z_{\alpha }$.
\end{lemma}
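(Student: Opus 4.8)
The plan is to write down $j_{\alpha *}P$ explicitly, one component at a time, and then verify the adjunction directly. For $P\in\cM_\alpha$ I set
$$(j_{\alpha *}P)_\beta=\begin{cases} f_{\alpha\beta *}P, & \beta\le\alpha,\\ 0, & \text{otherwise},\end{cases}$$
which already forces $\supp(j_{\alpha *}P)\subset Z_\alpha$. To supply the gluing data it is cleanest to use the dual description of a quasi-coherent sheaf from Remark~\ref{alternative-def}, in terms of morphisms $\psi_{\gamma\delta}\colon F_\delta\to f_{\gamma\delta *}F_\gamma$. Whenever $\gamma\ge\delta$ and $\gamma\le\alpha$ (which forces $\delta\le\alpha$), functoriality $f_{\gamma\delta}f_{\alpha\gamma}=f_{\alpha\delta}$ furnishes a canonical isomorphism $f_{\alpha\delta *}P\isomoto f_{\gamma\delta *}f_{\alpha\gamma *}P$, and I take $\psi_{\gamma\delta}$ to be this isomorphism; in every remaining case the target $f_{\gamma\delta *}(j_{\alpha *}P)_\gamma$ vanishes and I set $\psi_{\gamma\delta}=0$. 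The cocycle identity $\psi_{\gamma\varepsilon}=f_{\delta\varepsilon *}(\psi_{\gamma\delta})\cdot\psi_{\delta\varepsilon}$ then reduces to the coherence of these canonical isomorphisms, so $j_{\alpha *}P$ is a well-defined object of $\cM$.

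Next I would produce a natural bijection $\Hom_{\cM}(F,j_{\alpha *}P)\cong\Hom_{\cM_\alpha}(F_\alpha,P)$. In one direction a morphism $g\colon F\to j_{\alpha *}P$ goes to its $\alpha$-component $g_\alpha\colon F_\alpha\to P$. In the other direction, given $h\colon F_\alpha\to P$, I define $g_\beta=f_{\alpha\beta *}(h)\cdot\psi^F_{\alpha\beta}$ for $\beta\le\alpha$ and $g_\beta=0$ otherwise; since $f_{\alpha\alpha}=\id$ and $\psi^F_{\alpha\alpha}=\id$ this recovers $g_\alpha=h$. The one point needing checking is that this $g$ commutes with the gluing maps: for indices $\le\alpha$ this is precisely the cocycle condition for $\psi^F$ (applied to $\alpha\ge\gamma\ge\delta$), and for the other indices both sides vanish. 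The two assignments are mutually inverse, because compatibility of an arbitrary $g$ with the structure map of $j_{\alpha *}P$ in a pair $\alpha\ge\beta$ (where that map is the identity) shows that $g$ is already recovered from $g_\alpha$ by the displayed formula.

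Finally, left-exactness and preservation of injectives are formal. By Lemma~\ref{abelian} kernels and cokernels in $\cM$ are computed componentwise, so the evaluation functor $j_\alpha^*$ is exact, whence its right adjoint $j_{\alpha *}$ is automatically left-exact and sends injectives to injectives. (Left-exactness is equally visible directly, as each $f_{\alpha\beta *}$ is left-exact.) I do not anticipate a real obstacle here; the only step demanding attention is the compatibility verification above, where one should unwind the cocycle condition of Remark~\ref{alternative-def} in the correct index order. Working with the primary $\varphi$-description instead would be more awkward, precisely because the structure maps of $j_{\alpha *}P$ are naturally isomorphisms on the $\psi$-side.
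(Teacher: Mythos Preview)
Your proof is correct and follows essentially the same construction as the paper: the same formula for $(j_{\alpha *}P)_\beta$, the same adjunction argument, and the same formal deduction of left-exactness and preservation of injectives. The one presentational difference is that the paper works with the primary $\varphi$-description, where the structure map $f^*_{\gamma\delta}f_{\alpha\delta *}P\to f_{\alpha\gamma *}P$ is the adjunction counit, and then proves injectivity of the restriction $g\mapsto g_\alpha$ by a small diagram chase; your choice of the $\psi$-description makes the structure maps canonical isomorphisms and lets you verify directly that the two assignments are mutual inverses, which is a modest but genuine simplification.
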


\begin{proof}
Given $P\in \cM _\alpha $ we set
$$j_{\alpha *}(P)_{\gamma}=\begin{cases}
f_{\alpha \gamma *}(P), &\ \text{if $\gamma \leq \alpha$}, \\
0,                      &\ \text{otherwise},
\end{cases}
$$
and the structure map
$$\varphi _{\gamma \delta } :f^* _{\gamma \delta }((j_{\alpha *}P)_{\delta })
\to (j_{\alpha *}P)_{\gamma }$$ is the adjunction map
$$f^* _{\gamma \delta }f _{\alpha \gamma * }P=
f^* _{\gamma \delta }f _{\gamma \delta * }f _{\alpha \gamma *}P \to
f _{\alpha \delta *}P$$ if $\delta \leq \gamma \leq \alpha$ and
$\varphi _{\gamma \delta }=0$ otherwise.

It is clear that $j_{\alpha *}$ is left-exact and that $\supp
(j_{\alpha
*}P)\subset Z_{\alpha }$.

Let us prove that $j_{\alpha *}$ is the right adjoint to
$j_{\alpha}^*$.

Let $P\in \cM_\alpha $ and $M=\{ M_{\gamma},\varphi _{\gamma
\beta}\}\in \cM$. Given $g_{\alpha }\in \Hom (M_{\alpha },P)$ for
each $\gamma \leq \alpha $ we obtain a map $g_{\alpha}\cdot\varphi
_{\alpha \gamma }: f^* _{\alpha \gamma }M_{\gamma }\to P$ and hence
by adjunction $g_{\gamma }:M_{\gamma }\to f _{\alpha \gamma *}P=
(j_{\alpha
*}P)_{\gamma }$.
The collection $g=\{g_{\gamma }\} $ is a morphism $g:M\to j_{\alpha
*}P$. It remains to show that the constructed map
$$\Hom (M_{\alpha },P)\to \Hom (M, j_{\alpha *}P)$$
is surjective or, equivalently, that the restriction map
$$\Hom (M,j_{\alpha *}P)\to \Hom (M_{\alpha },P),\quad g\mapsto g_{\alpha }$$
is injective.

Assume that  $0\neq g\in \Hom (M,j_{\alpha *}P)$, i.e. $g_{\gamma
}\neq 0$ for some $\gamma \leq \alpha $. By definition we have the
commutative diagram
$$
\begin{CD}
f^* _{ \alpha \gamma }M_{\gamma }@ >f^*_{\alpha \gamma}(g_{\gamma
})>>
f^* _{\alpha \gamma }f _{\alpha \gamma * }P\\
@V\varphi _{\alpha \gamma }VV  @VV\epsilon_PV\\
M_{\alpha }@>g_{\alpha }>>P,
\end{CD}
$$
where $\epsilon_P$ is the adjunction morphism. Note that
$\epsilon_Pf^* _{\alpha \gamma }(g_{\gamma }):f^* _{ \alpha \gamma}
M_{\gamma }\to P$ is the morphism, which corresponds to $g_{\gamma
}: M_{\gamma }\to f _{\alpha \gamma * }P$ by the adjunction
property. Hence $\epsilon _Pf^* _{ \alpha \gamma }(g_{\gamma })\neq
0$. Therefore $g_{\alpha }\neq 0$. This shows the injectivity of the
restriction map $g\mapsto g_{\alpha }$ and proves that $j_{\alpha
*}$ is the right adjoint to $j_{\alpha }^*$. Finally, $j_{\alpha *}$
preserves injectives being the right adjoint to an exact functor.
\end{proof}

\begin{lemma}\label{Groth} The abelian category $\cM $ is a Grothendieck
category. In particular it has enough injectives and the
corresponding category of complexes $C(\cM)$ has enough h-injectives
\cite{KaSch},Thm.14.1.7.
\end{lemma}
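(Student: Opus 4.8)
The plan is to verify directly the defining axioms of a Grothendieck category: that $\cM$ is abelian (already established in Lemma \ref{abelian}), cocomplete, that filtered colimits are exact (the AB5 axiom), and that $\cM$ admits a set of generators. Once these are in place, the existence of enough injectives, and of enough h-injectives in $C(\cM)$, follows from the general theory in the form cited.

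First I would check that colimits in $\cM$ are computed componentwise. Each category $\cM_\alpha = Qcoh(X_\alpha)$ is a Grothendieck category, hence cocomplete, and each structure functor $f^*_{\alpha\beta}$, being a left adjoint to $f_{\alpha\beta *}$, commutes with arbitrary colimits. Therefore, given a diagram $\{F^{(i)}\}_{i\in I}$ in $\cM$, one sets $(\colim_i F^{(i)})_\alpha := \colim_i F^{(i)}_\alpha$ and obtains the required structure maps from the canonical isomorphisms $f^*_{\alpha\beta}(\colim_i F^{(i)}_\beta)\cong \colim_i f^*_{\alpha\beta}F^{(i)}_\beta$, the cocycle condition being inherited in the colimit. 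This yields all small colimits. Since kernels and cokernels in $\cM$ are likewise computed componentwise (Lemma \ref{abelian}) and each $\cM_\alpha$ satisfies AB5, exactness of filtered colimits in $\cM$ is checked componentwise and follows at once.

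The main step is to produce a generating set, and here I would exploit the adjoint functors introduced above. For each $\alpha$ the functor $j_\alpha^*:\cM\to\cM_\alpha$, $F\mapsto F_\alpha$, has the left adjoint $j_{\alpha +}$, so that $\Hom_{\cM}(j_{\alpha +}P, F)\cong \Hom_{\cM_\alpha}(P, F_\alpha)$ for all $P\in\cM_\alpha$ and $F\in\cM$. Choosing in each Grothendieck category $\cM_\alpha$ a set of generators $\{G_\alpha^{(\lambda)}\}$, I claim the collection $\{j_{\alpha +}G_\alpha^{(\lambda)}\}_{\alpha,\lambda}$ generates $\cM$. Indeed, if $F'\hookrightarrow F$ is a monomorphism in $\cM$ which is not an isomorphism, then some component $F'_\alpha\hookrightarrow F_\alpha$ fails to be an isomorphism, so there is a generator $G_\alpha^{(\lambda)}$ and a morphism $G_\alpha^{(\lambda)}\to F_\alpha$ not factoring through $F'_\alpha$; by the displayed adjunction this corresponds to a morphism $j_{\alpha +}G_\alpha^{(\lambda)}\to F$ not factoring through $F'$. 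Hence the collection is a set of generators, and taking its coproduct (which exists by cocompleteness) produces a single generator.

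Having verified that $\cM$ is abelian, cocomplete, AB5, and generated, I conclude that $\cM$ is a Grothendieck category; the final assertions then follow from the cited general results, \cite{KaSch}, Thm.~14.1.7. I expect the componentwise computation of colimits and (co)kernels to make the AB5 verification routine; the one place requiring genuine care is the generator construction, where the adjunction $\Hom_{\cM}(j_{\alpha +}P,-)\cong\Hom_{\cM_\alpha}(P,(-)_\alpha)$ is precisely what converts generators of the individual $\cM_\alpha$ into generators of the glued category $\cM$.
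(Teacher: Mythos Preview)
Your proposal is correct and follows essentially the same route as the paper's proof: both verify cocompleteness and AB5 componentwise (using that the $f^*_{\alpha\beta}$ commute with colimits), and both construct a generator by choosing generators $M_\alpha$ in each $Qcoh(X_\alpha)$ and pushing them into $\cM$ via $j_{\alpha+}$, invoking the adjunction $\Hom_{\cM}(j_{\alpha+}P,F)\cong\Hom_{\cM_\alpha}(P,F_\alpha)$. The only cosmetic difference is that the paper verifies the ``$\Hom(M,-)$ reflects isomorphisms'' form of the generator condition, whereas you verify the ``separates proper subobjects'' form; these are equivalent in this setting.
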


\begin{proof} For a usual quasi-compact and quasi-separated scheme
$X $ the category $QcohX$ is known to be Grothendieck \cite{ThTr},
Appendix B. The category $\cM$ is abelian \ref{abelian} and has
arbitrary direct sums (since the "gluing" functors $f_{\alpha
\beta}^*$ preserve direct sums), so it has arbitrary colimits.
Filtered colimits are exact, because the exactness is determined
locally on each $X_\alpha.$ It remains to prove the existence of a
generator for the abelian category $\cM.$ For each $\alpha \in S$
choose a generator $M_\alpha \in Qcoh X_\alpha.$ We claim that $M:=
\oplus _\alpha (j_{\alpha +}M_\alpha)$ is a generator in $\cM.$
Indeed, let $g:F\to G$ be a morphism in $\cM,$ such that $g_*:\Hom
(M,F)\to \Hom (M,G)$ is an isomorphism. We have
$$\Hom (M,-)=\oplus _\alpha \Hom
(j_{\alpha +}M_\alpha ,-)=\oplus _{\alpha} \Hom (M_\alpha
,(-)_{\alpha}).$$ So for each $\alpha $ the map $g_{\alpha *}:\Hom
(M_{\alpha},F_\alpha)\to \Hom (M_{\alpha},G_{\alpha})$ is an
isomorphism, hence $g_\alpha$ is an isomorphism. Thus $g$ is an
isomorphism.
\end{proof}

\subsection{Summary of functors and their properties}\label{summary-1}
 For reader's
convenience we list all the
functors introduced so far together with their properties.

\noindent{\underline{Functors}:} $j^*=j^!,  j_!, i^*, i_*=i_!, i^!,
j^*_{\alpha}, j_{\alpha +}, j_{\alpha *}.$

\noindent{\underline{Exactness}:} $j^*, j_!, i^*, i_*, j^*_{\alpha}$
- exact; $ i^!, j_{\alpha *}$ - left-exact; $ j_{\alpha +}$ -
right-exact.

\noindent{\underline{Adjunction}:} $(j_!,j^*), (i^*, i_*),
(i_*,i^!), (j_{\alpha +}, j^*_{\alpha}), (j^*_{\alpha}, j_{\alpha
*})$ are adjoint pairs.

\noindent{\underline{Preserve direct sums}:} All the above functors
preserve direct sums. (The functor
 $j_{\alpha
*}$ preserves direct sums because the morphisms $f_{\alpha \beta}$ are
quasi-compact.)

\noindent{\underline{Preserve injectives}:} $j^*, i_*, i^!,
j_{\alpha
*}$ preserve injectives because they are right adjoint to exact
functors.

\noindent{\underline{Tensor product}:} The bifunctor $\otimes :\cM
\times \cM \to \cM$ is defined componentwise: $(F\otimes G)_{\alpha}
=F_{\alpha }\otimes _{\cO _{X_\alpha}}G_{\alpha}.$

\subsection{Cohomological dimension of poset schemes}

We keep the notation of Subsection \ref{operations}

\begin{prop}\label{finite-inj-res}
If the poset scheme $\cX$ is regular noetherian, then $\cM$  has
finite cohomological dimension.
\end{prop}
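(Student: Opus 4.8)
The plan is to prove the stronger statement that there is an integer $N$, depending only on $\cX$, with $\Ext^n_{\cM}(F,G)=0$ for all $F,G\in\cM$ and all $n>N$. I would argue by induction on the number of elements of the poset $S$, peeling off one point at a time and tracking how the bound grows. For the base case $|S|=1$ we have $\cM=Qcoh X_\alpha$ for a single regular noetherian scheme $X_\alpha$ of finite Krull dimension; here finiteness of the cohomological dimension is classical (the $\Ext$-groups are computed by a local-to-global spectral sequence whose horizontal degree is bounded by $\dim X_\alpha$ via Grothendieck vanishing and whose vertical degree is bounded by the regularity of the local rings). Write $d_\alpha$ for the cohomological dimension of $Qcoh X_\alpha$.

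For the inductive step I would choose a \emph{maximal} element $\alpha\in S$, so that $U=\{\alpha\}=U_\alpha$ is open and $Z=S\setminus\{\alpha\}$ is the complementary closed subset, with $\cX(Z)$ a poset scheme on a smaller poset, of finite cohomological dimension $N_Z$ by induction. Applying $\Ext^\bullet_\cM(-,G)$ to the canonical short exact sequence $0\to j_!j^*F\to F\to i_*i^*F\to 0$ reduces the problem to bounding $\Ext^n_\cM(j_!j^*F,G)$ and $\Ext^n_\cM(i_*i^*F,G)$ separately. The first term is the easy one: since $j_!=j_{\alpha!}$ and $j^*=j^*_\alpha$ are exact and adjoint, and $j^*$ preserves injectives (Subsection \ref{summary-1}), resolving $G$ injectively and applying the adjunction identifies $\Ext^n_\cM(j_!A,G)\cong\Ext^n_{\cM_\alpha}(A,G_\alpha)$ with $A=F_\alpha$, which vanishes for $n>d_\alpha$.

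The main obstacle is the crossed term $\Ext^n_\cM(i_*B,G)$ with $B=i^*F$, because the relevant adjoint is the left-exact functor $i^!$, whose higher derived functors must be controlled. Here I would use that $i_*$ is exact and preserves injectives, so that for an injective resolution $G\to I^\bullet$ the adjunction gives $\Hom_\cM(i_*B,I^\bullet)\cong\Hom_{\cM(Z)}(B,i^!I^\bullet)$, where $i^!I^\bullet$ is a complex of injectives of $\cM(Z)$ (as $i^!$ preserves injectives) computing $\bR i^!G$; this yields a spectral sequence $\Ext^p_{\cM(Z)}(B,R^q i^!G)\Rightarrow\Ext^{p+q}_\cM(i_*B,G)$. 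The inductive hypothesis kills the $p$-direction for $p>N_Z$, so everything reduces to bounding $R^q i^!G$. For this I would invoke the recollement exact triangle $i_*\bR i^!G\to G\to \bR j_{\alpha*}G_\alpha\xrightarrow{+1}$: since $G$ is a single sheaf and $i_*$ is exact and fully faithful, the associated long exact sequence gives $R^{q+1}i^!G\cong R^q j_{\alpha*}G_\alpha$ for $q\ge 1$, and the $\gamma$-component of the latter is $R^q f_{\alpha\gamma*}G_\alpha$, which vanishes for $q>\dim X_\alpha$ by Grothendieck vanishing. Hence $R^q i^!G=0$ for $q>d_\alpha+1$.

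Combining the two estimates yields $\Ext^n_\cM(F,G)=0$ for $n>N_Z+d_\alpha+1$, i.e. the recursion $N_S\le N_{S\setminus\{\alpha\}}+d_\alpha+1$; iterating over a linear extension of $S$ gives the explicit uniform bound $N\le\sum_{\alpha\in S}(d_\alpha+1)$, proving that $\cM$ has finite cohomological dimension. I expect the only genuinely delicate point to be the control of $R^\bullet i^!$ — equivalently, the fact that extension-by-zero from the closed part does not create unbounded higher $\Ext$'s against arbitrary targets — while everything else is bookkeeping with the adjunctions and exactness properties already tabulated in Subsection \ref{summary-1}. A more symmetric alternative would be to assemble a single local-to-global spectral sequence indexed by chains in $S$, whose $E_1$-page involves the $\Ext$-groups over the individual $X_\alpha$ and whose total degree is bounded by $\max_\alpha d_\alpha$ plus the length of the longest chain in $S$; this bypasses the induction but requires building the chain complex of the poset explicitly.
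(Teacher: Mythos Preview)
Your proof is correct and uses the same inductive setup as the paper (peel off a maximal element $\alpha$, use the open-closed decomposition $U=\{\alpha\}$, $Z=S\setminus\{\alpha\}$, and the exact sequence $0\to j_!j^*F\to F\to i_*i^*F\to 0$), but the two arguments are essentially dual to each other in how they handle the two pieces.

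The paper argues by directly building finite injective resolutions of $F$. For them the piece $i_*i^*F$ is the trivial one: $i_*$ preserves injectives, so an inductive finite injective resolution of $i^*F$ in $\cM(Z)$ pushes forward. Their work goes into $j_!j^*F$, because $j_!$ does not preserve injectives; they detour through $j_*j^*F$ (using that $j_*=j_{\alpha*}$ does preserve injectives), observe that the cones involved are supported on $Z$, and apply induction again. No spectral sequences or derived $i^!$ appear.

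You instead bound $\Ext^\bullet_\cM(-,G)$. For you $j_!j^*F$ is the trivial piece, via the adjunction $(j_!,j^*)$ with $j^*$ preserving injectives. Your work goes into $i_*i^*F$, where you invoke the Grothendieck spectral sequence for the composite $\Hom(B,-)\circ i^!$ and then bound $R^\bullet i^!G$ via the recollement triangle $i_*\bR i^!G\to G\to \bR j_{\alpha*}G_\alpha$ together with Grothendieck vanishing for the fibers of $\bR j_{\alpha*}$. This is correct (the triangle follows from the adjunctions already recorded in Subsection~\ref{summary-1}, since $j_\alpha^*\bR j_{\alpha*}=\id$ and the cone is supported on $Z$), and it has the virtue of producing the explicit recursion $N_S\le N_{S\setminus\{\alpha\}}+d_\alpha+1$. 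The paper's argument is lighter on machinery and never needs to analyze $\bR i^!$; yours is more quantitative and closer in spirit to the chain-complex alternative you sketch at the end.
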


\begin{proof} The proposition asserts that any $F$ in $\cM$ has a
finite injective resolution. Equivalently, a finite complex in $\cM$
is quasi-isomorphic to a finite complex of injectives. We argue by
induction on the cardinality of $S,$ the case $\vert S\vert =1$ is
well known.

Let $\beta \in S$ be a biggest element. Put $U=U_{\beta }=\{\beta
\}$, $Z=S-U$. Let $j=j_{\beta}:U\hookrightarrow S$ and $i:
Z\hookrightarrow S$ be the corresponding open and closed embeddings.

Fix $F$ in $\cM;$ it suffices to find finite injective resolutions
for $j_!j^*F$ and $i_*i^*F$. Let $j^*F\to I_1$, $i^*F\to I_2$ be
such resolutions in categories $\cM(U)$ and $\cM(Z)$ respectively.
Then $i_*i^*F\to i_*I_2$ will be an injective resolution in $\cM$.
Note that $j_*I_1$ is a (finite) complex of injectives in $\cM$ and
that the cone $K$ of the natural morphism $j_*j^*F\to j_*I_1$ is
acyclic on $X_\beta$. Hence by the induction assumption $K$ is
quasi-isomorphic to $i_*J,$ where $J$ is a finite complex of
injectives in $\cM (Z)$. Therefore the object $j_*j^*F$ has a finite
injective resolution in $\cM.$

Consider the short exact sequence
$$0\to j_!j^*F\to j_*j^*F\to G\to 0.$$
Then $\supp(G)\subset Z$ and so by induction $G=i_*i^*G$ has a
finite injective resolution in $\cM$. Therefore the same is true for
$j_!j^*F$.
\end{proof}

\section{Derived categories of poset schemes} Let $S$ be a
poset, $\cX$ an $S$-scheme, $\cM=Qcoh\cX,$ $C(\cX)=C(\cM)$ - the
abelian category of complexes in $\cM,$ $Ho(\cX)=Ho(\cM),$
$D(\cX)=D(\cM)$ - its homotopy and derived category.

Let $U\stackrel{j}{\hookrightarrow}S\stackrel{i}{\hookleftarrow}Z$
be embeddings of an open $U$ and a complementary closed $Z$. The
exact functors $j^*,j_!,i^*,i_*, j^*_{\alpha }$ extend trivially to
corresponding functors between derived categories $D(\cM),$
$D(\cM(U)),$ $D(\cM(Z)),$ $D(X_{\alpha}).$ To define the derived
functors of the other functors we need  h-injective  and h-flat
objects in $C(\cM).$ (There are enough h-injectives by Lemma \ref{Groth})

\begin{defi} An object $F\in C(\cM)$ is called {\rm h-flat} if for any
acyclic complex $S\in C(\cM)$ the complex  $F\otimes S$ is acyclic.
\end{defi}

Notice that for any $\alpha \in S$ the functor $j_{\alpha
*}:C(X_{\alpha})\to C(\cX)$ preserves h-injectives. Indeed, its left
adjoint functor $j^*_{\alpha}$ preserves acyclic complexes. Denote
by $SI (\cX)\subset Ho(\cX)$ the full triangulated subcategory
classically generated by objects $j_{\alpha
*}M,$ for h-injective $M\in C(X_{\alpha }).$  We
call objects of $SI(\cX)$ {\it special h-injectives}. It is
sometimes convenient to use the following lemma.

\begin{lemma}\label{special-inj} There are enough special injectives in
$D(\cX).$
\end{lemma}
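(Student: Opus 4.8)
The plan is to prove the statement by induction on $|S|$, using the recollement data $(j_!,j^*,j_{\beta *})$ and $(i^*,i_*,i^!)$ attached to a maximal element. By ``enough special injectives in $D(\cX)$'' I understand that every object of $D(\cX)$ is isomorphic in $D(\cX)$ to an object of $SI(\cX)$. Since each generator $j_{\alpha *}M$ (with $M$ h-injective) is h-injective, and h-injective complexes are closed under cones, shifts and retracts, \emph{all} objects of $SI(\cX)$ are h-injective; hence any such isomorphism will automatically be represented by an honest quasi-isomorphism $F\to I$. The base case $|S|=1$ is exactly the existence of enough h-injectives in $C(X_\beta)$.

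For the inductive step I fix a maximal $\beta\in S$, set $U=U_{\beta}=\{\beta\}$, $Z=S-U$, and let $j=j_{\beta}$ and $i$ be the associated open and closed embeddings. Given $F\in C(\cM)$, I first choose an h-injective resolution $j^*F=F_{\beta}\to I_{\beta}$ in $C(X_{\beta})$ and form $a:F\to j_{\beta *}I_{\beta}$ as the composite of the adjunction unit $F\to j_{\beta *}j^*F$ with $j_{\beta *}$ applied to the resolution. Then $I':=j_{\beta *}I_{\beta}\in SI(\cX)$ by definition, and $j^*a$ is, by the triangle identity, the chosen resolution, hence a quasi-isomorphism. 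Letting $K$ be the cone of $a$, the complex $j^*K$ is acyclic, so in the canonical triangle $j_!j^*K\to K\to i_*i^*K\to [1]$ the first term vanishes in $D(\cX)$ and $K\to i_*i^*K$ is a quasi-isomorphism.

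Next I apply the induction hypothesis to $i^*K\in D(\cM(Z))$ (here $|Z|<|S|$): there is $I_Z\in SI(\cX(Z))$ and a quasi-isomorphism $i^*K\to I_Z$. A short bookkeeping check shows that for $\alpha\in Z$ and h-injective $M$ one has $i_*(j^{Z}_{\alpha *}M)=j^{S}_{\alpha *}M$, using that maximality of $\beta$ forces $\beta\not\le\alpha$ for $\alpha\in Z$, so the $\beta$-component of $j^{S}_{\alpha *}M$ vanishes; since $i_*$ is exact and triangulated it therefore carries $SI(\cX(Z))$ into $SI(\cX)$. Hence $I'':=i_*I_Z\in SI(\cX)$, and applying $i_*$ to the resolution and composing with $K\to i_*i^*K$ gives a quasi-isomorphism $K\to I''$.

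Finally I glue. We now have a distinguished triangle $F\to I'\to I''\to F[1]$ in $D(\cX)$ with $I',I''\in SI(\cX)$. Because $I'$ and $I''$ are h-injective, the morphism $I'\to I''$ in $D(\cX)$ is represented by a genuine chain map in $C(\cM)$; its cone lies in $SI(\cX)$ since $SI(\cX)$ is triangulated, and $F$ is isomorphic in $D(\cX)$ to the shift of this cone, completing the induction. I expect the main obstacle to be this gluing step: one must secure that every object of $SI(\cX)$ is h-injective, so that the derived morphism $I'\to I''$ lifts to $Ho(\cX)$, together with the compatibility $i_*j^{Z}_{\alpha *}=j^{S}_{\alpha *}$ guaranteeing that the inductively produced resolution remains special.
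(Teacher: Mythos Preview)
Your proof is correct and follows essentially the same approach as the paper's: peel off a maximal $\beta$ via the adjunction map $F\to j_{\beta*}I_\beta$, handle the cone inductively, and glue using that special h-injectives are h-injective so the resulting triangle can be realized in $Ho(\cX)$. The only cosmetic difference is that the paper chooses $\beta$ as a maximal element \emph{of the cohomological support of $F$} (so the induction is on the top index, in a fixed compatible linear order of $S$, at which $F$ is not acyclic), whereas you pick any maximal $\beta\in S$ and induct on $|S|$ by passing to $Z=S\setminus\{\beta\}$ via $i_*$; your extra bookkeeping $i_*j^{Z}_{\alpha*}=j^{S}_{\alpha*}$ is exactly what makes this variant go through.
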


\begin{proof} Fix $F\in C(\cX)$ and let $\beta \in S$ be a biggest
element such that the complex $F_{\beta}$ is not acyclic. Choose an
h-injective resolution $\rho : F_\beta \to I$ in $D(X_\beta).$ By
adjunction it induces a morphism $\sigma :F\to j_{\beta *}I.$ By
construction the cone $C_{\sigma}$ of the morphism $\sigma$ is
acyclic on $X_\gamma$ for all $\gamma \geq \beta.$ So by induction
we may assume that there exists a special h-injective $J$ and a
quasi-isomorphism $C_{\sigma}\to J.$ So $F$ is quasi-isomorphic to
the (shifted) cone of a morphism $j_{\beta *}I \to J.$
\end{proof}

It is known that for any quasi-compact separated scheme $X$ there
are enough h-flats in $D(X)$ \cite{AlJeLi}, Proposition 1.1. Clearly, an
object $F\in C(\cX)$ is h-flat if and only if $F_{\alpha}\in
C(X_{\alpha})$ is h-flat for every $\alpha \in S.$ Let $M\in
C(X_{\alpha})$ be h-flat. Then $j_{\alpha +}M\in C(\cX)$ is also
such. Indeed, the inverse image functors $f^*_{\beta \alpha}$
preserve h-flats \cite{Sp}, Proposition 5.4. Denote by $S F(\cX)\subset
Ho(\cX)$ the full triangulated subcategory classically generated by
objects $j_{\alpha +}M,$ where $M\in C(X_{\alpha})$ is h-flat. We
call objects of $SF(\cX)$ {\it special h-flats}.

\begin{lemma} There are enough special h-flats in $D(\cX).$
\end{lemma}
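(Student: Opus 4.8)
The plan is to mirror the proof of Lemma \ref{special-inj}, replacing the functor $j_{\beta *}$ attached to a \emph{maximal} element by the functor $j_{\alpha +}$ attached to a \emph{minimal} element, and to use the adjunction $(j_{\alpha +},j_{\alpha}^*)$ to produce a map \emph{into} $F$ from a special h-flat. I argue by induction on $|S|$. When $|S|=1$ the statement is the existence of enough h-flats on a separated quasi-compact scheme \cite{AlJeLi}. For the inductive step I fix $F\in C(\cX)$ and a minimal element $\alpha\in S$; then $Z_\alpha=\{\alpha\}$ is closed, so $U=S\setminus\{\alpha\}$ is open, with inclusion $j:U\hookrightarrow S$.

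First I would choose an h-flat resolution $q\colon P_\alpha\to F_\alpha$ in $C(X_\alpha)$ and let $g\colon j_{\alpha +}P_\alpha\to F$ be the adjoint of $q$ under $(j_{\alpha +},j_{\alpha}^*)$. Since $f_{\alpha\alpha}=\id$, one has $(j_{\alpha +}P_\alpha)_\alpha=P_\alpha$ and $g_\alpha=q$, so $g$ is a quasi-isomorphism on the $\alpha$-component; moreover $j_{\alpha +}P_\alpha\in SF(\cX)$ because $P_\alpha$ is h-flat. Let $C=\operatorname{Cone}(g)\in C(\cX)$, so that $C_\alpha$ is acyclic. Applying the inductive hypothesis to $j^*C\in C(\cX(U))$ yields $Q'\in SF(\cX(U))$ and a quasi-isomorphism $r\colon Q'\to j^*C$. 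The next point is that $j_!$ carries $SF(\cX(U))$ into $SF(\cX)$: since $\alpha$ is minimal, $\alpha\not\geq\gamma$ for every $\gamma\in U$, whence $(j_{\gamma +}M)_\alpha=0$, so $j_!$ sends the generator of $SF(\cX(U))$ attached to $\gamma\in U$ and an h-flat $M$ to $j_{\gamma +}M\in SF(\cX)$; as $j_!$ is exact and preserves coproducts, it preserves the whole classically generated subcategory. Composing $j_!r$ with the counit $\epsilon\colon j_!j^*C\to C$ (the inclusion of the subcomplex supported on $U$) gives a chain map $s\colon j_!Q'\to C$ which equals $r_\beta$ on each $\beta\in U$ and is $0\to(\text{acyclic})$ at $\alpha$, hence a quasi-isomorphism, with $j_!Q'\in SF(\cX)$.

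It remains to assemble a special h-flat isomorphic to $F$ from $j_{\alpha +}P_\alpha$ and $j_!Q'$. The delicate point, which I expect to be the main obstacle, is that one must realize the connecting morphism by an \emph{honest} chain map between special h-flats: h-flat objects need not be h-projective, so $\Hom$ in $D(\cX)$ out of an h-flat object need not agree with $\Hom$ in $Ho(\cX)$, and the naive cone-of-a-derived-map argument from Lemma \ref{special-inj} is not directly available. I avoid this by reading the connecting map off the mapping cone: writing $\operatorname{Cone}(g)=F\oplus j_{\alpha +}P_\alpha[1]$ and composing $s$ with the projection to $j_{\alpha +}P_\alpha[1]$ gives a genuine chain map $t\colon j_!Q'\to j_{\alpha +}P_\alpha[1]$. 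I then set $P=\operatorname{Cone}\bigl(t[-1]\colon j_!Q'[-1]\to j_{\alpha +}P_\alpha\bigr)$, which lies in $SF(\cX)$ as the cone of a chain map between special h-flats. Rotating the triangle $j_{\alpha +}P_\alpha\xrightarrow{g}F\to C\xrightarrow{w}j_{\alpha +}P_\alpha[1]$ and using the isomorphism $s$ to replace $C$ by $j_!Q'$ (so that $w$ becomes $t$) identifies $F$ with $P$ in $D(\cX)$. Finally, since every generator $j_{\gamma +}(\text{h-flat})$ is h-flat and the h-flat objects are closed under shifts, cones, coproducts and summands, every object of $SF(\cX)$ is h-flat; thus $P$ is an honest special h-flat isomorphic to $F$ in $D(\cX)$, which is what is claimed.
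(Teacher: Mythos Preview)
Your proof is correct and follows essentially the same strategy as the paper, which simply says to dualize Lemma~\ref{special-inj} using the adjoint pair $(j_{\alpha+},j_\alpha^*)$ in place of $(j_\alpha^*,j_{\alpha*})$.

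Two small remarks. First, you induct on $|S|$ by restricting to the open subposet $U=S\setminus\{\alpha\}$ for a minimal $\alpha\in S$; the paper's argument (read off from Lemma~\ref{special-inj}) instead picks a \emph{smallest} element among those where $F$ is not acyclic and inducts within $C(\cX)$ on the size of the locus not yet rendered acyclic. Both work. Second, your concern that ``the naive cone-of-a-derived-map argument from Lemma~\ref{special-inj} is not directly available'' is unfounded: in Lemma~\ref{special-inj} the map $j_{\beta*}I\to J$ is already the composite of two honest chain maps (the canonical map $j_{\beta*}I\to C_\sigma$ into the cone, followed by the inductive quasi-isomorphism $C_\sigma\to J$), so no lifting of a derived map is ever needed there. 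Dually, one composes the inductive chain quasi-isomorphism $Q\to C$ with the canonical projection $C\to j_{\alpha+}P_\alpha[1]$ --- and that composite is exactly your map $t$. So your careful extraction of the connecting map from the mapping cone is precisely what the paper's sketch, once dualized, amounts to.
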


\begin{proof} Similar to the proof of Lemma \ref{special-inj} but using the
adjoint pair $(j_{\alpha +},j^*_{\alpha})$ instead of $(j_{\alpha
}^*,j_{\alpha *}).$
\end{proof}

We now use h-injectives to define the right derived functors
$$\bR j_{\alpha
*}:D(X_\alpha)\to D(\cX), \ \ \ \bR i^!:D(\cX)\to D(\cX(Z)),$$
and h-flats to define the left derived functor
$$\bL j_{\alpha +}:D(X_{\alpha})\to D(\cX)$$
and the derived functor $(-)\stackrel{\bL}{\otimes }(-):D(\cX)\times
D(\cX)\to D(\cX)$ (by resolving any of the two variables).

\subsection{Summary of  functors and their properties} \

\noindent{\underline{Preserve h-flats and h-injectives}:} The
functors $j^*,j_!,i^*,i_*, j^*_{\alpha },j_{\alpha +}$ between the
categories $C(\cX),$ $C(\cX(U)),$ $C(\cX(Z)),$ $C(X_{\alpha})$
preserve h-flats. Also the functors $j^*,i_*,i^!,j_{\alpha *}$
preserve h-injective, since their left adjoint functors preserve
acyclic complexes.

\noindent{\underline{Derived functors}:}  We have defined the
following triangulated functors between the derived categories
$D(\cX),$ $D(\cX(U)),$ $D(\cX(Z)),$ $D(X_{\alpha})$:
 $j^*,j_!,i^*,i_*, \bR
i^!,j^*_{\alpha },\bL j_{\alpha +}, \bR j_{\alpha *}.$

\noindent{\underline{Preserve direct sums}:} All the above functors
except possibly $\bR i^!$ ($\bR j_{\alpha *}$ preserves direct sums
since the morphisms $f_{\alpha \beta}$ are quasi-compact and
separated [BoVdB],Cor.3.3.4).

\noindent{\underline{Adjunction}:} $(j_!,j^*), (i^*, i_*), (i_*,\bR
i^!), (j^*_{\alpha}, \bR j_{\alpha
*}), (\bL j_{\alpha +}, j^*_{\alpha}), $ are adjoint pairs. This
follows (except for the last pair) from the adjunctions in
Subsection \ref{summary-1} above and the fact that the functors
$j^*,i_*,i^!,j_{\alpha *}$ preserve h-injectives. For the last pair
we need a lemma.

\begin{lemma} \label{adjunction} $(\bL
j_{\alpha +}, j^*_{\alpha})$ is an adjoint pair.
\end{lemma}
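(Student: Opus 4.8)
The plan is to establish the adjunction isomorphism
$$\Hom_{D(\cX)}(\bL j_{\alpha +}P,\,M)\;\cong\;\Hom_{D(X_\alpha)}(P,\,j^*_\alpha M),$$
natural in $P\in D(X_\alpha)$ and $M\in D(\cX)$, by reducing it to the underived adjoint pair $(j_{\alpha +},j^*_\alpha)$ recorded in Subsection \ref{summary-1}, after choosing the right resolutions on each side. First I would pick an h-flat resolution $P'\xrightarrow{\sim}P$ (these exist componentwise), so that $\bL j_{\alpha +}P$ is represented by the honest complex $j_{\alpha +}P'$, and a \emph{special} h-injective resolution $M\xrightarrow{\sim}I$, available by Lemma \ref{special-inj}. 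Since $I$ is h-injective, $\Hom_{D(\cX)}(j_{\alpha +}P',I)=\Hom_{Ho(\cX)}(j_{\alpha +}P',I)$, and because the pair $(j_{\alpha +},j^*_\alpha)$ is an adjunction of additive functors it extends degreewise to complexes and descends to the homotopy categories, giving $\Hom_{Ho(\cX)}(j_{\alpha +}P',I)\cong\Hom_{Ho(X_\alpha)}(P',j^*_\alpha I)$.

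The crucial point, which I expect to carry the whole argument, is that $j^*_\alpha I$ is again h-injective in $C(X_\alpha)$ — note that $j^*_\alpha$ does \emph{not} preserve arbitrary h-injectives, since its left adjoint $j_{\alpha +}$ is only right-exact, so the specialness of $I$ is essential here. It suffices to check that $j^*_\alpha$ sends the generators of $SI(\cX)$ to h-injective complexes: for h-injective $N\in C(X_\beta)$ the definition of $j_{\beta *}$ gives
$$j^*_\alpha(j_{\beta *}N)=(j_{\beta *}N)_\alpha=\begin{cases} f_{\beta\alpha *}N, & \alpha\le\beta,\\ 0, & \text{otherwise},\end{cases}$$
and $f_{\beta\alpha *}N$ is h-injective because $f^*_{\beta\alpha}$ is exact, hence $f_{\beta\alpha *}$ preserves h-injectives. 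Since the h-injective complexes form a thick subcategory of $Ho(X_\alpha)$ and $j^*_\alpha:Ho(\cX)\to Ho(X_\alpha)$ is triangulated, the preimage under $j^*_\alpha$ of that thick subcategory is itself thick and contains every $j_{\beta *}N$; therefore it contains all of $SI(\cX)$, and in particular $j^*_\alpha I$ is h-injective.

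With $j^*_\alpha I$ h-injective, I would finish by reading the chain back out: $\Hom_{Ho(X_\alpha)}(P',j^*_\alpha I)=\Hom_{D(X_\alpha)}(P',j^*_\alpha I)$, and since $P'\xrightarrow{\sim}P$ while the exact functor $j^*_\alpha$ turns $M\xrightarrow{\sim}I$ into a quasi-isomorphism $j^*_\alpha M\xrightarrow{\sim}j^*_\alpha I$, this group is canonically $\Hom_{D(X_\alpha)}(P,j^*_\alpha M)$. Concatenating the identifications yields the asserted isomorphism. The only remaining task is to verify that all the maps are natural in $P$ and $M$ and independent of the chosen resolutions, which is routine given the functoriality of h-flat and special h-injective resolutions together with the naturality of the underived adjunction. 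I do not anticipate any genuine difficulty beyond the h-injectivity of $j^*_\alpha I$: that is the single place where the combinatorial structure of the poset scheme — the explicit form of $j_{\beta *}$ and the exactness of the pullbacks $f^*_{\beta\alpha}$ — actually enters.
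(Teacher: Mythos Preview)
Your overall plan --- take $P$ h-flat, take $I$ special h-injective, and reduce via the underived adjunction $(j_{\alpha+},j^*_\alpha)$ --- is exactly the paper's strategy. But the step you flag as ``the single place where the combinatorial structure actually enters'' contains a genuine error: you assert that $f_{\beta\alpha*}N$ is h-injective because $f^*_{\beta\alpha}$ is exact. The functor $f^*_{\beta\alpha}:Qcoh(X_\alpha)\to Qcoh(X_\beta)$ is pullback of quasi-coherent sheaves along a morphism of schemes, and this is only \emph{right} exact in general; it is exact only when $f_{\beta\alpha}$ is flat, which is nowhere assumed. Consequently $f_{\beta\alpha*}$ need not preserve h-injectives, and your claim that $j^*_\alpha I$ is h-injective for special $I$ does not follow. (Do not confuse $f^*_{\beta\alpha}$ with the restriction $j^*_\alpha:\cM\to\cM_\alpha$, which \emph{is} exact --- that exactness is trivial and carries no information about the scheme-theoretic pullbacks.)

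The paper confronts exactly this difficulty and does \emph{not} claim $j^*_\alpha I$ is h-injective. Instead, after reducing to $I=j_{\beta*}K$ with $K$ h-injective so that $j^*_\alpha I=f_{\beta\alpha*}K$, it invokes Corollary~\ref{maps-to-h-inj} from the Appendix: for $M$ h-flat and $K$ h-injective one has $\Hom(M,f_*K)=\bR\Hom(M,f_*K)$ even though $f_*K$ may fail to be h-injective. That corollary is a nontrivial statement about the coherator and the comparison between $D(Qcoh)$ and $D_{\qc}(\Mod)$; it is precisely the missing ingredient your argument needs. Once you have it, your chain of identifications goes through verbatim with $\bR\Hom$ in place of $\Hom$, and the proof is complete.
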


\begin{proof} Choose $M\in D(X_{\alpha})$ and $I\in D(\cX).$ We need
to show that $\bR \Hom (\bL j_{\alpha +}M,I)=\bR \Hom
(M,j^*_{\alpha}I).$ We may assume that $M$ is h-flat and $I$ is a
special h-injective (Lemma \ref{special-inj}). Moreover, we then may
assume that $I=j_{\beta *}K,\beta \leq \alpha$ where $K\in
C(X_{\beta})$ is h-injective. Then $j^*_{\alpha}I=f_{\beta \alpha
*}K$  and so
$$\Hom (M, j^*_{\alpha}I)=\bR \Hom (M,j^*_{\alpha}I)$$
by Corollary \ref{maps-to-h-inj} in Appendix. Therefore
$$\begin{array}{rcl} \bR \Hom (\bL j_{\alpha +}M,I) & = & \Hom (\bL j_{\alpha +}M,I) \\
& = & \Hom (j_{\alpha +}M,I)\\
& = & \Hom (M,j^*_{\alpha}I)\\
& = & \bR \Hom (M,j^*_{\alpha}I).
\end{array}
$$
\end{proof}

\begin{defi} For $F\in D(\cX)$ we define the cohomology
$$H^i(\cX, F):=\bR ^i\Hom (\cO _{\cX},F).$$
\end{defi}

\subsection{Semi-orthogonal decompositions}

Recall that functors $j_!$ and $i_*$ identify categories $\cM(U)$
and $\cM(Z)$ with $\cM_U$ and $\cM_Z$ respectively. Denote by
$D_U(\cM)$ and $D_Z(\cM)$ the full subcategories of $D(\cM)$
consisting of complexes with cohomologies in $\cM_U$ and $\cM_Z$
respectively.

\begin{lemma}\label{full-faithful} The functors $i_*:D(\cM(Z))\to D(\cM)$
and $j_!:D(\cM(U))\to D(\cM)$ are fully faithful. The essential
images of these functors are the full subcategories $D_Z(\cM)$  and
$D_U(\cM)$ respectively.
\end{lemma}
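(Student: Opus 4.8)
The plan is to deduce everything from the two abelian identities $j^*j_!=\Id$ and $i^*i_*=\Id$ recorded in Subsection~\ref{operations}, together with the exactness of the four functors $j_!,j^*,i_*,i^*$. First I would observe that, being exact, each of these functors passes to a triangulated functor on derived categories by termwise application (it carries quasi-isomorphisms to quasi-isomorphisms and commutes with $H^n$); moreover the abelian adjunctions $(j_!,j^*)$ and $(i^*,i_*)$ descend to adjunctions of the induced functors on $D(-)$, and the identities $j^*j_!=\Id$ and $i^*i_*=\Id$ persist at the derived level.

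For fully faithfulness I would invoke the standard criterion that a left adjoint is fully faithful precisely when the unit of the adjunction is an isomorphism, and a right adjoint precisely when the counit is. For the pair $(j_!,j^*)$ the unit $\Id\to j^*j_!$ is the identity by $j^*j_!=\Id$, so the left adjoint $j_!$ is fully faithful. For the pair $(i^*,i_*)$ the counit $i^*i_*\to\Id$ is the identity by $i^*i_*=\Id$, so the right adjoint $i_*$ is fully faithful.

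For the essential images one inclusion is immediate: since $j_!$ is exact, $H^n(j_!C)=j_!H^n(C)\in\cM_U$, so the essential image of $j_!$ lies in $D_U(\cM)$, and symmetrically the essential image of $i_*$ lies in $D_Z(\cM)$. For the reverse inclusions I would use the counit $j_!j^*\to\Id$ and the unit $\Id\to i_*i^*$, which on the abelian level are exactly the first and third maps of the canonical short exact sequence $0\to j_!j^*F\to F\to i_*i^*F\to 0$ from Subsection~\ref{operations}. For $F\in\cM_U$ the term $i_*i^*F$ vanishes, so the counit is an isomorphism there; for $F\in\cM_Z$ the term $j_!j^*F$ vanishes, so the unit is an isomorphism there. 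Given $C\in D_U(\cM)$, applying this cohomology by cohomology (all functors in sight are exact and hence commute with $H^n$) shows the counit $j_!j^*C\to C$ is a quasi-isomorphism, whence $C\cong j_!(j^*C)$ lies in the essential image; the argument placing $D_Z(\cM)$ in the essential image of $i_*$ via the unit $C\to i_*i^*C$ is the mirror image.

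I do not anticipate a serious obstacle here: the only point demanding genuine care is confirming that the abelian adjunctions and the identities $j^*j_!=\Id$, $i^*i_*=\Id$ really do descend to the derived categories, and this is guaranteed by the exactness of all four functors, which trivializes their derived functors.
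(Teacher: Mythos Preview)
Your proof is correct. The essential-image half is argued exactly as in the paper, via the adjunction maps $j_!j^*C\to C$ and $C\to i_*i^*C$ being quasi-isomorphisms on $D_U(\cM)$ and $D_Z(\cM)$ respectively.

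For full faithfulness the paper takes a slightly more concrete route than you do: rather than invoking the unit/counit criterion, it computes $\bR\Hom$ directly using h-injective resolutions. For $i_*$ it chooses $G$ h-injective in $D(\cM(Z))$, notes that $i_*G$ is then h-injective in $D(\cM)$, and writes
\[
\bR\Hom(i_*F,i_*G)=\Hom(i_*F,i_*G)=\Hom(i^*i_*F,G)=\bR\Hom(F,G).
\]
For $j_!$ it resolves $j_!G$ by an h-injective $I$ in $D(\cM)$, notes that $j^*I$ is h-injective and quasi-isomorphic to $G$, and writes
\[
\bR\Hom(j_!F,j_!G)=\Hom(j_!F,I)=\Hom(F,j^*I)=\bR\Hom(F,G).
\]
Your argument is a bit cleaner in that it avoids h-injectives altogether, relying only on the exactness of all four functors to pass the abelian adjunctions and the identities $j^*j_!=\Id$, $i^*i_*=\Id$ directly to the derived level; the paper's computation is more explicit but uses the same identities at its core.
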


\begin{proof} Given $F \in D_Z(\cM)$ (resp. $F \in
D_U(\cM)$) the adjunction map $F \to i_*i^*F$ (resp. $j_!j^*F \to
F$) is a quasiisomorphism. This shows that the functors
$i_*:D(\cM(Z))\to D_Z(\cM)$ and $j_!:D(\cM(U))\to D_U(\cM)$ are
essentially surjective. Let us prove that they are fully faithful.

Let $F,G\in D(\cM(Z))$ and assume that $G$ is h-injective. Then
$i_*G$ is also h-injective and we have
$$\bR \Hom(i_*F,i_*G) = \Hom(i_*F,i_*G) = \Hom(i^*i_*F,G) = \bR \Hom (F,G).$$

Similarly, let $F,G\in D(\cM(U))$ and choose a quasi-isomorphism
$j_!G\to I,$ where $I$ is h-injective. Then $j^*I$ is also
h-injective and quasi-isomorphic to $G.$ We have
$$\bR \Hom(j_!F,j_!G) = \Hom (j_!F,I) =\Hom(F,j^*I) = \bR \Hom(F,G).$$
\end{proof}

We immediately obtain the following corollary

\begin{cor}\label{cor-full-faithful}
The categories $D(\cM(U))$ and $D(\cM(Z))$ are naturally equivalent
to $D_U(\cM)$ and $D_Z(\cM)$ respectively.
\end{cor}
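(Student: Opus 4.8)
The plan is to deduce the corollary as a purely formal consequence of Lemma~\ref{full-faithful}, by invoking the standard categorical principle that a fully faithful functor restricts to an equivalence between its source and the full subcategory of its target spanned by its essential image.

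First I would record that general fact: if $F:\cC\to\cD$ is a fully faithful functor, and $\cD'\subset\cD$ is the full subcategory on those objects isomorphic to some $F(c)$, then the corestriction $F:\cC\to\cD'$ is an equivalence. Indeed, full faithfulness gives bijectivity on Hom-sets, while essential surjectivity onto $\cD'$ holds by the very definition of $\cD'$; these two properties together characterize an equivalence of categories.

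Next I would apply this to the two functors supplied by Lemma~\ref{full-faithful}. That lemma asserts that $i_*:D(\cM(Z))\to D(\cM)$ is fully faithful with essential image exactly $D_Z(\cM)$, and likewise that $j_!:D(\cM(U))\to D(\cM)$ is fully faithful with essential image exactly $D_U(\cM)$. Feeding these statements into the general fact yields the desired equivalences $D(\cM(Z))\simeq D_Z(\cM)$ and $D(\cM(U))\simeq D_U(\cM)$, realized concretely by $i_*$ and $j_!$.

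There is no genuine obstacle here: the substantive work has already been carried out in Lemma~\ref{full-faithful}, and what remains is only the formal packaging. The one point worth a remark is naturality, namely that the equivalences are induced by the honest functors $i_*$ and $j_!$ rather than by some ad hoc construction; this is automatic, since these are precisely the functors the lemma shows to be fully faithful, so the equivalences are nothing but their corestrictions and are as natural as the functors themselves.
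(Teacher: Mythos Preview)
Your proposal is correct and matches the paper's approach exactly: the paper simply states ``We immediately obtain the following corollary'' after Lemma~\ref{full-faithful}, and your argument just spells out the standard fact that a fully faithful functor is an equivalence onto its essential image.
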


\begin{cor}
 Fix
$\alpha \in S$. Let $i:\{\alpha \}\hookrightarrow U_{\alpha }$ and
$j:U_{\alpha }\hookrightarrow S$ be the closed and the open
embeddings respectively. Then the functor
$$j_!\cdot i_*:D(X_{\alpha})\to D(\cM)$$
is fully faithful. In particular, the derived category $D(X_\alpha)$
is naturally (equivalent to) a full subcategory of $D(\cM)$.
\end{cor}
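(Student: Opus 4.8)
The plan is to recognize that the functor $j_!\cdot i_*$ is nothing but a composition of two fully faithful functors, each of which is already supplied by Lemma \ref{full-faithful}, and then to invoke the elementary fact that a composition of fully faithful functors is again fully faithful. So the entire argument reduces to checking that the two embeddings $i$ and $j$ fall within the scope of the open/closed formalism of Subsection \ref{operations} and that Lemma \ref{full-faithful} may be applied twice.

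First I would treat the open embedding. By definition $U_{\alpha}=\{\beta\in S\mid \beta\geq\alpha\}$ is a basic open set of $S$, so $j:U_{\alpha}\hookrightarrow S$ is an open embedding and Lemma \ref{full-faithful} applies verbatim: the functor $j_!:D(\cM(U_{\alpha}))\to D(\cM)$ is fully faithful (with essential image $D_{U_{\alpha}}(\cM)$). Next I would regard $U_{\alpha}$ itself as a poset with the induced order, so that $\cX(U_{\alpha})$ is again a poset scheme. Here $\alpha$ is the smallest element, whence $\{\gamma\in U_{\alpha}\mid \gamma\leq\alpha\}=\{\alpha\}$; thus $\{\alpha\}$ is a closed subset of $U_{\alpha}$ and $i:\{\alpha\}\hookrightarrow U_{\alpha}$ is a closed embedding. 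Applying Lemma \ref{full-faithful} to the poset scheme $\cX(U_{\alpha})$ with this closed subset shows that $i_*:D(\cM_{\alpha})\to D(\cM(U_{\alpha}))$ is fully faithful, where $\cM_{\alpha}=QcohX_{\alpha}$ is the category attached to the one-point restriction, so its derived category is exactly $D(X_{\alpha})$.

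With both factors identified as fully faithful, I would conclude by composing: for $F,G\in D(X_{\alpha})$,
$$\bR\Hom(j_!i_*F,\,j_!i_*G)=\bR\Hom(i_*F,\,i_*G)=\bR\Hom(F,\,G),$$
where the first equality uses full faithfulness of $j_!$ and the second that of $i_*$. This yields the full faithfulness of $j_!\cdot i_*$ and in particular realizes $D(X_{\alpha})$ as a full triangulated subcategory of $D(\cM)$. The proof is essentially formal, so there is no real obstacle; the only points demanding care are verifying that $\{\alpha\}$ genuinely is closed inside $U_{\alpha}$ (which holds precisely because $\alpha$ is the minimum of $U_{\alpha}$) and that Lemma \ref{full-faithful}, stated for an arbitrary poset scheme, is legitimately applied with the restricted poset scheme $\cX(U_{\alpha})$ in place of $\cX$.
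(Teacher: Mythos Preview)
Your proof is correct and follows exactly the paper's own argument: apply Lemma \ref{full-faithful} once to the closed embedding $i:\{\alpha\}\hookrightarrow U_{\alpha}$ (using that $\alpha$ is the minimum of $U_{\alpha}$) and once to the open embedding $j:U_{\alpha}\hookrightarrow S$, then compose the two fully faithful functors. The only difference is that you spell out the verification that $\{\alpha\}$ is closed in $U_{\alpha}$ and that the lemma applies to the restricted poset scheme $\cX(U_{\alpha})$, which the paper leaves implicit.
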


\begin{proof}
Indeed, by Lemma \ref{full-faithful} above the functors
$$i_*:D(X_\alpha )\to D(\cM (U_{\alpha }))$$
and
$$j_!:D(\cM (U_{\alpha }))\to D(\cM )$$
are fully faithful. So is their composition.
\end{proof}

Recall the following definitions from \cite{BoKa}.

\begin{defi}
Let $\cA$ be a triangulated category, $\cB\subset \cA$ -- a full
triangulated subcategory. A  right orthogonal to $\cB$ in $\cA$ is a full
subcategory $\cB^{\perp}\subset \cA$  consisting of all objects $C$
such that $\Hom(B,C[n])=0$ for all $n\in \bbZ$ and all $B\in \cB$.
\end{defi}

\begin{defi}
Let $\cA$ be a triangulated category, $\cB \subset \cA$ -- a full
triangulated subcategory. We say that $\cB$ is {\it
right-admissible} if for each $X\in \cA$ there exists an exact
triangle $B\to X\to C$ with $B\in \cB$, $C\in \cB^{\perp}$.
\end{defi}

Similarly one defines the left orthogonal to a full subcategory and
left admissible subcategories.

\begin{defi} Let $\cA$ be a triangulated category, $\cB , \cC \subset \cA$
-- two full triangulated subcategories. We say that $\cA$ has the
semi-orthogonal decomposition $\cA =\langle \cC ,\cB \rangle$ if
$\cC =\cB ^\perp$ and $\cB$ is right-admissible. More generally
given full triangulated subcategories $\cA _1,...,\cA _n\subset \cA$
we say that $\cA$ has the semi-orthogonal decomposition $\cA
=\langle \cA _1,\cA _2,...,\cA _n\rangle$ if

1) $\cA _1$ is right-admissible;

2) the right orthogonal $\cA _1^{\perp}$ is the category $\cD$ which
is the triangulated envelop of the categories $\cA _2,...,\cA _n;$

3) there is a semi-orthogonal decomposition
$\cD =\langle \cA _2,...,\cA _n\rangle.$
\end{defi}

\begin{lemma}\label{semi-orth}
Consider the full subcategories $D_U(\cM)$ and $D_Z(\cM)$  of
$D(\cM).$ Then

i)  $D_U(\cM)^{\perp}=D_Z(\cM)$,

ii) the subcategory $D_U(\cM)\subset D(\cM)$ is right-admissible.

\end{lemma}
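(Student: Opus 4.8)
The plan is to treat Lemma \ref{semi-orth} as an instance of the standard recollement pattern, the main input being the functorial short exact sequence $0\to j_!j^*F\to F\to i_*i^*F\to 0$ constructed in Subsection \ref{operations}, together with the identities $j^*j_!=\Id$ and $j^*i_*=0$. Since $j_!,j^*,i_*,i^*$ are all exact, this sequence is computed termwise on complexes and therefore descends to a distinguished triangle, for every $X\in D(\cM)$,
$$j_!j^*X\to X\to i_*i^*X\to (j_!j^*X)[1],$$
whose first map is the adjunction counit. By Lemma \ref{full-faithful} its outer vertices lie in $D_U(\cM)$ and $D_Z(\cM)$ respectively. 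This single triangle will drive both parts of the argument.

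For the easy inclusion $D_Z(\cM)\subset D_U(\cM)^{\perp}$ in part i), I would write an arbitrary object of $D_U(\cM)$ as $j_!A$ and an arbitrary object of $D_Z(\cM)$ as $i_*B$ (again Lemma \ref{full-faithful}). Using that $(j_!,j^*)$ descends to an adjoint pair of exact functors on the derived categories, one computes
$$\bR\Hom(j_!A,i_*B)=\bR\Hom(A,j^*i_*B)=\bR\Hom(A,0)=0,$$
because $j^*i_*=0$ on the nose. In particular all the shifted Hom-groups vanish, so $i_*B\in D_U(\cM)^{\perp}$.

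The reverse inclusion $D_U(\cM)^{\perp}\subset D_Z(\cM)$ is the heart of the matter. Given $C\in D_U(\cM)^{\perp}$, I apply the triangle above to $X=C$. The first map $\phi\colon j_!j^*C\to C$ is a morphism from an object of $D_U(\cM)$ into $C\in D_U(\cM)^{\perp}$, hence $\phi=0$. A triangle whose first map vanishes splits, so $(j_!j^*C)[1]$ is a direct summand of $C$; being a summand of an object of $D_U(\cM)^{\perp}$ it lies in $D_U(\cM)^{\perp}$, while it also lies in $D_U(\cM)$, and $D_U(\cM)\cap D_U(\cM)^{\perp}=0$ (any object there has vanishing identity morphism). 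Thus $j_!j^*C=0$, whence $j^*C=j^*j_!j^*C=0$ using $j^*j_!=\Id$. Since $j^*C=0$ means $C_\alpha$ is acyclic for every $\alpha\in U$, the cohomology sheaves of $C$ are supported on $Z$, i.e. $C\in D_Z(\cM)$; this completes part i). Part ii) is then immediate: the triangle $j_!j^*X\to X\to i_*i^*X\to (j_!j^*X)[1]$ exhibits, for each $X$, the required factorization with $j_!j^*X\in D_U(\cM)$ and $i_*i^*X\in D_Z(\cM)=D_U(\cM)^{\perp}$.

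The only genuinely delicate step is the reverse inclusion in i), specifically the passage from $\phi=0$ to $j^*C=0$; everything else is formal adjunction bookkeeping resting on the support identities $j^*j_!=\Id$ and $j^*i_*=0$ and on the full faithfulness already established in Lemma \ref{full-faithful}.
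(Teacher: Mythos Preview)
Your overall strategy is sound and close to the paper's, but there is a slip in the reverse inclusion of part i). From the triangle $j_!j^*C \stackrel{\phi}{\to} C \to i_*i^*C$ with $\phi=0$, the splitting yields $i_*i^*C \simeq C \oplus (j_!j^*C)[1]$, so $(j_!j^*C)[1]$ is a direct summand of $i_*i^*C$, \emph{not} of $C$ as you wrote. The argument is easily repaired: since $i_*i^*C \in D_Z(\cM) \subset D_U(\cM)^\perp$ by the easy inclusion you already proved, and $D_U(\cM)^\perp$ is closed under direct summands, you still conclude $(j_!j^*C)[1] \in D_U(\cM) \cap D_U(\cM)^\perp = 0$, hence $j^*C = 0$.

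That said, the paper dispatches i) with a single adjunction computation that handles both inclusions simultaneously: $G \in D_U(\cM)^\perp = (j_! D(\cM(U)))^\perp$ iff $\bR\Hom(j_!A, G) = 0$ for all $A\in D(\cM(U))$, iff $\bR\Hom(A, j^*G) = 0$ for all $A$, iff $j^*G$ is acyclic, iff $G \in D_Z(\cM)$. Your splitting-of-a-zero-map argument, while fixable, is more elaborate than this direct route; the step you flagged as ``genuinely delicate'' is in fact unnecessary. For part ii) your proof and the paper's coincide: both invoke the triangle $j_!j^*X\to X\to i_*i^*X$.
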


\begin{proof}
i). Let $G\in D(\cM)$. Then $G\in D_U(\cM)^{\perp}\simeq (j_!D(\cM
(U)))^\perp $ iff
 $Gj^*$ is acyclic, i.e. $G\in D_Z(\cM)$.

ii). Given $X \in D(\cM)$ the required exact triangle is $X _U\to
X\to X _Z$.
\end{proof}

\begin{cor}\label{final-semi-orth} a) In the notation of Lemma
\ref{semi-orth} we have the semi-orthogonal decomposition
$D(\cM)=(D_Z(\cM),D_U(\cM)).$

b) Choose a linear ordering $\alpha _1,...,\alpha _n$ of elements of
$S$ which is compatible with the given partial order. Using
Corollary \ref{cor-full-faithful} identify each category
$D(X_{\alpha _i})$ as a full subcategory of $D(\cM)=D(\cX).$ Then
there is the semi-orthogonal decomposition
$$D(\cX)=\langle D(X_{\alpha _1}),...,D(X_{\alpha _n})\rangle.$$
\end{cor}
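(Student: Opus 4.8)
The plan is to read part a) straight off Lemma \ref{semi-orth}, and then to get part b) by induction on $|S|$, peeling off one maximal (equivalently, topologically open) point at a time, exactly as in the proofs of Proposition \ref{finite-inj-res} and Lemma \ref{special-inj}.

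For a), the definition of a two-term semi-orthogonal decomposition $\langle \cC,\cB\rangle$ asks only for $\cC=\cB^{\perp}$ together with right-admissibility of $\cB$. Taking $\cB=D_U(\cM)$ and $\cC=D_Z(\cM)$, part (i) of Lemma \ref{semi-orth} gives $D_Z(\cM)=D_U(\cM)^{\perp}$ and part (ii) gives right-admissibility of $D_U(\cM)$. Hence $D(\cM)=(D_Z(\cM),D_U(\cM))$, which is the assertion; there is nothing further to do here.

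For b), I would induct on $n=|S|$, the case $n=1$ being the tautology $D(\cX)=\langle D(X_{\alpha_1})\rangle$. For the inductive step I choose $\alpha_1$ to be a maximal element of $S$ and set $U=U_{\alpha_1}=\{\alpha_1\}$ (open, since $\alpha_1$ is maximal) and $Z=S\setminus\{\alpha_1\}$ (closed); this is the choice that makes the first summand right-admissible, matching the convention in the definition of $\langle\cA_1,\dots,\cA_n\rangle$, and it exhibits $\alpha_1,\dots,\alpha_n$ as the linear extension obtained by successively deleting maximal elements. By Corollary \ref{cor-full-faithful} one has the equivalences $D_U(\cM)\simeq D(\cX(U))=D(X_{\alpha_1})$ and $D_Z(\cM)\simeq D(\cX(Z))$, and by Lemma \ref{semi-orth} the subcategory $D(X_{\alpha_1})$ is right-admissible with right orthogonal $D_U(\cM)^{\perp}=D_Z(\cM)\simeq D(\cX(Z))$. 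This already yields the two-term decomposition
\[
D(\cX)=\langle D(X_{\alpha_1}),\, D(\cX(Z))\rangle .
\]
Since $|Z|=n-1$, the inductive hypothesis gives $D(\cX(Z))=\langle D(X_{\alpha_2}),\dots,D(X_{\alpha_n})\rangle$, and feeding this into clause 3) of the definition produces the desired $D(\cX)=\langle D(X_{\alpha_1}),\dots,D(X_{\alpha_n})\rangle$: clause 1) is the right-admissibility of $D(X_{\alpha_1})$ just verified, and clause 2) is the identification of $D(X_{\alpha_1})^{\perp}$ with the triangulated envelope of the remaining summands, which follows from Lemma \ref{semi-orth}(i) together with the inductive decomposition.

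The one point that I expect to require genuine care is the compatibility of the two ways of embedding $D(X_{\alpha_i})$ (for $i\geq 2$) into $D(\cX)$: once directly via the functor $j_!\cdot i_*$ of the preceding corollary, and once indirectly through the equivalence $D(\cX(Z))\simeq D_Z(\cM)\hookrightarrow D(\cX)$. One must confirm that these two full embeddings coincide, so that the inductively produced decomposition of $D(\cX(Z))$ is literally a decomposition of the subcategory $D_Z(\cM)\subset D(\cX)$ with its summands embedded as in the statement. Concretely this reduces to the transitivity of the exact ``extension by zero'' functors of Subsection \ref{operations}: extending an object from $\{\alpha_i\}$ to $Z$ and then from $Z$ to $S$ yields the same object (supported only at $\alpha_i$, with the prescribed value) as extending directly from $\{\alpha_i\}$ to $S$. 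Because all functors involved are exact, there is no derived subtlety and the two embeddings agree on the nose; this is routine but is the essential bookkeeping that makes the induction rigorous. Everything else is formal from Lemma \ref{semi-orth} and the definition of a semi-orthogonal decomposition.
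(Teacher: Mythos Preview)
Your proof is correct and follows the same approach as the paper: part a) is read directly off Lemma \ref{semi-orth} and the definitions, and part b) is obtained from a) by induction on $|S|$, peeling off one element at a time. The paper's own proof is a one-line ``follows from a) by induction on the cardinality of the poset $S$,'' and you have simply filled in the details of that induction, including the (routine but worthwhile) verification that the two ways of embedding $D(X_{\alpha_i})$ into $D(\cX)$ agree.
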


\begin{proof} a). This follows directly from the definitions and
Lemma \ref{semi-orth}. b) Follows from a) by
induction on the cardinality of the poset $S.$
\end{proof}

\section{Compact objects and perfect complexes on poset schemes}

Let us first recall the situation with the usual schemes.

\begin{defi} Let $T$ be a triangulated category.

a) An object $K\in T$ is called compact if the functor $\Hom
_T(K,-)$ commutes with direct sums. We denote by $T^c\subset T$ the
full triangulated subcategory of compact objects.

b) An object $K\in T^c$ is called a compact generator of $T$ if
$$K^\perp =\{M\in T\vert \Hom (K,M[n])=0\  \text{for all $n$}\}=0.$$
\end{defi}

\begin{defi} Let $X$ be a scheme. An object $G\in D(X)$
is called {\rm perfect} if locally it is quasi-isomorphic to a
finite complex of free $\cO _X$-modules of finite rank. We denote by
$\Perf (X)\subset D(X)$ the full triangulated subcategory of perfect
objects.
\end{defi}

\begin{theo}\label{BoVdB} \cite{BoVdB} Let $X$ be a scheme. Then

a) $\Perf (X)=D(X)^c,$

b) the category $D(X)$ has a compact generator.
\end{theo}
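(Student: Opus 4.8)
The plan is to prove parts a) and b) together, first disposing of the affine case and the easy inclusion $\Perf(X)\subseteq D(X)^c$, then treating the remaining content by induction on the size of a finite affine cover, where separatedness of $X$ keeps all intersections affine. For the base case let $X=\Spec A$ be affine. The global sections functor induces an equivalence $D(X)=D(\Qcoh X)\simeq D(\Mod A)$ carrying $\cO_X$ to $A$, and it is classical that in $D(\Mod A)$ the object $A$ is a compact generator whose compact objects are exactly the perfect complexes. This gives both a) and b) in the affine case.

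Next I would establish $\Perf(X)\subseteq D(X)^c$ for arbitrary $X$. The essential input is that, $X$ being quasi-compact and separated, $\bR\Gamma(X,-)=\bR\Hom(\cO_X,-)$ commutes with arbitrary direct sums, which one sees by computing cohomology through the finite \v{C}ech complex of an affine cover and using that cohomology commutes with filtered colimits. For a perfect complex $P$ with dual $P^\vee$ one has $\bR\Hom(P,-)\simeq\bR\Gamma(X,P^\vee\otimes^{\bL}(-))$; since both $\otimes^{\bL}$ and $\bR\Gamma$ commute with direct sums, so does $\Hom_{D(X)}(P,-)$, proving $P$ compact.

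The remaining statements --- that every compact object is perfect and that a single compact generator exists --- I would prove by induction on $n$, where $X=V_1\cup\dots\cup V_n$ is a finite affine cover; separatedness guarantees each $V_i\cap V_j$ is affine. Writing $U=V_1\cup\dots\cup V_{n-1}$ and $V=V_n$, with $j:U\hookrightarrow X$ and closed complement $X\setminus U$, the inductive hypothesis supplies compact generators and the identification $D^c=\Perf$ for $U$ and $V$. I would then invoke the Thomason--Neeman localization theorem for the sequence $\Perf_{X\setminus U}(X)\to\Perf(X)\to\Perf(U)$, together with the Thomason--Trobaugh result that every perfect complex on $U$ is a direct summand of the restriction of a perfect complex on $X$, to glue a generator of $D(V)$ (extended to $X$) with a perfect complex supported on $X\setminus V$ into a single compact generator of $D(X)$; the same machinery yields $D(X)^c=\Perf(X)$.

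The main obstacle is exactly this gluing step: one must carry \emph{both} the equality $D^c=\Perf$ and the existence of a \emph{single} compact generator through the induction simultaneously. The delicate point is producing one compact generator rather than merely a generating set, which requires the full force of Neeman's localization theorem together with the extension/summand statement of Thomason--Trobaugh across the open immersion $j$.
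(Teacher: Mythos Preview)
The paper does not give its own proof of this theorem: it is stated with a citation to \cite{BoVdB} and used as a black box. Your outline is essentially the standard Bondal--Van den Bergh argument (building on Thomason--Trobaugh and Neeman), so there is nothing to compare against in the paper itself; your sketch is an accurate summary of how the cited result is proved in the literature.
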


As a consequence of this theorem we obtain an equivalence of
categories $D(X)\simeq D(A)$ for a DG algebra $A.$ Namely, if $K\in
D(X)^c$ is a compact generator and $A=\bR \Hom (K,K),$ then the
functor
$$\bR \Hom (K, -):D(X)\to D(A)$$
is an equivalence (see for example \cite{Lu2}, Proposition 2.6).

We want to prove analogous results for poset schemes.

\begin{defi} Let $\cX=\{X_\alpha ,f _{\alpha \beta}\}$ be a
poset scheme. We call a complex $F=\{F_\alpha\}\in D(\cX)$  {\rm
perfect} if each $F_\alpha \in D(X_\alpha)$ is such. Denote by
$\Perf (\cX)\subset D(\cX)$ the full subcategory of perfect
complexes.
\end{defi}

\begin{remark} Notice that the functors $j^*, j_!, i^*, i_*,
j_{\alpha}^*, \bL j_{\alpha +}$ preserve perfect complexes.
\end{remark}

\begin{prop}\label{comp=perf} $D(\cX)^c=\Perf (\cX).$
\end{prop}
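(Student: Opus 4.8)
The plan is to prove the two inclusions separately, in each case reducing compactness of an object $F=\{F_\alpha\}\in D(\cX)$ to compactness of its components $F_\alpha\in D(X_\alpha)$. By Theorem~\ref{BoVdB}a) the latter is equivalent to $F_\alpha$ being perfect, which is exactly the defining condition for $F\in\Perf(\cX)$. Thus the proposition follows once I establish the componentwise characterization: $F$ is compact in $D(\cX)$ if and only if every $F_\alpha$ is compact in $D(X_\alpha)$.

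For the inclusion $D(\cX)^c\subseteq\Perf(\cX)$, suppose $F$ is compact and fix $\alpha$. I exploit the adjoint pair $(j^*_\alpha,\bR j_{\alpha*})$ together with the fact (recorded in the summary of derived functors) that $\bR j_{\alpha*}$ preserves direct sums, the morphisms $f_{\alpha\beta}$ being quasi-compact and separated. For any family $\{N_i\}$ in $D(X_\alpha)$, adjunction gives $\Hom(F_\alpha,\oplus_i N_i)=\Hom(F,\bR j_{\alpha*}(\oplus_i N_i))=\Hom(F,\oplus_i \bR j_{\alpha*}N_i)$, and since $F$ is compact this splits as $\oplus_i\Hom(F,\bR j_{\alpha*}N_i)=\oplus_i\Hom(F_\alpha,N_i)$. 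Hence $F_\alpha$ is compact, so every component of a compact object is perfect and $F\in\Perf(\cX)$.

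For the reverse inclusion $\Perf(\cX)\subseteq D(\cX)^c$ I argue by induction on $|S|$, the case $|S|=1$ being Theorem~\ref{BoVdB}. Choose a maximal $\beta\in S$, so that $U=U_\beta=\{\beta\}$ is open with complementary closed $Z=S\setminus\{\beta\}$, and consider $j:U\hookrightarrow S$, $i:Z\hookrightarrow S$. Given $F\in\Perf(\cX)$, the gluing triangle $j_!j^*F\to F\to i_*i^*F$ reduces the claim to showing that both outer terms are compact. The term $j_!j^*F=j_!(F_\beta)$ is compact because $F_\beta$ is perfect, hence compact, and $j_!$, being left adjoint to the direct-sum-preserving functor $j^*$, preserves compactness. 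For $i_*i^*F$, note that $i^*F$ has components $F_\gamma$ with $\gamma\in Z$, all perfect, so $i^*F$ is a perfect complex on the smaller poset scheme $\cX(Z)$ and is therefore compact in $D(\cX(Z))$ by the inductive hypothesis.

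The main obstacle is the remaining point: showing that $i_*\colon D(\cX(Z))\to D(\cX)$ sends compacts to compacts. Since $i_*$ is left adjoint to $\bR i^!$, testing compactness of $i_*P$ via $\Hom(i_*P,\oplus_i M_i)=\Hom(P,\bR i^!(\oplus_i M_i))$ shows this is equivalent to $\bR i^!$ commuting with direct sums. I would deduce this from the recollement triangle $i_*\bR i^!M\to M\to \bR j_*j^*M$ for the open--closed pair, where $\bR j_*=\bR j_{\beta*}$ is the right adjoint of $j^*=j^*_\beta$; its existence follows from the adjunctions already assembled, namely $(i_*,\bR i^!)$ and $(j^*_\beta,\bR j_{\beta*})$, together with the vanishing $j^*i_*=0$ (as $(i_*P)_\beta=0$). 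In this triangle both $\mathrm{Id}$ and $\bR j_*j^*$ preserve direct sums, the latter because $\bR j_{\beta*}$ and $j^*$ do; hence the third vertex $M\mapsto i_*\bR i^!M$ preserves direct sums as well. Applying the faithful functor $i^*$ and using $i^*i_*=\mathrm{Id}$ then shows $\bR i^!$ preserves direct sums, so $i_*$ preserves compactness. Consequently $i_*i^*F$ is compact, the gluing triangle forces $F$ to be compact, and the induction closes, completing the proof that $\Perf(\cX)\subseteq D(\cX)^c$ and hence $D(\cX)^c=\Perf(\cX)$.
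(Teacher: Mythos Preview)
Your proof is correct and takes a genuinely different route from the paper's. Two points of comparison are worth noting.

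For the inclusion $D(\cX)^c\subseteq\Perf(\cX)$, your argument is cleaner: you observe directly that each $j_\alpha^*$ preserves compact objects (its right adjoint $\bR j_{\alpha*}$ preserves direct sums), so every component $F_\alpha$ of a compact $F$ is compact and hence perfect. No induction is needed. The paper instead folds this direction into the same induction on $|S|$ used for the converse, passing through the triangle $\bL j_{\alpha+}j_\alpha^*F\to F\to C$ with $C\in D_U(\cX)$.

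For the inclusion $\Perf(\cX)\subseteq D(\cX)^c$, you remove a \emph{maximal} element $\beta$ (so $U=\{\beta\}$ is open), while the paper removes a \emph{minimal} element $\alpha$ (so $\{\alpha\}$ is closed). Your key step is that $i_*$ preserves compacts, which you deduce from the second recollement triangle $i_*\bR i^!M\to M\to \bR j_{\beta*}j_\beta^*M$: since $\bR j_{\beta*}$ and $j_\beta^*$ preserve direct sums, so does $i_*\bR i^!$, and applying $i^*$ gives that $\bR i^!$ does too. This triangle is not stated in the paper, but it follows from the ingredients you cite (the adjunctions $(i_*,\bR i^!)$ and $(j_\beta^*,\bR j_{\beta*})$, full faithfulness of $i_*$ and $\bR j_{\beta*}$, and $j_\beta^*i_*=0$); your justification is terse but correct. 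The paper avoids this second triangle entirely: working with minimal $\alpha$, it uses the triangle $j_!j^*F\to F\to \bR j_{\alpha*}j_\alpha^*F$ and shows the last term is compact by comparing it with $\bL j_{\alpha+}j_\alpha^*F$ (compact and perfect) and noting their cone lies in $D_U(\cX)\cap\Perf(\cX)$, hence is compact by the inductive step already established. Your approach is more conceptual (recollement), the paper's more self-contained; both are valid.
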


\begin{proof}
Fix a minimal element $\alpha \in S.$ Let $U=S-\{ \alpha \}$ and
denote by $j:U\hookrightarrow S$ and $j_{\alpha }:\{ \alpha \}
\hookrightarrow S$ the corresponding open and closed embeddings.

\begin{lemma} The functors $j_\alpha ^*,j_!,$ and $\bL j_{\alpha
+}$ preserve compact objects.
\end{lemma}

\begin{proof}  Indeed, their respective right adjoint
functors $\bR j_{\alpha *},j^*,j_\alpha ^*$ preserve direct sums.
\end{proof}

By Theorem \ref{BoVdB} the proposition holds if $\vert S\vert =1.$
So by induction we may assume that it holds for $X_\alpha $ and $\cX
(U).$

By Lemma \ref{full-faithful} the functor $j_!: D(\cX (U))\to D(\cX)$
is full and faithful with the essential image $D_U(\cX ).$ Let $M\in
D_U(\cX)$ be perfect. Then $j_!^{-1}M\in D(\cX (U))$ is also
perfect, hence compact by induction. Therefore $M=j_!(j_!^{-1}M)\in
D(\cX)$ is also compact. Vice versa, let $M\in D(\cX)^c\cap
D_U(\cX).$ Then $M\in D_U(\cX)^c$ because the inclusion
$D_U(\cX)\subset D(\cX)$ preserves direct sums. So $j_!^{-1}(M)\in
D(\cX (U))^c.$ By induction $j^{-1}_!(M)$ is perfect, so $M$ is also
perfect. We proved that $D(\cX)^c\cap D_U(\cX)=\Perf (\cX)\cap
D_U(\cX).$

Fix $F\in D(\cX)^c.$ Then $F_\alpha =j^*_\alpha F\in D(X_\alpha)^c$,
hence $F_\alpha $ is perfect by induction. Then $\bL j_{\alpha
+}j_\alpha ^*F$ is also compact and perfect. Hence the cone $C(g)$
of the canonical morphism $g:\bL j_{\alpha +}j^*_\alpha F\to F$ is
compact. But $C(g)\in D_U( \cX),$ so $C(g)\in \Perf (\cX).$ Thus
$F\in \Perf (\cX).$

Vice versa, let $F\in \Perf (\cX)$. Then $j^*F\in \Perf (\cX (U)),$
$j_\alpha ^*F\in \Perf (X_\alpha ).$ By induction $j^*F\in D( \cX
(U))^c$ and so $j_!j^*F\in D(\cX)^c.$ Also by induction $j^*_\alpha
F\in D(X_\alpha)^c.$ Consider the exact triangle
$$j_!j^*F\to F \to \bR j_{\alpha *}j_\alpha ^*F.$$
It suffices to show that $\bR j_{\alpha *}j^*_\alpha F$ is compact.
(Notice that $\bR j_{\alpha *}j_\alpha ^*F$ is perfect because
$\alpha$ is a minimal element.) We know that $\bL j_{\alpha
+}j_\alpha ^*F$ is perfect and compact. So the cone $C(p)$ of the
canonical morphism
$$p:\bL j_{\alpha +}j^*_\alpha F \to \bR j_{\alpha *}j^*_\alpha F$$
is perfect. Also $C(p)\in D_U(\cX).$ Hence  $C(p)\in D(\cX)^c$ and
so also $\bR j_{\alpha *}j^*_\alpha F$ is compact.
\end{proof}

\subsection{Existence of a compact generator}

\begin{lemma} \label{existence-comp-gen}
The category $D(\cX)$ has a compact generator.
\end{lemma}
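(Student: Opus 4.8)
The plan is to construct a compact generator of $D(\cX)$ explicitly by gluing together compact generators of the individual derived categories $D(X_\alpha)$, using the fully faithful embeddings and the characterization of compact objects as perfect complexes established in Proposition \ref{comp=perf}. For each $\alpha \in S$, Theorem \ref{BoVdB} furnishes a compact generator $K_\alpha \in D(X_\alpha)^c = \Perf(X_\alpha)$. The natural candidate for a generator of $D(\cX)$ is then
$$K := \bigoplus_{\alpha \in S} \bL j_{\alpha +} K_\alpha,$$
and I would verify that $K$ is both compact and a generator.

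First I would check compactness. Each $\bL j_{\alpha +} K_\alpha$ is compact: the functor $\bL j_{\alpha +}$ preserves compact objects (its right adjoint $j^*_\alpha$ preserves direct sums, as noted in the proof of Proposition \ref{comp=perf}), and $K_\alpha$ is compact in $D(X_\alpha)$. Since $S$ is finite, the direct sum $K$ is a finite sum of compact objects and is therefore itself compact. Alternatively, $K$ is perfect because each $\bL j_{\alpha +} K_\alpha$ is perfect (the remark preceding Proposition \ref{comp=perf}), so $K \in \Perf(\cX) = D(\cX)^c$.

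The main work is showing $K$ is a generator, i.e. that $K^\perp = 0$. Suppose $M \in D(\cX)$ satisfies $\Hom(K, M[n]) = 0$ for all $n$; equivalently $\Hom(\bL j_{\alpha +} K_\alpha, M[n]) = 0$ for every $\alpha$ and every $n$. By the adjunction $(\bL j_{\alpha +}, j^*_\alpha)$ from Lemma \ref{adjunction}, this is the same as
$$\Hom_{D(X_\alpha)}(K_\alpha, (j^*_\alpha M)[n]) = \Hom_{D(X_\alpha)}(K_\alpha, M_\alpha[n]) = 0$$
for all $\alpha$ and $n$. Since $K_\alpha$ is a compact generator of $D(X_\alpha)$, this forces $M_\alpha = 0$ in $D(X_\alpha)$ for every $\alpha$. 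A complex with all components acyclic is acyclic, so $M = 0$ in $D(\cX)$. Hence $K^\perp = 0$ and $K$ is a compact generator.

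The step I expect to be the crux is the reduction via adjunction, which relies essentially on Lemma \ref{adjunction}; once the adjunction $(\bL j_{\alpha +}, j^*_\alpha)$ is in hand, the generation statement follows cleanly from the fact that each $K_\alpha$ generates $D(X_\alpha)$ and that acyclicity in $D(\cX)$ is detected componentwise. An alternative approach would bypass the explicit generator and instead argue by induction on $\vert S\vert$ using the semi-orthogonal decomposition of Corollary \ref{final-semi-orth} together with the general principle that a category glued from two pieces with compact generators (via a right-admissible subcategory whose embedding preserves direct sums) again has one; but the direct construction above is more transparent and self-contained.
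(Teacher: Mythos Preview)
Your proof is correct and follows essentially the same approach as the paper: both construct the generator as $\bigoplus_{\alpha} \bL j_{\alpha +} K_\alpha$ from compact generators $K_\alpha$ of the $D(X_\alpha)$, verify compactness via preservation under $\bL j_{\alpha +}$, and prove generation by the adjunction $(\bL j_{\alpha +}, j^*_\alpha)$ reducing to componentwise vanishing. Your write-up is simply more detailed than the paper's terse version.
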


\begin{proof} Choose a compact generator $E_\alpha \in D(
X_\alpha)$ for each $\alpha \in S.$ Put $E:=\oplus \bL j_{\alpha
+}E_\alpha.$ Then $E\in D(\cX)^c$, since the functor $\bL j_{\alpha
+}$ preserves compact objects. For $M\in D( \cX)$ we have by
adjunction
$$\Hom (E,M)=\bigoplus _\alpha \Hom (E_\alpha ,M_\alpha).$$
So $\Hom (E[i],M)=0$ for all $i$ implies that $M=0.$
\end{proof}

\begin{defi} A compact generator $E\in D(\cX)$ as constructed in the
proof of last lemma will be called {\rm special}.
\end{defi}

We get the following standard corollary.

\begin{cor} The category $D(\cX)$ is equivalent to $D(A)$ for a
DG algebra $A.$
\end{cor}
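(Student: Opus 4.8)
The plan is to invoke the general principle, already cited earlier in this excerpt for ordinary schemes, that a cocomplete triangulated category possessing a compact generator and admitting a suitable enhancement is equivalent to the derived category $D(A)$ of a DG algebra $A$. The statement to be proven is that $D(\cX)$ is such a category, so the whole corollary reduces to verifying the two hypotheses of that principle. The word ``standard'' in the statement signals that no new argument is needed: everything has been assembled in the preceding lemmas.

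First I would observe that $D(\cX)=D(\cM)$ is cocomplete. This follows from Lemma \ref{Groth}, where it was shown that $\cM=Qcoh\cX$ is a Grothendieck category; the derived category of a Grothendieck category is cocomplete and admits a DG enhancement (via h-injective resolutions, whose existence was also recorded in Lemma \ref{Groth}). Second, I would cite Lemma \ref{existence-comp-gen}, which produces a compact generator $E\in D(\cX)$ as the special object $E=\oplus_\alpha \bL j_{\alpha +}E_\alpha$. At this point both hypotheses are in hand.

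The final step is to set $A=\bR\Hom(E,E)$, the DG endomorphism algebra of the compact generator $E$ computed in the enhancement, and conclude by the quoted result (the same one applied to ordinary schemes right after Theorem \ref{BoVdB}, with reference \cite{Lu2}, Proposition 2.6) that the functor $\bR\Hom(E,-):D(\cX)\to D(A)$ is an equivalence of triangulated categories. I would state this explicitly so that the corollary is not merely an existence assertion but exhibits the DG algebra concretely, in parallel with the discussion following Theorem \ref{BoVdB}.

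I do not expect any genuine obstacle here, since the corollary is a formal consequence of material already established. The only point deserving a word of care is the enhancement: the general equivalence $D(\cX)\simeq D(A)$ requires that the triangulated category carry a DG (or otherwise suitable) enhancement, and one should note that $D(\cX)$ inherits such an enhancement from the category of complexes $C(\cM)$ together with the h-injective resolutions guaranteed by Lemma \ref{Groth}. Beyond flagging this, the argument is a direct citation and needs no further calculation.
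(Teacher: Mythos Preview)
Your proposal is correct and matches the paper's own proof essentially verbatim: choose a compact generator $E$ (from Lemma \ref{existence-comp-gen}), set $A=\bR\Hom(E,E)$, and invoke the general principle (\cite{Lu2}, Proposition 2.6) that $\bR\Hom(E,-):D(\cX)\to D(A)$ is an equivalence. Your additional remarks on cocompleteness and the enhancement via h-injectives are more explicit than the paper's terse two-line proof, but the argument is the same.
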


\begin{proof} If $E$ is a compact generator of $D(\cX)$ and $A=\bR
\Hom (E,E),$ then the functor
$$\bR \Hom (E,-):D(\cX)\to D(A)$$
is an equivalence of categories.
\end{proof}

\section{Smoothness of poset schemes}

In this section we prove the following theorem.

\begin{theo} \label{smooth=smooth} Let $k$ be a perfect field, $S$ - a
(finite) poset and $\cX$ a regular $S$-scheme essentially of finite
type. Then the derived category $D(\cX)$ is smooth.
\end{theo}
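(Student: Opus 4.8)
The plan is to reduce the assertion to the smoothness of a DG algebra and then argue by induction on $|S|$ using the semi-orthogonal decomposition of Corollary \ref{final-semi-orth}. By Lemma \ref{existence-comp-gen} I choose a special compact generator $E=\bigoplus_\alpha \bL j_{\alpha +}E_\alpha$, where $E_\alpha$ is a compact generator of $D(X_\alpha)$, and set $A=\bR\Hom(E,E)$. Since $D(\cX)\simeq D(A)$ and smoothness of the category is, by definition, Morita-invariant smoothness of $A$, it suffices to show that $A$ is a smooth DG algebra. The base case $|S|=1$ is exactly the fact recalled in the introduction: a regular scheme essentially of finite type over a perfect field is smooth, and a scheme is smooth iff its derived category is smooth.

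For the inductive step I fix a minimal element $\alpha\in S$ and put $U=S\setminus\{\alpha\}$, which is open because $\{\alpha\}=Z_\alpha$ is closed. Corollary \ref{final-semi-orth}(a) then yields the semi-orthogonal decomposition $D(\cX)=\langle D(X_\alpha),D(\cX(U))\rangle$. The adjunction $(\bL j_{\alpha +},j^*_\alpha)$ of Lemma \ref{adjunction} computes the blocks of $A$ as $\bR\Hom(\bL j_{\alpha +}E_\alpha,\bL j_{\beta +}E_\beta)=\bR\Hom_{X_\alpha}(E_\alpha,\bL f^*_{\alpha\beta}E_\beta)$, which vanishes unless $\alpha\geq\beta$. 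Hence $A$ is the triangular DG algebra $A=\left(\begin{smallmatrix}A_\alpha&\Phi\\0&B\end{smallmatrix}\right)$, where $A_\alpha=\bR\Hom(E_\alpha,E_\alpha)$, $B=\bR\Hom(E_U,E_U)$ with $E_U=\bigoplus_{\beta\in U}\bL j_{\beta +}E_\beta$, and $\Phi$ is the single nonzero off-diagonal block (the block from the $U$-part to the $\alpha$-part; the other block vanishes by minimality of $\alpha$). By induction $B$ is smooth, and $A_\alpha$ is smooth by the base case.

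I would then invoke and prove the gluing lemma: a triangular DG algebra whose diagonal blocks are smooth and whose gluing bimodule is perfect is itself smooth. This follows by resolving the diagonal bimodule of $A$: since the opposite off-diagonal block vanishes, the bar filtration of the diagonal truncates, its associated graded being induced from ${}_{A_\alpha}(A_\alpha)_{A_\alpha}$, ${}_{B}B_{B}$ and ${}_{A_\alpha}\Phi_{B}$, and each of these is perfect over $A^{\op}\otimes A$ once $A_\alpha,B$ are smooth and $\Phi$ is perfect over $A_\alpha^{\op}\otimes B$. It remains to verify that $\Phi$ is a perfect bimodule. First I show it is perfect as a right $B$-module: by the adjunction $(j_!,j^*)$ of Lemma \ref{full-faithful} the module $\Phi$ corresponds, under the equivalence $D(\cX(U))\simeq D(B)$, to $j^*\bL j_{\alpha +}E_\alpha\in D(\cX(U))$, whose $\beta$-component is $\bL f^*_{\beta\alpha}E_\alpha$; as the pullback of the perfect complex $E_\alpha$ it is perfect, so by Proposition \ref{comp=perf} the object lies in $\Perf(\cX(U))=D(\cX(U))^c$, i.e. $\Phi$ is a perfect right $B$-module. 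Finally I upgrade to bimodule perfectness using smoothness of $A_\alpha$: writing $\Phi\simeq A_\alpha\otimes^{\bL}_{A_\alpha}\Phi$ and substituting the finite $A_\alpha^{\op}\otimes A_\alpha$-perfect resolution of the diagonal $A_\alpha$ exhibits $\Phi$ as assembled from $A_\alpha^{\op}\otimes_k\Phi\in\Perf(A_\alpha^{\op}\otimes B)$, whence $\Phi\in\Perf(A_\alpha^{\op}\otimes B)$; the gluing lemma then gives that $A$ is smooth, completing the induction.

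The main obstacle is the gluing lemma together with this one-sided-to-two-sided perfectness step. The bookkeeping of the $A$-bimodule filtration, and the verification that the exact functor $(-)\otimes^{\bL}_{A_\alpha}\Phi$ carries the perfect resolution of the $A_\alpha$-diagonal into $\Perf(A_\alpha^{\op}\otimes B)$, are precisely where the two smoothness hypotheses are genuinely consumed; the remaining ingredients (the triangular shape of $A$, perfectness of $\Phi$ on one side) are formal consequences of the functor formalism and of $\Perf(\cX)=D(\cX)^c$ already established.
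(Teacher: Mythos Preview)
Your proposal is correct and follows the same global architecture as the paper: induction on $|S|$ via a minimal element, recognition of $A$ as a triangular DG algebra, and appeal to the gluing criterion (which is precisely \cite{Lu2}, Prop.~3.11) reducing to perfectness of the off-diagonal bimodule $\Phi$ (the paper's $N$).

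The genuine difference is in how you establish that $\Phi$ is perfect over $A_\alpha^{\op}\otimes B$. The paper does this \emph{geometrically}: it builds the product $S'$-scheme $\cY=\cX'\times X_\alpha$, proves $D(\cY)\simeq D(A_\alpha^{\op}\otimes A')$ via an explicit special generator, and identifies $\Phi$ with the image of a concrete coherent sheaf $F$ on $\cY$ (the structure sheaves of the graphs of the $f_{\beta\alpha}$); regularity of $\cY$ then gives $F\in\Perf(\cY)$ and hence $\Phi$ perfect as a bimodule. You instead argue \emph{algebraically}: first show $\Phi$ is perfect as a right $B$-module by identifying it with $j^*\bL j_{\alpha+}E_\alpha\in\Perf(\cX(U))$, and then upgrade to two-sided perfectness by tensoring the perfect resolution of the diagonal of $A_\alpha$ (available since $A_\alpha$ is smooth) with $\Phi$, observing that the free $A_\alpha$-bimodule is sent to $A_\alpha\otimes_k\Phi|_B\in\Perf(A_\alpha^{\op}\otimes B)$. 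This bootstrapping step is valid and is a clean shortcut: it bypasses the construction of the product poset scheme and the two auxiliary lemmas of the paper, at the cost of using the smoothness of $A_\alpha$ a second time. The paper's route, on the other hand, keeps everything at the level of sheaves and makes the regularity hypothesis visibly do the work on both sides of the bimodule simultaneously.

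One minor remark: your sketch of the gluing lemma (``bar filtration of the diagonal truncates'') is a bit telegraphic; in the paper this is simply cited as \cite{Lu2}, Prop.~3.11, and you should either cite it or write out the two-step filtration of the diagonal $A$-bimodule more carefully. Also, the adjunction $(j_!,j^*)$ you invoke is recorded in the summary of functors rather than in Lemma~\ref{full-faithful} itself.
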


\begin{proof} For each $\alpha \in S$ choose a compact generator
$E_\alpha$ for $D(X_\alpha)$. Then by (the proof of) Lemma
\ref{existence-comp-gen}  the object
$$E:=\bigoplus_{\alpha \in S}\bL j_{\alpha +}E_{\alpha}$$
is a compact generator for $D(\cX).$ Put $A:=\bR \Hom (E,E).$ It
suffices to prove that the DG algebra $A$ is smooth.

Choose a minimal element $\delta \in S,$ and consider the poset
$S^\prime :=S-\{\delta\}.$ Let $\cX ^\prime :=\cX - X_{\delta}$ be
the corresponding $S^\prime$-scheme.

 Since $(\bL j_{\alpha +}E_{\alpha
})\vert _{X_{\delta}}=0$ for each $\alpha \neq \delta,$ we may
consider
$$E^\prime:=\bigoplus _{\alpha \in S^\prime}\bL j_{\alpha
+}E_{\alpha}$$ as a compact generator of $D(\cX ^\prime).$ Put
$A^\prime :=\bR \Hom (E^\prime ,E^\prime).$ (The quasi-isomorphism
type of $A^\prime$ is independent of where we compute this $\bR
\Hom$: in $D(\cX)$ or  $D(\cX ^\prime).$)

By \cite{Lu2}, Proposition 3.13 and the induction on $\vert S\vert$
we may assume that $A^\prime$ is smooth. Denote
$$A_{\delta}:=\bR Hom (\bL j_{\delta +}E_{\delta}, \bL j_{\delta
+}E_{\delta})\simeq \bR \Hom (E_{\delta},E_{\delta}).$$ Then
$A_{\delta}$ is also smooth for the same reason. Notice that $\bR
\Hom (\bL j_{\delta +}E_{\delta}, E^\prime)=0,$ hence $A$ is
quasi-isomorphic to the triangular DG algebra
$$\left( \begin{array}{cc}
           A ^\prime & 0\\
           {}_{A_{\delta}}N_{A^\prime} & A _{\delta}
           \end{array} \right),
$$
where $N=\bR \Hom (E^\prime ,\bL j_{\delta +}E_\delta).$ So by
\cite{Lu2}, Proposition 3.11 it suffices to show that the DG
$A^{\op}_{\delta}\otimes A^\prime$-module $N$ is perfect.

Consider the $S^\prime $-scheme $\cY=\cX^\prime \times X_\delta.$
That is $\cY$ consists of schemes $X_\alpha \times X_\delta$ for
$\alpha \in S^\prime$ and morphisms $f_{\alpha \beta}\times \id :
X_\alpha \times X_\delta \to X_\beta \times X_\delta.$ We denote the
inclusion $X_\alpha \times X_\delta \to \cY$ by $j_{(\alpha
,\delta)}.$

Let $E_{\delta }^*:=\bR \Hom (E_{\delta},\cO _{X_{\delta}})$ be the
dual compact generator of $D(X_{\delta}).$ Then $\bR \Hom (E_{\delta
}^* ,E_{\delta }^*)\simeq A_{\delta}^{\op}$ \cite{Lu2}, Lemma 3.15.
For each $\alpha \in S^\prime$ $E_\alpha \boxtimes E^*_\delta$ is a
compact generator of $D(X_\alpha \times X_\delta)$ \cite{BoVdB},
Lemma 3.4.1. Thus
$$\tilde{E}:=\bigoplus_{\alpha \in S^\prime}\bL j_{(\alpha ,\delta)
+}(E_\alpha \boxtimes E^*_{\delta})$$ is a special compact generator
for $D(\cY).$

\begin{lemma} There is a natural quasi-isomorphism of DG algebras
$$\bR \Hom (\tilde{E},\tilde{E})\simeq A_\delta ^{\op}\otimes
A^\prime.$$
\end{lemma}

\begin{proof} We have
$$\begin{array}{rcl}
\bR \Hom (\tilde{E},\tilde{E}) & \simeq &  \bigoplus _{\alpha \geq
\beta}\bR \Hom (\bL j_{(\alpha ,\delta)+}(E_\alpha \boxtimes
E_{\delta }^*),
\bL j_{(\beta ,\delta)+}(E_\beta \boxtimes E_{\delta }^*))\\
 & \simeq & \bigoplus _{\alpha \geq
\beta}\bR \Hom (E_\alpha \boxtimes E_{\delta }^*, \bL (f_{\alpha
\beta}\times \id)^*(E_\beta \boxtimes E_{\delta }^*))\\
 & \simeq & \bigoplus _{\alpha \geq
\beta}\bR \Hom (E_\alpha \boxtimes E_{\delta }^*, \bL f_{\alpha
\beta}^*E_\beta \boxtimes E_{\delta }^*).
\end{array}
$$
Now by \cite{Lu2}, Proposition 6.20
$$\begin{array}{cl}
 & \bR \Hom (E_\alpha \boxtimes E_{\delta }^*, \bL f_{\alpha
\beta}^*E_\beta \boxtimes E_{\delta }^*)\\
\simeq & \bR \Hom (E_\alpha ,\bL f_{\alpha \beta }^*E_\beta)\otimes
\bR \Hom (E_\delta ^*,E_\delta ^*)\\
\simeq &  \bR \Hom (E_\alpha ,\bL f_{\alpha \beta }^*E_\beta)\otimes
A_\delta ^{\op}.
\end{array}
$$
Similarly,
$$\begin{array}{rcl}
\bR \Hom (E^\prime ,E^\prime) & \simeq & \bigoplus _{\alpha \geq
\beta}
\bR \Hom (\bL j_{\alpha +}E_\alpha ,\bL j_{\beta +}E_\beta)\\
 & \simeq & \bigoplus_{\alpha \geq \beta} \bR \Hom (E_\alpha ,\bL
 f_{\alpha \beta}^*E_\beta).
 \end{array}
$$
This proves the lemma.
\end{proof}

It follows that the functor
$$\Psi _{\tilde{E}}(-):=\bR \Hom (\tilde{E},-):D(\cY)\to D(A_\delta
^{\op}\otimes A^\prime)$$ is an equivalence of categories.

For each $\alpha \in S^\prime,$ such that $\alpha >\delta$ denote by
$\Gamma (\alpha ,\delta)\subset X_\alpha \times X_\delta$ the graph
of the map $f_{\alpha ,\delta}:X_{\alpha}\to X_{\delta}.$  Define
the coherent sheaf $F$ on $\cY$ as follows. For $\alpha \in
S^\prime$ such that $\alpha >\delta$ put $F_{\alpha}:=\cO _{\Gamma
(\alpha \delta)}\in coh (X_\alpha \times X_\delta).$ If $\delta
\nless \alpha,$ then put $F_{\alpha}=0.$ The structure morphism
$\phi _{\alpha \beta}:f^*_{\alpha \beta}F_\beta \to F_\alpha$ is the
canonical isomorphism.

\begin{lemma} We have $\Psi _{\tilde{E}}(F)\simeq N.$
\end{lemma}

\begin{proof} By definition
$$\begin{array}{rcl}
N & = & \bR \Hom _{\cX}(E^\prime ,\bL j_{\delta +}E_{\delta})\\
  & = & \bigoplus _{\alpha \in S^\prime}
  \bR \Hom _{\cX}(\bL j_{\alpha +}E_{\alpha} ,\bL j_{\delta +}E_{\delta})\\
  & = & \bigoplus _{\alpha \in S^\prime}
  \bR \Hom _{X_{\alpha}}(E_{\alpha} ,\bL f^*_{\alpha \delta}E_{\delta})
  \end{array}
  $$
On the other hand
$$\begin{array}{rcl}
\bR \Hom _{\cY}(\tilde{E} ,F) & = & \bigoplus _{\alpha \in S^\prime}
\bR \Hom _{\cY}(\bL j_{(\alpha,\delta) +}(E_{\alpha}
\boxtimes E_{\delta}^*) ,F)\\
 & = & \bigoplus _{\alpha \in S^\prime} \bR
\Hom _{X_{\alpha} \times X_{\delta}}(E_{\alpha} \boxtimes
E_{\delta}^* ,\cO _{\Gamma (\alpha \delta)})\\
\end{array}$$
Let us analyze one summand in the last sum. Denote by $E_{\alpha}
\stackrel{p_{\alpha}}{\leftarrow} E_{\alpha}\times E_{\delta}
\stackrel{p_{\delta}}{\rightarrow} E_{\delta}$ and by $\gamma
:\Gamma (\alpha \delta)\to X_{\delta}$ the obvious projections.
$$\begin{array}{rl}
 & \bR
\Hom (E_{\alpha} \boxtimes E_{\delta}^* ,\cO _{\Gamma (\alpha
\delta)}) \\ = & \bR \Hom (p_{\alpha}^*E_{\alpha} \otimes
p^*_{\delta} \bR \cH om (E_{\delta}, \cO _{X_{\delta}} ),\cO
_{\Gamma (\alpha
\delta)}) \\
= &  \bR \Hom (p_{\alpha}^*E_{\alpha}, \bR \cH om (p^*_{\delta} \bR
\cH om (E_{\delta}, \cO _{X_{\delta}} ),\cO _{\Gamma (\alpha
\delta)}))\\
 = & \bR \Hom (p_{\alpha}^*E_{\alpha}, \bR \cH om _{\Gamma (\alpha
\delta)} (\bL \gamma ^*_{\delta} \bR \cH om (E_{\delta}, \cO
_{X_{\delta}} ),\cO _{\Gamma (\alpha \delta)}))\\
= &  \bR \Hom (p_{\alpha}^*E_{\alpha}, \bR \cH om _{\Gamma (\alpha
\delta)} ( \bR \cH om _{\Gamma (\alpha \delta)}(\bL \gamma
^*E_{\delta},
\cO _{\Gamma (\alpha \delta)} ),\cO _{\Gamma (\alpha \delta)}))\\
 = & \bR \Hom (p_{\alpha
}^*E_{\alpha}, \bL \gamma ^*E_{\delta})\\
 = & \bR \Hom (E_{\alpha},\bL f^*_{\alpha
\delta}E_{\delta}).
\end{array}
$$
This proves the lemma.
\end{proof}

Since the poset scheme $\cY$ is regular the object $F\in D(\cY)$ is
compact by Proposition \ref{comp=perf}. Hence $N\simeq \Psi
_{\tilde{E}} (F)\in D(A^{\op}_{\delta}\otimes A^\prime)$ is also
compact, i.e. is perfect. This proves Theorem \ref{smooth=smooth}.
\end{proof}

\section{Direct and inverse image functors for morphisms of poset schemes}

Let $S,$ $S^\prime$ be  posets and $\tau :S\to S^\prime$ be an order
preserving map. Let $\cX=\{X_\alpha ,f_{\alpha \beta}\}$ (resp. $\cX
^\prime=\{X^\prime _{\alpha ^\prime},f_{\alpha ^\prime \beta
^\prime}\}$) be an $S$-scheme (resp. an $S^\prime$-scheme).

\begin{defi}\label{tau-morphism}
A $\tau$-morphism $\cF:\cX \to \cX ^\prime $ is a collection of
morphisms $\{\cF_{\alpha}:X_\alpha \to X^\prime _{\tau (\alpha)}\}
_{\alpha \in S}$ such that for each $\alpha \geq \beta $ the
following diagram commutes
$$
\begin{array}{ccc}
X_\alpha & \stackrel{f_{\alpha \beta}}{\to} & X_{\beta}\\
\downarrow \cF_{\alpha} & & \downarrow \cF_{\beta} \\
X^\prime _{\tau (\alpha)} & \stackrel{f^\prime _{\tau (\alpha) \tau
(\beta )}}{\to} & X^\prime _{\tau (\beta)}
\end{array}
$$
\end{defi}

Let $\cF:\cX \to \cX ^\prime$ be a $\tau$-morphism and $G\in Qcoh
_{\cX ^\prime}.$ We define $\cF ^*G\in Qcoh \cX$ as follows. For
$\alpha \in S$ put $(\cF ^*G)_{\alpha}=\cF _{\alpha}^*G_{\tau
(\alpha)}$ and define the structure morphism $\phi _{\alpha \beta}:
f^*_{\alpha \beta}\cF _\beta ^*G_{\tau (\beta)}\to \cF
^*_{\alpha}G_{\tau(\alpha )}$ as $\cF ^*_{\alpha}\phi ^\prime
_{\tau(\alpha )\tau (\beta)},$ where $\phi ^\prime $ is the
structure morphism for $G.$ This defines a functor $\cF ^*:Qcoh \cX
^\prime \to Qcoh \cX .$ We also consider its left derived functor
$\bL \cF ^*:D(\cX ^\prime)\to D(\cX)$ which is defined using the
h-flats.

Notice that the functor $\cF^*$ preserves h-flats.

\begin{example} \label{pullback-differentials}
We have $\cF ^*\cO _{\cX ^\prime}= \bL \cF ^*\cO _{\cX ^\prime}=\cO
_{\cX}.$ Hence, for any $G\in D(\cX^\prime)$ we obtain the map $\cF
^*:H^\bullet (\cX ^\prime ,G) \to H^\bullet (\cX, \bL\cF ^*G);$ in
particular we get the map $\cF ^*:H^\bullet (\cX ^\prime ,\cO _{\cX
^\prime}) \to H^\bullet (\cX, \cO _{\cX}).$ Also the usual morphism
$\cF ^*\Omega ^i_{\cX ^\prime}\to \Omega ^i_{\cX}$ induces the map
$$H^\bullet (\cX ^\prime ,\Omega ^i_{\cX ^\prime})\to H^\bullet (\cX
, \Omega ^i_{\cX}).$$
\end{example}

Given another morphism of poset schemes $\cF ^\prime :\cX ^\prime
\to \cX ^{\prime \prime}$ there are natural isomorphisms of functors
$\cF ^*\cF ^{\prime *} \simeq (\cF ^\prime \cF )^*.$ Since the
functor $\cF ^{\prime *}$ preserves h-flats we also have an
isomorphism $\bL \cF ^*\cdot \bL \cF ^{\prime
*} \simeq \bL (\cF ^\prime \cF )^*.$

The functor $\cF ^*$ has the right adjoint functor $\cF _*$ which we
now describe.

We will use Remark \ref{alternative-def}

For  $\alpha ^\prime \in S^\prime$ we put $\tau ^{-1}(\geq \alpha
^\prime):=\{ \gamma \in S\vert \tau (\gamma )\geq \alpha ^\prime\}.$
Fix $F\in Qcoh \cX.$ If $\gamma \in \tau ^{-1}(\geq \alpha
^\prime),$ then $f^\prime _{\tau (\gamma)\alpha ^\prime *}(\cF
_{\gamma
*}F_{\gamma })\in Qcoh X^\prime _{\alpha ^\prime}.$ If $\delta \geq
\gamma, $ then the structure morphism $\psi _{\delta \gamma
}:F_{\gamma }\to f_{\delta \gamma *}F_{\delta}$ induces the morphism
$$f^\prime _{\tau (\gamma)\alpha ^\prime *}(\cF
_{\gamma *}F_{\gamma })\to f^\prime _{\tau (\delta)\alpha ^\prime
*}(\cF _{\delta *}F_{\delta }).$$
We define
$$(\cF _*F)_{\alpha ^\prime}=\lim _{\stackrel{\longleftarrow}{\gamma
\in \tau ^{-1}(\geq \alpha ^\prime)}}f^\prime _{\tau (\gamma)\alpha
^\prime *}(\cF _{\gamma *}F_{\gamma }).$$ If $\alpha ^\prime \geq
\beta ^\prime$ there is a natural morphism $\psi ^\prime _{\alpha
^\prime \beta ^\prime }:f^\prime _{\alpha ^\prime \beta ^\prime
*}(\cF _*F)_{\alpha ^\prime}\to (\cF _*F)_{\beta ^\prime}.$ Thus
$\cF _*F\in Qcoh \cX ^\prime$ and we get a functor $\cF _*:Qcoh \cX
\to Qcoh \cX ^\prime.$ We define its right derived functor $\bR \cF
_*:D(\cX )\to D(\cX ^\prime)$ using the h-injectives. The pairs of
functors $(\cF ^*,\cF _*)$ and $(\bL \cF ^*,\bR \cF _*)$ and
adjoint.

Given another morphism of poset schemes $\cF ^\prime :\cX ^\prime
\to \cX ^{\prime \prime}$ there are natural isomorphisms of functors
$\cF ^{\prime }_*\cF _* \simeq (\cF ^\prime \cF )_*.$ Although the
functor $\cF _*$ may not preserve h-injectives we still have a
natural isomorphism of functors $\bR \cF ^{\prime }_*\cdot \bR \cF
_* \simeq \bR (\cF ^\prime \cF )_*$ (this follows by adjunction from
the isomorphism $\bL \cF ^*\cdot \bL \cF ^{\prime
*} \simeq \bL (\cF ^\prime \cF )^*$).

The direct image functor may be computed fiberwise in case $\tau$ is
the projection of a product poset on one of the factors. Namely we
have the following lemma.

\begin{lemma}\label{direct-image-fiber} Assume that $T$ is a poset,
$S=S^\prime \times T$ is the product poset and   $\tau :S\to
S^\prime$ is the projection. Then in the above notation for any
$\alpha ^\prime \in S^\prime$ we have
$$(\cF _*F)_{\alpha ^\prime}=\lim _{\stackrel{\longleftarrow}{\gamma
\in \tau ^{-1}(\alpha ^\prime)}}\cF _{\gamma
*}F_{\gamma }.$$
\end{lemma}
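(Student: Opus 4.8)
The plan is to recognize the right-hand side as a cofinal (initial) restriction of the inverse limit defining $(\cF_*F)_{\alpha'}$, the cofinality being exactly what the product structure $S=S'\times T$ provides. Write $P:=\tau^{-1}(\geq\alpha')$ and $P_0:=\tau^{-1}(\alpha')$, and regard $P$ as a category with a unique arrow $\gamma\to\delta$ whenever $\delta\geq\gamma$. The construction of $\cF_*$ preceding the lemma produces a covariant functor $D\colon P\to\Qcoh X'_{\alpha'}$, $\gamma\mapsto f'_{\tau(\gamma)\alpha'\,*}(\cF_{\gamma *}F_\gamma)$, whose transition maps are induced by the structure maps $\psi_{\delta\gamma}$ of $F$ (Remark~\ref{alternative-def}), and by definition $(\cF_*F)_{\alpha'}=\lim D$. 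Under $S=S'\times T$ we have $P=\{(s,t)\mid s\geq\alpha'\}$ and $P_0=\{\alpha'\}\times T$, so the goal becomes the identification $\lim_P D\cong\lim_{P_0}D|_{P_0}$ together with the observation that $D|_{P_0}$ is the claimed functor $\gamma\mapsto\cF_{\gamma *}F_\gamma$.

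First I would handle the simplification on the fibre. For $\gamma\in P_0$ one has $\tau(\gamma)=\alpha'$, so $f'_{\tau(\gamma)\alpha'}=f'_{\alpha'\alpha'}=\id$ and hence $D(\gamma)=\cF_{\gamma *}F_\gamma$. For $\gamma\leq\delta$ in $P_0$ the commuting square of Definition~\ref{tau-morphism}, with $\tau(\gamma)=\tau(\delta)=\alpha'$, gives $\cF_\gamma\circ f_{\delta\gamma}=\cF_\delta$, so the transition map $D(\gamma)\to D(\delta)$ is $\cF_{\gamma *}(\psi_{\delta\gamma})$ followed by the identification $\cF_{\gamma *}f_{\delta\gamma *}=\cF_{\delta *}$; thus $D|_{P_0}$ coincides with $\gamma\mapsto\cF_{\gamma *}F_\gamma$ and $\lim_{P_0}D|_{P_0}$ is literally the right-hand side of the lemma.

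The heart of the argument is the cofinality of $P_0$ in $P$, and here I would use the product order. For every $p=(s,t)\in P$ set $\iota(p):=(\alpha',t)\in P_0$; then $\iota(p)\leq p$, and $\iota(p)$ is the largest element of $\{q\in P_0\mid q\leq p\}$, since any $(\alpha',t')\leq(s,t)$ forces $t'\leq t$. Equivalently, the comma category $(P_0\downarrow p)$ has a terminal object, hence is nonempty and connected, so the inclusion $P_0\hookrightarrow P$ is initial and the restriction $\lim_P D\to\lim_{P_0}D|_{P_0}$ is an isomorphism. Rather than cite the abstract statement, I would spell out the inverse: given a compatible family $(y_\gamma)_{\gamma\in P_0}$, extend it by $x_p:=D(\iota(p)\to p)(y_{\iota(p)})$; compatibility over all of $P$ follows because $\iota$ is order preserving (as $p'\geq p$ forces $\pi_T(p')\geq\pi_T(p)$, hence $\iota(p)\leq\iota(p')$) and the arrows $\iota(p)\to\iota(p')\to p'$ and $\iota(p)\to p'$ agree in the poset $P$. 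Uniqueness of this extension gives injectivity of the restriction, and the formula gives surjectivity.

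The step I expect to be the main obstacle is precisely this last functoriality check: verifying that the family reconstructed from the fibre is compatible with the mixed transition maps of $D$ relating different $T$-levels sitting over different $S'$-levels, not merely with the maps internal to the fibre. This is exactly where the product hypothesis is indispensable — for a general order-preserving $\tau$ the fibre $\tau^{-1}(\alpha')$ need be neither cofinal in $\tau^{-1}(\geq\alpha')$ nor even nonempty below a given $p$, and the $f'$-pushforwards in the general formula cannot be discarded. The remaining ingredients are routine: the relevant limits exist because $S$ (hence $P$ and $P_0$) is finite, and the functor $D$ itself is supplied by the construction of $\cF_*$.
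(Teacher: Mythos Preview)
Your argument is correct: you have unpacked precisely the cofinality of the fibre $\tau^{-1}(\alpha')$ inside $\tau^{-1}(\geq\alpha')$ that the product structure provides, which is exactly what the paper relies on. The paper's own proof consists of the single sentence ``This is clear,'' so your approach is the intended one, just made explicit.
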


\begin{proof} This is clear.
\end{proof}

\begin{example} Let $S^\prime$ consist of a single element $\alpha
^\prime$ and $X_{\alpha ^\prime}=pt.$ Then for $F\in D(\cX)$
$$\bR ^i\cF _*F=H^i(\cX ,F).$$
\end{example}

\section{Categorical resolutions by smooth poset schemes}

Let $S$ be a (finite) poset and let $\cX$ be a smooth ($S-$)poset
scheme (so that the category $D(\cX)$ is smooth by Theorem
\ref{smooth=smooth}). Let $Y$ be a scheme (which can be considered
as a poset scheme) and $\pi :\cX \to Y$ be a morphism of poset
schemes (i.e. a $\tau$-morphism for $\tau :S\to pt,$ in the
terminology of the previous section).

\begin{defi} \label{def-cat-res-by-poset-schemes} We call the morphism $\pi$ a {\rm categorical
resolution}  of $Y$ if the functor $\bL \pi ^*:D(Y)\to D(\cX)$ is a
categorical resolution, i.e. its restriction $\bL \pi ^*:\Perf
(Y)\to \Perf (\cX)$ is full and faithful.
\end{defi}

We can localize the morphism $\pi$ over $Y$ in the obvious way.
Namely, given an open subset $W\subset Y$ we denote by $\cX _W$ the
poset scheme which is the unverse image of $W$ under $\pi.$ Let $\pi
_W:\cX _W\to W$ be the corresponding morphism. If $W$ is affine
$W=Spec B,$ then the $B$-module $\bR ^j(\pi _W)_*\cO _{\cX _W}$ is
isomorphic to $H^j(\cX _W,\cO _{\cX _W}).$

\begin{prop} \label{criterion-cat-res} Let $\cX$ be a smooth poset scheme, Y be a scheme and $\pi
:\cX \to Y$ be a morphism. The following statements are equivalent.

1) $\pi$ is a categorical resolution;

2) the adjunction morphism $\cO _Y\to \bR \pi _*\cO _{\cX}$ is a
quasi-isomorphism;

3) for each affine open set $W\subset Y$ the map $H^0(W,\cO _W)\to
H^0(\cX _W,\cO_{\cX _W})$ is an isomorphism and $H^j(\cX _W,\cO_{\cX
_W})=0$ for $j>0.$
\end{prop}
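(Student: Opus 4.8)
The plan is to prove $1)\Leftrightarrow 2)$ using the adjunction $(\bL\pi^*,\bR\pi_*)$ together with the projection formula, and then $2)\Leftrightarrow 3)$ by unwinding $\bR\pi_*$ locally. The central fact is that, because $\bL\pi^*\cO_Y=\cO_{\cX}$ (Example \ref{pullback-differentials}), for any $P\in\Perf(Y)$ the adjunction counit fits into a comparison between $\bL\pi^*$ being fully faithful on $P$ and the structure-sheaf statement in $2)$.

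For $2)\Rightarrow 1)$, I would proceed as follows. Full faithfulness of $\bL\pi^*$ on perfect objects means that for all $P,Q\in\Perf(Y)$ the natural map
\[
\bR\Hom_Y(P,Q)\lto \bR\Hom_{\cX}(\bL\pi^*P,\bL\pi^*Q)
\]
is an isomorphism. By adjunction the right-hand side is $\bR\Hom_Y(P,\bR\pi_*\bL\pi^*Q)$, so it suffices to show that the unit $Q\to\bR\pi_*\bL\pi^*Q$ is a quasi-isomorphism for every perfect $Q$. Since $\bL\pi^*$ is monoidal with $\bL\pi^*\cO_Y=\cO_{\cX}$, the projection formula gives $\bR\pi_*\bL\pi^*Q\simeq Q\stackrel{\bL}{\otimes}\bR\pi_*\cO_{\cX}$; under hypothesis $2)$ this is $Q\stackrel{\bL}{\otimes}\cO_Y\simeq Q$, and one checks the identification of this map with the unit. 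This reduces everything to the single object $\cO_Y$. The converse $1)\Rightarrow 2)$ is the specialization of full faithfulness to $P=Q=\cO_Y$: it yields that $\cO_Y\to\bR\pi_*\bL\pi^*\cO_Y=\bR\pi_*\cO_{\cX}$ induces isomorphisms on all $\bR\Hom(\cO_Y,-)$, hence is a quasi-isomorphism.

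For the equivalence $2)\Leftrightarrow 3)$, I would argue locally. The statement $2)$ is local on $Y$, so I may restrict to an affine $W=\Spec B\subset Y$ and to $\pi_W:\cX_W\to W$, where $\bR\pi_*\cO_{\cX}|_W\simeq\bR(\pi_W)_*\cO_{\cX_W}$. As noted just before the proposition, $\bR^j(\pi_W)_*\cO_{\cX_W}\cong H^j(\cX_W,\cO_{\cX_W})$ as a $B$-module. Thus the quasi-isomorphism $\cO_W\to\bR(\pi_W)_*\cO_{\cX_W}$ is equivalent to the two conditions that $B=H^0(W,\cO_W)\to H^0(\cX_W,\cO_{\cX_W})$ be an isomorphism and that $H^j(\cX_W,\cO_{\cX_W})=0$ for $j>0$, which is exactly $3)$. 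Conversely $3)$ for all affine $W$ assembles to $2)$ because the cone of the adjunction map, being a complex of quasi-coherent sheaves on $Y$, is acyclic iff it is acyclic on an affine cover.

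The main obstacle will be the reduction in $2)\Rightarrow 1)$ from arbitrary perfect objects to $\cO_Y$ via the projection formula. One must justify that the projection formula $\bR\pi_*\bL\pi^*Q\simeq Q\stackrel{\bL}{\otimes}\bR\pi_*\cO_{\cX}$ holds in this poset-scheme setting and that the resulting isomorphism is compatible with the adjunction unit—this is where the functorial formalism for $\bR\pi_*$ developed in the previous section (computing $\bR\pi_*$ as a limit over the fibers of $\tau$, and the behavior of $\bL\pi^*$ on h-flats) does the real work. Once the projection formula and its compatibility with the unit are in hand, the rest is the routine adjunction and local bookkeeping sketched above.
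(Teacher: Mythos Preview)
Your overall architecture is right and matches the paper's for $2)\Leftrightarrow 3)$, but there are two points to flag.

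\textbf{A gap in $1)\Rightarrow 2)$.} Specializing full faithfulness to $P=Q=\cO_Y$ only gives that $\bR\Hom(\cO_Y,\cO_Y)\to\bR\Hom(\cO_Y,\bR\pi_*\cO_{\cX})$ is an isomorphism, i.e.\ an isomorphism on global cohomology. That does \emph{not} force $\cO_Y\to\bR\pi_*\cO_{\cX}$ to be a quasi-isomorphism when $Y$ is not affine. The fix is immediate: keep $Q=\cO_Y$ but let $P$ range over all of $\Perf(Y)$. Full faithfulness then says $\bR\Hom(P,\cO_Y)\to\bR\Hom(P,\bR\pi_*\cO_{\cX})$ is an isomorphism for every perfect $P$; since $\Perf(Y)$ contains a compact generator of $D(Y)$, this detects the unit as a quasi-isomorphism. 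This is exactly what the paper does, packaged as a tiny abstract lemma: for an adjoint pair $(F,G)$ and a fixed $B$, the map $F:\Hom(A,B)\to\Hom(FA,FB)$ is an isomorphism for all $A$ iff the unit $B\to GF(B)$ is an isomorphism.

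\textbf{A different route for $2)\Rightarrow 1)$.} You reduce the unit $Q\to\bR\pi_*\bL\pi^*Q$ to the case $Q=\cO_Y$ via the projection formula $\bR\pi_*\bL\pi^*Q\simeq Q\stackrel{\bL}{\otimes}\bR\pi_*\cO_{\cX}$, and you correctly flag that this formula (and its compatibility with the unit) must be established in the poset-scheme setting. The paper sidesteps this entirely: once one knows (from the same abstract lemma) that $1)$ is equivalent to ``the unit $K\to\bR\pi_*\bL\pi^*K$ is an isomorphism for every $K\in\Perf(Y)$,'' the paper observes that this statement is \emph{local on $Y$}, and on an affine open any perfect complex is a finite iterated extension of shifted copies of the structure sheaf, so one is reduced to $K=\cO_Y$, i.e.\ to $2)$. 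This localization argument is shorter and avoids having to justify a projection formula for morphisms of poset schemes; your approach would also work once that formula is in hand, but it is doing more than necessary here.
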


\begin{proof} The equivalence 2)$\Leftrightarrow$3) is clear. It
remains to prove the equivalence 1)$\Leftrightarrow$2).

\begin{lemma}\label{adjunction-categories} Let $\cC,\cD$ be categories, $F:\cC \to \cD$ a
functor and $G:\cD \to \cC$ its right adjoint functor. Fix an object
$B\in \cC$. Then the following assertions are equivalent

a) For any object $A\in \cC$ the map $F:\Hom (A,B)\to \Hom
(F(A),F(B))$ is an isomorphism;

b) The adjunction morphism $I_B:B\to GF(B)$ is an isomorphism.
\end{lemma}

\begin{proof} The composition of the map $\Hom
(A,B)\stackrel{F}{\to}\Hom (F(A),F(B))$ with the canonical
isomorphism $\Hom (F(A),F(B))\simeq \Hom (A,GF(B))$ is equal to the
map $(I_B)_*:\Hom (A,B)\to \Hom (A, GF(B)).$
\end{proof}

Now we can prove the equivalence 1)$\Leftrightarrow$2).

Since the functor $\bL \pi ^*:D(Y)\to D(\cX)$ preserves direct sums
and perfect complexes (i.e. compact objects) it is easy to see that
it is full and faithful if and only if its restriction to the
subcategory $\Perf (Y)$ is such. (Use the fact that $D(Y)$ is the
smallest triangulated subcategory of $D(Y)$ which contains $\Perf
(Y)$ and is closed under direct sums.) Hence by Lemma
\ref{adjunction-categories} the functor $\bL \pi ^*:\Perf (Y)\to
\Perf (\cX)$ is full and faithful if and only if for every $K\in
\Perf (Y)$ the adjunction map $K\to \bR \pi _*\bL \pi ^*K$ is an
isomorphism. But the last assertion is local on $Y,$ and locally $K$
is isomorphic to a finite direct sum of shifted copies of the
structure sheaf.
\end{proof}

We give examples of categorical resolutions by smooth poset schemes
in Section \ref{examples} below.

\section{How to compute in $D(\cX)$}\label{how-to-compute}

The restriction of an h-injective object $I\in D(\cX)$ to $X_\alpha
\in \cX$ may not be h-injective.

\begin{example} $\cX =\{\pt \to \bbA^1\}$
and $I=j_*(k),$ where $j$ is the inclusion of the point $\pt$ in
$\cX.$ Then the object $I\in Qcoh\cX$ is injective, hence
h-injective as an object in $D(\cX),$ but its restriction to $\bbA
^1$ is not.
\end{example}

Nevertheless if $I\in D(\cX)$ is h-injective, then the object
$I_\alpha \in D(X_{\alpha})$ can be used to compute $\bR \Hom
(M,-),$ if $M\in D(X_{\alpha})$ is h-flat.

\begin{lemma} \label{poset-maps-to-h-inj} Let $I\in D( \cX)$ be h-injective. Fix $\alpha \in
S$ and let $M\in D(X_{\alpha})$ be h-flat. Then the complex $\Hom
(M,I_{\alpha})$ is quasi-isomorphic to $\bR \Hom (M,I_{\alpha}).$
\end{lemma}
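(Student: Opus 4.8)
The plan is to read the lemma off from the derived adjunction $(\bL j_{\alpha +}, j^*_\alpha)$ of Lemma \ref{adjunction}, applied in full generality and then specialized using the two hypotheses. Concretely, I would run through exactly the chain of identifications appearing in the proof of Lemma \ref{adjunction}, observing that for a \emph{general} h-injective $I$ and h-flat $M$ every equality in that chain still holds, with no need to reduce $I$ to a special h-injective.

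First I would note that the underived adjunction $(j_{\alpha +}, j^*_\alpha)$ between $\cM _\alpha$ and $\cM$ is natural, and therefore lifts degreewise to an isomorphism of Hom-complexes $\Hom _{X_\alpha}(M, I_\alpha) = \Hom _{\cX}(j_{\alpha +}M, I)$, where $I_\alpha = j^*_\alpha I$. Since $I$ is h-injective in $C(\cX)$, this naive complex already represents the derived Hom, i.e. $\Hom _{\cX}(j_{\alpha +}M, I) = \bR \Hom _{\cX}(j_{\alpha +}M, I)$. Because $M$ is h-flat, the object $j_{\alpha +}M$ is h-flat as well (as recorded in the discussion preceding the notion of special h-flats), so $j_{\alpha +}M \simeq \bL j_{\alpha +}M$ and hence $\bR \Hom _{\cX}(j_{\alpha +}M, I) \simeq \bR \Hom _{\cX}(\bL j_{\alpha +}M, I)$. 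Finally, the adjunction of Lemma \ref{adjunction} identifies this last object with $\bR \Hom _{X_\alpha}(M, j^*_\alpha I) = \bR \Hom _{X_\alpha}(M, I_\alpha)$. Composing the four steps yields the desired quasi-isomorphism $\Hom _{X_\alpha}(M, I_\alpha) \simeq \bR \Hom _{X_\alpha}(M, I_\alpha)$.

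I do not expect a genuine obstacle; the content is essentially bookkeeping. The one point to watch is that the four isomorphisms be assembled into the single canonical comparison map $\Hom _{X_\alpha}(M, I_\alpha) \to \bR \Hom _{X_\alpha}(M, I_\alpha)$, rather than merely matched as abstract objects of $D(X_\alpha)$; naturality of the chain-level adjunction, of the h-injective identification, and of Lemma \ref{adjunction} guarantees this. I would also emphasize that h-flatness of $M$ is used in an essential way, since it is what permits the replacement of $\bL j_{\alpha +}M$ by $j_{\alpha +}M$, and that the example $\cX = \{\pt \to \bbA ^1\}$ shows $I_\alpha$ itself need not be h-injective, so the statement cannot be obtained by resolving on $X_\alpha$ alone.
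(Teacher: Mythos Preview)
Your proposal is correct and is essentially the argument the paper has in mind: the paper's own proof simply says ``A proof of this lemma is contained in the proof of Lemma \ref{adjunction} above,'' and the chain of identifications you write out is exactly that argument, now read with the derived adjunction $(\bL j_{\alpha +},j^*_\alpha)$ of Lemma \ref{adjunction} taken as established so that no reduction to special h-injectives is needed.
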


\begin{proof} A proof of this lemma is contained in the proof of
Lemma \ref{adjunction} above.
\end{proof}

\begin{lemma} (a) Fix $\alpha \in S$ and  let $F\in D(\cX)$
be such that $F=j_{\alpha +}F_{\alpha}$ for an h-flat $F_{\alpha}\in
D(X_{\alpha}).$ Then for any $G\in D(\cX)$ we have
$$\Hom _{D(\cX)}(F,G)=\Hom
_{D(X_{\alpha})}(F_{\alpha},G_{\alpha}).$$

(b) Suppose that $\alpha \in S$ is the unique minimal element of
$S,$ i.e. $S=U_{\alpha}$ (Subsection \ref{operations}). Then for any
$G\in D(\cX)$
$$H^\bullet(\cX ,G)=H^\bullet (X_{\alpha},G_{\alpha}).$$
\end{lemma}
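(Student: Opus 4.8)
The plan is to prove both parts by exploiting the adjunction $(\bL j_{\alpha +}, j^*_\alpha)$ established in Lemma \ref{adjunction}, together with the explicit description of the functor $j_{\alpha +}$ given in Subsection \ref{operations}.

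For part (a), I would first observe that by hypothesis $F = j_{\alpha +}F_\alpha$ with $F_\alpha$ h-flat, so $F$ is already (quasi-isomorphic to) $\bL j_{\alpha +}F_\alpha$, since $j_{\alpha +}$ preserves h-flats and applied to an h-flat object computes its own left derived functor. Then I would simply compute
$$
\Hom_{D(\cX)}(F,G) = \Hom_{D(\cX)}(\bL j_{\alpha +}F_\alpha, G)
= \Hom_{D(X_\alpha)}(F_\alpha, j^*_\alpha G)
= \Hom_{D(X_\alpha)}(F_\alpha, G_\alpha),
$$
where the middle equality is the adjunction of Lemma \ref{adjunction} and the last is just the identification $j^*_\alpha G = G_\alpha$. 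This is essentially a one-line consequence of the adjunction once I note that $F$ is already derived; the only subtlety to verify is that no further resolution of $F$ is needed, which is exactly the content of $F_\alpha$ being h-flat.

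For part (b), I would first unwind the hypothesis $S = U_\alpha$. Since $U_\alpha = \{\beta \in S \mid \beta \geq \alpha\}$, the assumption says every element of $S$ is $\geq \alpha$, so $\alpha$ is the unique minimal element. The key structural observation is that in this case the structure sheaf $\cO_\cX$ coincides with $j_{\alpha +}\cO_{X_\alpha}$: indeed $(j_{\alpha +}\cO_{X_\alpha})_\beta = f^*_{\beta\alpha}\cO_{X_\alpha} = \cO_{X_\beta}$ for every $\beta \geq \alpha$ (and there are no other $\beta$), with the structure maps being the canonical isomorphisms, which matches Definition of $\cO_\cX$. Moreover $\cO_{X_\alpha}$ is h-flat, so $\cO_\cX = j_{\alpha +}\cO_{X_\alpha}$ satisfies the hypothesis of part (a). Then, using the definition $H^\bullet(\cX, G) = \bR^\bullet\Hom(\cO_\cX, G)$, I would apply part (a) with $F = \cO_\cX$ and $F_\alpha = \cO_{X_\alpha}$ to conclude
$$
H^\bullet(\cX, G) = \Hom_{D(\cX)}(\cO_\cX, G[\bullet]) = \Hom_{D(X_\alpha)}(\cO_{X_\alpha}, G_\alpha[\bullet]) = H^\bullet(X_\alpha, G_\alpha).
$$

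The part I expect to require the most care is verifying that $\cO_\cX = j_{\alpha +}\cO_{X_\alpha}$ as objects of $\cM$ in part (b): this hinges on the fact that $f^*_{\beta\alpha}\cO_{X_\alpha} \cong \cO_{X_\beta}$ canonically and that the cocycle data match up, which uses that $\alpha$ lies below every $\beta$ so that $U_\alpha = S$ and the support condition $\supp(j_{\alpha +}\cO_{X_\alpha}) \subset U_\alpha$ fills all of $S$. Everything else is a formal application of adjunction. I would be slightly cautious about the indexing $G[\bullet]$ versus the cohomological grading, but this is routine bookkeeping and causes no real obstacle.
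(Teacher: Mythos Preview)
Your proposal is correct and follows exactly the paper's approach: part (a) is the adjunction $(\bL j_{\alpha+}, j^*_\alpha)$ from Lemma \ref{adjunction}, and part (b) is the observation that $\cO_{\cX} = j_{\alpha+}\cO_{X_\alpha}$ when $S = U_\alpha$. The paper's proof is a two-line version of what you wrote; your additional care in checking that $j_{\alpha+}F_\alpha$ already computes $\bL j_{\alpha+}F_\alpha$ for h-flat $F_\alpha$, and that $(j_{\alpha+}\cO_{X_\alpha})_\beta = f^*_{\beta\alpha}\cO_{X_\alpha} = \cO_{X_\beta}$, is exactly the unwinding the paper leaves implicit.
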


\begin{proof} By Lemma \ref{adjunction} the functors $(\bL j_{\alpha
+},j_{\alpha}^*)$ are adjoint, which implies (a). Now (b) follows
because $\cO _{\cX}=j_{\alpha +}\cO _{X_{\alpha}}.$
\end{proof}

The next proposition generalizes the last lemma.

\begin{prop}\label{prop-comput}
Suppose that a complex $F\in C(\cX)$ has a resolution (in $C(\cX)$)
\begin{equation} \label{resolution} 0\to K_{n}\to ...\to K_1 \to
K_0\to F\to 0\end{equation} where for each $i,$ $K_i=\oplus
_{\alpha}j_{\alpha +}M^i_{\alpha}$ with $M_{\alpha }^i\in
C(X_{\alpha})$ being h-flat.  Let $I\in C(\cX)$ be such that for
each $\alpha \in S$ and each $i,$ $\Hom (M^i_{\alpha},
I_{\alpha})=\bR \Hom (M^i_{\alpha}, I_{\alpha})$ (for example $I$ is
 h-injective as in Lemma \ref{poset-maps-to-h-inj}).
 Then the complex $\bR \Hom (F,I)$ is
quasi-isomorphic to the total complex of the double complex
\begin{equation}\label{standard-double-complex} 0\to \Hom (K_0,I)\to
\Hom (K_1,I) \to ... \to \Hom  (K_n,I)\to 0.\end{equation}
 Moreover,
for each $i$
$$\Hom  (K_i,I)=\bigoplus _{\alpha}\Hom (\bL j_{\alpha *}M^i_{\alpha},I)=
\bigoplus _{\alpha }\Hom (M_{\alpha}^i,I_{\alpha})= \bigoplus
_{\alpha }\bR \Hom (M_{\alpha}^i,I_{\alpha}).$$
 Hence
in particular we obtain a spectral sequence which converges to $\Ext
(F,I)$ with the $E_1$-term being the sum of groups $\Ext
(M^i_{\alpha},I_{\alpha})$.
\end{prop}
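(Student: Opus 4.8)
The plan is to reduce the computation of $\bR\Hom(F,I)$ to the naive Hom-complex of the finite resolution (\ref{resolution}), using the termwise hypothesis to bypass an h-injective resolution of $I$ in the first variable.

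First I would record the two structural facts about the resolving objects. Since each $M^i_\alpha$ is h-flat, the object $j_{\alpha +}M^i_\alpha$ is h-flat (as noted before the definition of special h-flats) and represents $\bL j_{\alpha +}M^i_\alpha$; being a finite direct sum, each $K_i$ is h-flat as well. Moreover the adjunction $(j_{\alpha +},j^*_\alpha)$ of abelian categories extends termwise to an isomorphism of Hom-complexes
$$\Hom(j_{\alpha +}M^i_\alpha,I)\cong\Hom(M^i_\alpha,j^*_\alpha I)=\Hom(M^i_\alpha,I_\alpha),$$
and since $S$ is finite, $\Hom(K_i,I)=\bigoplus_\alpha\Hom(j_{\alpha +}M^i_\alpha,I)=\bigoplus_\alpha\Hom(M^i_\alpha,I_\alpha)$. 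Combined with the hypothesis $\Hom(M^i_\alpha,I_\alpha)=\bR\Hom(M^i_\alpha,I_\alpha)$, this yields the entire chain of identifications in the ``Moreover'' clause and shows in particular (using Lemma \ref{adjunction} to compute $\bR\Hom(\bL j_{\alpha +}M^i_\alpha,I)=\bR\Hom(M^i_\alpha,I_\alpha)$) that $\Hom(K_i,I)$ already computes $\bR\Hom(K_i,I)$.

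For the main assertion I would choose an h-injective resolution $I\to J$ (available by Lemma \ref{Groth}), so that $\bR\Hom(F,I)=\Hom(F,J)$. Because (\ref{resolution}) is an exact sequence of complexes, its augmented total complex is acyclic, hence $\Tot(K_n\to\cdots\to K_0)\to F$ is a quasi-isomorphism; applying $\Hom(-,J)$, which preserves quasi-isomorphisms since $J$ is h-injective, gives $\Hom(F,J)\simeq\Tot\Hom(K_\bullet,J)$. It then remains to replace $J$ by $I$: for each fixed $i$ both $\Hom(K_i,J)$ and $\Hom(K_i,I)$ compute $\bR\Hom(K_i,I)$ — the former because $J$ is h-injective, the latter by the previous paragraph — so the two double complexes $\Hom(K_\bullet,J)$ and $\Hom(K_\bullet,I)$ agree columnwise up to quasi-isomorphism. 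Since the resolution has finite length, this double complex has only $n+1$ columns, and a columnwise quasi-isomorphism of finite double complexes induces a quasi-isomorphism on totalizations; thus $\Tot\Hom(K_\bullet,J)\simeq\Tot\Hom(K_\bullet,I)$, proving $\bR\Hom(F,I)\simeq\Tot\Hom(K_\bullet,I)$. The spectral sequence is then the column-filtration spectral sequence of (\ref{standard-double-complex}), whose $E_1$-term is the cohomology of the columns, namely $\bigoplus_\alpha\Ext(M^i_\alpha,I_\alpha)$, converging to $\Ext(F,I)$.

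The main obstacle I anticipate is precisely the last step, the passage from the h-injective $J$ (which legitimately computes $\bR\Hom$) back to the given, unresolved $I$: one must verify that the termwise hypothesis, together with the finiteness of the resolution, genuinely suffices to upgrade a columnwise quasi-isomorphism of double complexes to a quasi-isomorphism of totalizations. Everything else is formal adjunction plus the standard acyclicity of the total complex of a finite exact sequence of complexes.
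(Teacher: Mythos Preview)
Your proof is correct and follows the same approach as the paper, which simply says ``This follows from Lemma \ref{adjunction} and Lemma \ref{poset-maps-to-h-inj}'' and leaves the totalization argument to the reader. You have spelled out exactly the intended details: the abelian adjunction $(j_{\alpha +},j^*_\alpha)$ together with the hypothesis identifies each column $\Hom(K_i,I)$ with $\bR\Hom(K_i,I)$, and the passage from the h-injective $J$ back to $I$ via a columnwise quasi-isomorphism of a finite double complex is standard (and your flagged ``obstacle'' is not one---the map $I\to J$ gives an honest morphism of double complexes, so the usual finite-filtration argument applies).
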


\begin{proof} This follows from Lemma \ref{adjunction} and
Lemma \ref{poset-maps-to-h-inj}.
\end{proof}

The following example will be of primary interest to us.

\begin{example}\label{example} In case $F=\cO _{\cX}$ one can take a
resolution \ref{resolution} with $K_i=\oplus _{\alpha }j_{\alpha
+}\cO _{X_{\alpha}},$ i.e. $M^i_\alpha=\cO _{X_{\alpha}}.$ (The same
index $\alpha$ may appear in different $K_i$'s and it may also
appear more than once in a given $K_i.$) Given $G\in D(\cX)$ choose
its h-injective replacement I. Then the double complex
\ref{standard-double-complex}  consists of sums of spaces $\Gamma
(X_{\alpha},I_{\alpha})$ and the $E_1$-term is the sum of groups
$H^\bullet (X_{\alpha },G_\alpha).$ The differential $d_1$ between
the cohomology groups is simply the sum of the maps induced by the
structure morphisms $\phi _{\alpha \beta}:f^*_{\alpha \beta}G_\beta
\to G_\alpha.$  In particular $d_1$  preserves the degree of the
cohomology groups $H^\bullet (X_{\alpha },G_\alpha).$

In case the complex $G\in D(\cX)$ is bounded below we can use
instead of an h-injective $I$ the canonical Godement resolution
$G\to \cC ^\bullet (G),$ such that for each $\alpha $ the complex
$\cC ^\bullet (G)$ consists of flabby sheaves. Notice that the
complex $\cC ^\bullet(G)$ consists of $\cO _{\cX}$-modules which are
no longer quasi-coherent (see Section \ref{variants} below).
\end{example}

\begin{defi}\label{standard-sp-seq} We call any spectral sequence
converging to $H^\bullet (\cX ,G)$ as in the above example a {\rm
standard} one. (It is not unique because one can choose different
resolutions \ref{resolution} of $\cO _{\cX}$.)
\end{defi}

\begin{example} Assume that a poset $S$ consists of 4 elements $\{
\alpha ,\beta _1,\beta _2,\beta _3\}$ where $\alpha \ge \beta _i$
for all $i$ and no other relations. Therefore we have 4 irreducible
open subsets $U_\alpha ,U_{\beta _i}\subset S.$ If $\cX$ is an
$S$-scheme one can take for example the following resolution
\ref{resolution} of the structure sheaf $\cO _{\cX}$:
$$0\to K_1\to K_0\to \cO _{\cX} \to 0,$$
where $K_0=\oplus _ij_{\beta _i+}\cO _{X_{\beta _i}}$ and $K_1=
(j_{\alpha +}\cO _{X_\alpha})^{\oplus 2}.$ This gives a standard
spectral sequence converging to $H^\bullet (\cX ,\cO _{\cX})$ with
the $E_1$-complex
$$\bigoplus _iH^\bullet (X_{\beta _i},\cO _{X_{\beta _i}})\to
H^\bullet (X_\alpha ,\cO _{\cX _\alpha })^{\oplus 2}.$$
\end{example}

\part{Poset schemes and Du Bois singularities}

\section{Other variants of poset ringed spaces}\label{variants}

Besides poset schemes and quasi-coherent sheaves on them  we can
consider "poset" versions of other usual structures. We give some
examples which will be used later. Let $\cX$ be a poset scheme.

1) One may define an abelian category $\Mod \cO _{\cX}$ just as we
defined $Qcoh \cX$ by requiring the sheaves $F_\alpha $ to be
arbitrary $\cO _{X_{\alpha}}$-modules and not necessarily
quasi-coherent ones. Moreover we may consider the abelian category
$\Sh(\cX)$ of sheaves of abelian groups on $\cX.$ (That is we
consider each $X_\alpha$ as a ringed space with the structure sheaf
$\bbZ _{X_\alpha},$ so that the gluing is by maps $\phi ^\prime
_{\alpha \beta}:f^{-1}_{\alpha \beta}F_\beta \to F_\alpha.$) Because
of the natural morphism $f_{\alpha \beta}^{-1}F_\beta \to f_{\alpha
\beta}^*F_\beta$ each object in $\Mod \cX$ defines an object of $\Sh
(\cX).$

2) Denote by $\cX ^{\et}$ the same diagram of schemes where we
consider each $X_\alpha$ with the etale topology. Let $\Sh (\cX
^{\et})$ denote the abelian category of sheaves of abelian groups on
$\cX.$ For a prime number $l$ and $n\geq 1$ let $\Sh _{l^n} (\cX
^{\et})\subset \Sh (\cX ^{\et})$ be the full subcategory of $\bbZ
/l^n$-modules.

3) If $\cX$ is a complex poset scheme of finite type we may consider
the corresponding poset analytic space $\cX ^{\an}.$ It comes with
the structure sheaf $\cO _{\cX ^{\an}}.$  (We will be interested in
$\cX ^{\an}$ only for projective $\cX.$) Again we denote by $\Sh
(\cX ^{\an})$ the abelian category of sheaves of abelian groups on
$\cX ^{\an}.$ As in the algebraic case, a sheaf of $\cO _{\cX
^{\an}}$-modules may be considered as an element of $\Sh (\cX
^{\an}).$ In particular the analytic deRham complex $\Omega ^\bullet
_{\cX ^{\an}}$ is a complex in $\Sh (\cX ^{\an})$ which is a
resolution of the constant sheaf $\bbC _{\cX ^{\an}}.$

All the functors defined in Section \ref{operations} for
quasi-coherent sheaves exist also in the categories described in
1),2),3) above. They have all the properties listed in Subsection
\ref{summary-1}.

\begin{lemma} \label{enough-injectives}
There are enough injectives in all the above categories $\Mod \cO
_{\cX},$ $\Sh(\cX),$ $\Sh (\cX ^{\et}),$ $\Sh _{l^n} (\cX ^{\et}),$
$\Sh (\cX ^{\an}),$ etc.
\end{lemma}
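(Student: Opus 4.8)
The plan is to show that each of the listed categories is a Grothendieck category and then to invoke the standard fact --- already used in Lemma \ref{Groth} --- that a Grothendieck category has enough injectives (\cite{KaSch}). Write $\cC$ for any one of $\Mod \cO_{\cX}$, $\Sh(\cX)$, $\Sh(\cX^{\et})$, $\Sh_{l^n}(\cX^{\et})$, $\Sh(\cX^{\an})$. Each such $\cC$ has exactly the formal shape of $Qcoh\cX$: an object is a collection $F=\{F_\alpha,\varphi_{\alpha\beta}\}$, where $F_\alpha$ lies in the corresponding fiber category $\cC_\alpha$ (namely $\Mod \cO_{X_\alpha}$, $\Sh(X_\alpha)$, $\Sh(X_\alpha^{\et})$, and so on) and the $\varphi_{\alpha\beta}$ are gluing maps along the right-exact functors $f^*_{\alpha\beta}$ (resp. the exact functors $f^{-1}_{\alpha\beta}$) satisfying the cocycle condition. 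As in Lemma \ref{abelian}, kernels and cokernels are formed componentwise --- cokernels being well defined because the gluing functors are right exact --- so $\cC$ is abelian.

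Next I would verify the Grothendieck axioms, repeating the proof of Lemma \ref{Groth} essentially verbatim. Arbitrary colimits exist and are computed componentwise, because each $\cC_\alpha$ is cocomplete and the gluing functors, being left adjoints, preserve colimits. Filtered colimits are exact, since exactness of a sequence in $\cC$ is tested in each $\cC_\alpha$, where filtered colimits are exact. Finally $\cC$ has a generator: each fiber category $\cC_\alpha$ is Grothendieck, hence has a generator $M_\alpha$, and using the functor $j_{\alpha +}$ together with the adjunction $(j_{\alpha +},j^*_\alpha)$ --- which, by the remark preceding the lemma, exist in $\cC$ with the properties of Subsection \ref{summary-1} --- one sets $M:=\oplus_\alpha j_{\alpha +}M_\alpha$. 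The identity $\Hom(M,-)=\oplus_\alpha \Hom(M_\alpha,(-)_\alpha)$ then forces any morphism inducing an isomorphism on $\Hom(M,-)$ to be an isomorphism on each component, hence an isomorphism in $\cC$, so $M$ generates $\cC$.

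The one input that requires care --- and is the only real content beyond transcribing Lemma \ref{Groth} --- is that each fiber category $\cC_\alpha$ is itself a Grothendieck category with a generator. For $\Mod \cO_{X_\alpha}$ and for $\Sh(X_\alpha)$ this is classical. For $\Sh(X_\alpha^{\et})$ and its $\bbZ/l^n$-linear variant one uses that abelian sheaves on any site form a Grothendieck category: the small \'etale topos is a Grothendieck topos, and a generator is obtained from the free abelian (resp. $\bbZ/l^n$-module) sheaves on a generating family of the site. The analytic case $\Sh(X_\alpha^{\an})$ is again the topological-space case. With these classical facts in hand, and with $f^{-1}_{\alpha\beta}$ a left adjoint (to $f_{\alpha\beta *}$) that preserves colimits, the argument above goes through uniformly for every category in the list.
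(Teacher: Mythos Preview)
Your argument is correct: verifying that each of these poset categories is Grothendieck, by transporting the proof of Lemma \ref{Groth} to the new fiber categories, is a clean and complete way to obtain enough injectives. The only substantive input is that each fiber category $\cC_\alpha$ is Grothendieck, and you identify this correctly in each case.

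Note, however, that the paper does \emph{not} point to Lemma \ref{Groth}; it says the proof is ``essentially the same as the one of Proposition \ref{finite-inj-res}.'' That proposition builds injective resolutions by induction on $|S|$, peeling off a maximal element $\beta$ and using that the functors $i_*$, $j^*$, $j_{\alpha *}$ preserve injectives. The intended argument here is therefore a direct hands-on construction: given $F$, embed each $F_\alpha$ into an injective $I_\alpha$ in the fiber category and assemble an injective object of $\cC$ containing $F$ from the $j_{\alpha *}I_\alpha$'s (or, equivalently, run the open/closed induction as in Proposition \ref{finite-inj-res} and Lemma \ref{special-inj}). Your Grothendieck-category route is a genuinely different, more abstract path; it has the advantage of also yielding enough h-injectives in the associated category of complexes via \cite{KaSch}, while the paper's route stays closer to the explicit functorial machinery already developed and makes transparent exactly which injectives arise.
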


\begin{proof} The proof is essentially the same as the one of
Proposition \ref{finite-inj-res}.
\end{proof}

\begin{defi} Using the above lemma we may define for each bounded
below complex $L$ of sheaves in $\Sh (\cX ^?)$ its cohomology
$$H^\bullet (\cX ^?, L)=\Ext ^\bullet (\bbZ _{\cX ^?},L)$$
\end{defi}

Let $L$ is a bounded above complex of sheaves in one of the
categories in Lemma \ref{enough-injectives}. There is a spectral
sequence converging to $H^\bullet (\cX ^?,L)$ defined similarly to
Example \ref{example}. Namely, choose a resolution
$$0\to K_n\to ...\to K_0\to \bbZ _{\cX ^?}\to 0$$
where each $K_i$ is a direct sum of objects $j_{\alpha +}\bbZ
_{X_{\alpha}},$ which are extensions by zero from irreducible open
subsets $U_{\alpha}$ of the constant sheaf $\bbZ.$ Choose also an
injective resolution $L\to I.$ Then exactly as in Section
\ref{how-to-compute} we get a spectral sequence which converges to
$H^\bullet(\cX ,L).$ The $E_0$-term consists of sums of spaces
$$\Gamma (X_{\alpha},I_{\alpha})=\Hom(j_{\alpha +}\bbZ
_{X_{\alpha}},I_{\alpha})$$ and the $E_1$-term is the sum of
cohomologies $H^\bullet (X_{\alpha},L_{\alpha}).$

Notice that instead of an injective resolution $L\to I$ we could use
the canonical flabby Godement resolution $L\to G(L).$ (Since the
Godement resolution of usual sheaves is functorial it extends to
poset sheaves in $\Sh (\cX ^?).$)

\begin{defi} \label{standard-sp-seq-general}
As in the case of quasi-coherent sheaves (Definition
\ref{standard-sp-seq}) we call the above spectral sequence
converging to $H^\bullet (\cX ^?,L)$ a {\rm standard} one.
\end{defi}

\begin{remark} Assume that $L$ is a bounded below complex in $Qcoh
\cX .$ By comparing the corresponding standard spectral sequences we
conclude that the cohomology of $L$ is the same whether we consider
$L$ as a complex over $Qcoh \cX$ or over $\Sh (\cX).$
\end{remark}

\subsection{Poset GAGA}\label{GAGA}
Let $X$ be a complex projective variety, $X^{\an}$ - the
corresponding analytic space and $\iota :X^{\an}\to X$ the canonical
morphism of locally ringed spaces. For an $\cO _X$-module $F$ we
denote by $F^{\an}=\iota ^*F$ its analytization. By adjunction we
obtain the canonical morphism of sheaves $a_F:F\to \iota _*F^{\an}.$
Let $Y$ be another complex projective variety and $f:X\to Y$ be a
morphism. The adjunction morphism $a_F$ induces a morphism of
sheaves $\theta _F:(f_*F)^{\an}\to f_*^{\an}F^{\an}.$ If $F$ is
coherent then it is known by \cite{SGAI}, Expose XII, Th. 4.2 (which
is an extension of GAGA) that this morphism $\theta _F$ induces a
quasi-isomorphism $(\bR f_*F)^{\an}\to \bR f^{\an}_*F^{\an}.$ In
particular $H(X,F)=H(X^{\an},F^{\an})$ for a coherent sheaf $F.$

Let $S$ be a poset, let $\cX$ be a complex projective $S$-scheme,
and $F\in \Mod \cO _{\cX}.$ Again we denote by $F^{\an}$ - the
analytization of $F$ - the corresponding analytic sheaf on the poset
analytic space $\cX ^{\an}.$ The poset analogue of the adjunction
map $a_F$ above induces a morphism of the standard spectral
sequences for $H^\bullet (\cX ,F)$ and $H^\bullet (\cX
^{\an},F^{\an}).$ If $F\in coh \cX$ then it follows from the above
cited result in \cite{SGAI} that the induced morphism of $E_1$-terms
is an isomorphism. In particular for a coherent $F$ we have
$H^\bullet (\cX ,F)=H^\bullet (\cX ^{\an},F^{\an}).$ Moreover for
$F\in coh \cX$ the standard spectral sequence for $H^\bullet (\cX
,F)$ degenerates at $E_r$ for $r\geq 2$ if and only if the standard
spectral sequence for $H^\bullet (\cX ^{\an} ,F^{\an})$ degenerates
at $E_r.$ All the above holds also for bounded below complexes of
coherent sheaves on $\cX.$

Let $S^\prime $ be another poset and $\tau :S\to S^\prime$ - a map
of posets. Let $\cX ^\prime$ be a complex projective
$S^\prime$-scheme and $\cF :\cX \to \cX ^\prime$ - a $\tau$-morphism
(Definition \ref{tau-morphism}). Then for $F\in \coh \cX$ there is a
natural quasi-isomorphism of complexes of sheaves on $\cX ^{\prime
\an}$
$$(\bR \cF _*F)^{\an}\stackrel{\sim}{\longrightarrow}\bR
\cF^{\an}_*F^{\an}.$$ In particular, for the deRham complex $\Omega
^\bullet _{\cX}$ we have
$$(\bR \cF _*\Omega ^\bullet _{\cX})^{\an}\simeq \bR \cF^{\an}_*
\Omega^{\bullet}_{\cX ^{\an}}.$$

\section{Degeneration of the standard spectral sequence for
$H^\bullet (\cX ^{\an},\bbC)$ when $\cX$ is a smooth projective
poset scheme}

\begin{theo} \label{degeneration-standard} Let $\cX$ be a smooth complex
projective poset scheme. Then the standard spectral sequence
converging to $H^\bullet(\cX ^{\an},\bbC )=H^\bullet(\cX ^{\an},\bbC
_{\cX ^{\an}})$ degenerates at $E_2$ ($d_2=d_3=...0$). That is the
cohomology $H^\bullet(\cX ^{\an},\bbC )$ is isomorphic to the
cohomology of the complex
\begin{equation}\label{E-one}E_1=...\to \oplus H^\bullet
(X ^{\an}_{\beta},\bbC )\stackrel{\oplus f^*_{\alpha
\beta}}{\longrightarrow} \oplus H^\bullet (X ^{\an}_{\alpha},\bbC
)\to ...\end{equation}
\end{theo}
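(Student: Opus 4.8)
The plan is to exhibit the standard spectral sequence as the hypercohomology spectral sequence of a suitable double complex built from the de Rham complexes of the individual smooth varieties $X_\alpha^{\an}$, and then to run a Hodge-theoretic argument on that double complex. First I would replace the constant sheaf $\bbC_{\cX^{\an}}$ by the analytic de Rham complex $\Omega^\bullet_{\cX^{\an}}$, using the fact (noted in Section \ref{variants}) that $\Omega^\bullet_{\cX^{\an}}$ is a resolution of $\bbC_{\cX^{\an}}$ and that on each smooth $X_\alpha^{\an}$ the holomorphic Poincar\'e lemma gives $\bbC_{X_\alpha^{\an}}\simeq \Omega^\bullet_{X_\alpha^{\an}}$. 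So computing $H^\bullet(\cX^{\an},\bbC)$ is the same as computing the hypercohomology $\bbH^\bullet(\cX^{\an},\Omega^\bullet_{\cX^{\an}})$.

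\textbf{Building the double (triple) complex.} Next I would feed the de Rham complex into the standard resolution $0\to K_n\to\cdots\to K_0\to \bbZ_{\cX^{\an}}\to 0$ of Definition \ref{standard-sp-seq-general}, together with an injective (or Godement) resolution of $\Omega^\bullet_{\cX^{\an}}$. This produces a filtered complex whose associated spectral sequence is exactly the standard one, and whose $E_1$-page is the complex \eqref{E-one}: the poset differential $\oplus f^*_{\alpha\beta}$ records the contributions of the structure maps, while the internal de Rham differential computes each $H^\bullet(X_\alpha^{\an},\bbC)$. The key point to isolate is that there is a second filtration available, namely the \emph{Hodge} filtration coming from the stupid filtration $\sigma_{\geq p}\Omega^\bullet_{\cX^{\an}}$, which on each $X_\alpha^{\an}$ is the usual Hodge filtration. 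Since each $X_\alpha$ is smooth projective, the Hodge-to-de Rham spectral sequence of every $X_\alpha^{\an}$ degenerates at $E_1$ by classical Hodge theory.

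\textbf{The degeneration argument.} The heart of the proof is to leverage this fiberwise Hodge degeneration to force $d_2=d_3=\cdots=0$ in the poset direction. The cleanest route is a strictness argument: I would show that the poset differential $\oplus f^*_{\alpha\beta}$ is a morphism of \emph{mixed Hodge structures} (each $H^k(X_\alpha^{\an},\bbC)$ carries a pure Hodge structure of weight $k$, and the maps $f^*_{\alpha\beta}$ are pullbacks by morphisms of smooth projective varieties, hence morphisms of Hodge structures). Morphisms of Hodge structures are strict with respect to the Hodge filtration, and strictness propagates through the spectral sequence of the total complex. Concretely, one decomposes the $E_1$-complex \eqref{E-one} into its Hodge pieces $H^q(X_\alpha^{\an},\Omega^p)$; because the $f^*_{\alpha\beta}$ respect this bigrading and the total differential of the triple complex splits accordingly, the differentials $d_r$ for $r\geq 2$ must shift the Hodge type, and strictness shows they vanish. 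Equivalently, one compares the filtration by poset-degree with the Hodge filtration and invokes the two-filtration degeneration lemma (as in Deligne's formalism of mixed Hodge complexes).

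\textbf{Main obstacle.} The hard part will be making precise that the poset differential is genuinely a morphism of Hodge structures \emph{at the level of complexes}, not merely on cohomology, so that strictness can be invoked. The structure maps $\varphi_{\alpha\beta}\colon f^*_{\alpha\beta}\Omega^\bullet_\beta\to\Omega^\bullet_\alpha$ are only morphisms of de Rham complexes, and I would need to check that the induced maps on the $E_1$-page are strict for the Hodge filtration and compatible with the second filtration used to run the degeneration. The natural way to sidestep the potential incompatibility of a single global injective resolution with the Hodge filtration is to assemble everything into a mixed Hodge complex on the poset-indexed diagram, exactly as in \cite{LNM1335}, and then cite the general degeneration-at-$E_2$ statement for cohomological mixed Hodge complexes; verifying that our standard resolution produces precisely such a structure is the step that requires care.
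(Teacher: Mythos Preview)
Your approach is viable but takes a genuinely different route from the paper's.  The paper does \emph{not} use Hodge theory to prove Theorem~\ref{degeneration-standard}; instead it argues via Frobenius weights and the Weil conjectures, in the style of \cite{BBD}, Ch.~6.  The outline is: (1) replace $\bbC$ by $\bar{\bbQ}_l$ and pass from the analytic to the \'etale site (the standard spectral sequences are compared via the functor $\iota^*$ and Godement resolutions); (2) spread out the poset scheme over a discrete valuation ring with residue field $\bar{\bbF}_q$, comparing the spectral sequences of the generic and special fibers; (3) the geometric Frobenius acts on the entire spectral sequence for $\cX_s^{\et}$, and by Deligne's purity theorem it acts on $H^n(X_{\alpha,s}^{\et},\bar{\bbQ}_l)$ with eigenvalues of absolute value $q^{n/2}$.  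Since $d_r$ for $r\ge 2$ maps a subquotient of $\bigoplus_\alpha H^t$ to a subquotient of $\bigoplus_\beta H^{t-r+1}$ with $t-r+1<t$, and $d_r$ commutes with Frobenius, it must vanish.

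The conceptual difference is this: Frobenius is an honest endomorphism of the poset scheme, so it automatically acts on every page of the spectral sequence and all differentials are Frobenius-equivariant.  The Hodge filtration is not induced by an operator, so to know that the higher $d_r$ respect weights you are forced (as you correctly identify in your ``Main obstacle'') to package everything as a cohomological mixed Hodge complex and invoke Deligne's two-filtration lemma.  That works---it is essentially the Hodge~III argument for simplicial varieties transported to an arbitrary finite poset---but it needs the careful setup you flag, and it is not what the paper does.  In fact the paper runs the implication in the opposite direction: the Hodge decomposition for $\cX^{\an}$ (Theorem~\ref{Hodge-to-deRham-degener-analytic}) is \emph{deduced from} Theorem~\ref{degeneration-standard}, not used to prove it.

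One small correction to your sketch: the reason $d_r$ vanishes for $r\ge2$ is not that it ``shifts the Hodge type'' but that it goes between pure structures of \emph{different weight} ($t$ versus $t-r+1$); that is the content of both the Frobenius and the Hodge arguments.
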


\begin{proof} We use Weil conjectures (Deligne's theorem)
\cite{De} to prove this. We follow the strategy of \cite{BBD},Ch.6
using canonical Godement flabby resolutions as in
\cite{FK},Ch.1,Sect.11,12. The argument has three steps: first we
pass from the analytic topology to the etale one, then pass to a
poset scheme over a finite field, and finally we use purity of the
Frobenius endomorphism on the etale $l$-adic cohomology of a smooth
projective scheme.

\medskip

\noindent{\bf Step 1.} Choose a prime number $l.$ Since the fields
$\bar{\bbQ}_l$ and
 $\bbC$ are isomorphic, it suffices to prove the degeneration
of the analogous spectral sequence for the cohomology groups
$H^\bullet(\cX ^{\an},\bar{\bbQ}_{l}).$

Let $Y$ be a complex scheme. We have the natural morphism of topoi
$\iota:Y ^{\an}\to Y ^{\et}.$ This morphism induces the inverse
image functor between the corresponding categories of abelian
sheaves $\iota ^*:\Sh (X^{\et})\to \Sh (X^{\an}).$ It has the
following properties \cite{FK},Ch.1,Prop.11.4.
\begin{itemize}
\item Given a morphism of schemes $f:X\to Y$ there is a natural isomorphism
of functors $f^{\an *}\cdot \iota _Y^*=\iota _X^*\cdot f^{\et *}.$
In particular, $\iota ^*$ is an exact functor.
\item For any point $y\in Y^{\an}$ and any $F\in \Sh (Y^{\et})$ the stalks
$F_y$ and $(\iota ^*F)_y$ are naturally isomorphic.
\item For a finite ring $R$ we have $\iota ^*(R_{Y^{\et}})=R_{Y^{\an}}$ and
it induces an isomorphism $H^\bullet (Y^{\an},R)=H^\bullet
(Y^{\et},R).$
\end{itemize}

Recall that the cohomology groups $H^\bullet(Y^{\et},\bar{\bbQ}_l)$
are defined as
$$H^\bullet(Y^{\et},\bar{\bbQ}_l):=
(\lim _{\leftarrow}H^\bullet(Y^{\et},\bbZ /l^n))\otimes _{\bbZ
_l}\bar{\bbQ}_l$$

It is known that the morphism $\iota$ induces an isomorphism $\iota
^*:H^\bullet (Y ^{\et},\bar{\bbQ}_l) \to H^\bullet (Y
^{\an},\bar{\bbQ}_l).$ We want to extend this result to poset
schemes.

Namely, let $\cX ^{\et}$ denote the poset scheme $\cX$ considered in
the etale topology. Similarly to the analytic case we define the
cohomology groups $H^\bullet (\cX ^{\et},\bbZ /l^n)=\Ext^\bullet
((\bbZ/l^n)_{\cX ^{\et}},(\bbZ/l^n)_{\cX ^{\et}})$ and
$$H^\bullet(\cX^{\et},\bar{\bbQ}_l):=
(\lim _{\leftarrow}H^\bullet(\cX^{\et},\bbZ /l^n))\otimes _{\bbZ
_l}\bar{\bbQ}_l$$ Again there is an obvious standard spectral
sequence converging to $H^\bullet(\cX^{\et},\bar{\bbQ}_l).$

The morphism of topoi $\iota$ induces the corresponding morphism
$\iota :\cX ^{\an}\to \cX ^{\et}$ and the functor $\iota ^*:\Sh (\cX
^{\et})\to \Sh (\cX ^{\an}).$

\begin{lemma}\label{analytic=etale}
The morphism of topoi $\iota:\cX ^{\an}\to \cX ^{\et}$ induces an
isomorphism
$H^\bullet(\cX^{\et},\bar{\bbQ}_l)=H^\bullet(\cX^{\an},\bar{\bbQ}_l).$
More precisely, there is a natural morphism of standard spectral
sequences converging to $H^\bullet (\cX^{\et},\bar{\bbQ}_l)$ and
$H^\bullet (\cX^{\an},\bar{\bbQ}_l)$ respectively, which induces an
isomorphism of the corresponding $E_1$-complexes.
\end{lemma}

\begin{proof} For each $\alpha \in S$ and $n\in \bbZ$ denote by
$(\bbZ /l^n)_{X_\alpha}\to G _{\alpha ,n}$ the canonical Godement
flabby resolution \cite{Go},\cite{FK},pp.129-130. Then naturally $G
_n= \{G _{\alpha n}\}$ is a complex in $\Sh (\cX ^{\et}).$ Moreover,
 $G_{n+1}\otimes _{\bbZ /l^{n+1}}\bbZ /l^n=G_n.$ The cohomology
$H^\bullet (\cX ^{\et},\bbZ /l^n)$ can be computed using the
resolution $G_n.$ In particular the standard spectral sequence
converging to $H^\bullet (\cX ^{\et},\bbZ /l^n)$ is defined by the
double complex $\Gamma (G_n)$ which consists of sums of groups
$\Gamma (X_\alpha ,G_n).$ These double complexes form an inverse
system
\begin{equation}\label{inverse-system} ...\to \Gamma (G_2)\to \Gamma (G_1)
\end{equation}
and the double complex
\begin{equation}\label{double-complex}
\lim _{\leftarrow}\Gamma (G_n)\otimes _{\bbZ _l}\bar{\bbQ }_l
\end{equation}
computes the cohomology $H^\bullet (\cX ^{\et},\bar{\bbQ }_l).$
Applying the functor $\iota ^*$ to the inverse system of complexes
$\{G_n\}$ provides the desider morphism of standard spectral
sequences for $H^\bullet(\cX^{\et},\bar{\bbQ}_l)$ and $
H^\bullet(\cX^{\an},\bar{\bbQ}_l)$ respectively. This morphism
induces an isomorphism of $E_1$-terms, because
$H^\bullet(X^{\et}_{\alpha},\bar{\bbQ}_l)=
H^\bullet(X^{\an}_{\alpha},\bar{\bbQ}_l)$ for each $\alpha.$
\end{proof}

So in order to prove the theorem it suffices to show the
degeneration of the standard spectral sequence for
$H^\bullet(\cX^{\et},\bar{\bbQ}_l).$

\medskip{\bf Step 2.} For any smooth complex scheme $Y$ we can find a
discrete valuation ring $V\subset \bbC$ whose residue field is the
algebraic closure of a finite field, and a smooth morphism $Y_V\to
SpecV,$ such that $Y$ is obtained by extension of scalars from
$Y_V.$ Let $Y_s$ be the closed fiber of $Y_V.$ We obtain the diagram
of schemes
$$Y\stackrel{u}{\longrightarrow}Y_V \stackrel{i}{\longleftarrow}Y_s.$$
These morphisms induce isomorphisms
$$H^\bullet (Y^{\et},\bar{\bbQ}_l)\stackrel{u^*}{\longleftarrow}
H^\bullet (Y^{\et}_V,\bar{\bbQ}_l) \stackrel{i^*}{\longrightarrow}
H^\bullet (Y^{\et}_s,\bar{\bbQ}_l).$$

This extends to smooth poset schemes. Namely, we can find $V$ as
above and a smooth poset scheme $\cX _V$ over $SpecV,$ which gives
rise to $\cX$ by extension of scalars. Let $\cX _s$ again be the
closed fiber, which is a smooth poset scheme over $\bar{\bf F}_q.$
Consider the correspodning diagram of poset schemes
$$\cX \stackrel{u}{\longrightarrow}\cX _V \stackrel{i}{\longleftarrow}
\cX _s.$$

\begin{lemma} The morphisms $u,i$ induce isomorphisms
$$H^\bullet (\cX^{\et},\bar{\bbQ}_l)\stackrel{u^*}{\longleftarrow}
H^\bullet (\cX^{\et}_V,\bar{\bbQ}_l) \stackrel{i^*}{\longrightarrow}
H^\bullet (\cX^{\et}_s,\bar{\bbQ}_l).$$ More precisely the morphisms
$u,i$ induce morphisms of the standard spectral sequences converging
to these groups. And these morphisms induces isomorphisms of the
corresponding $E_1$-terms.
\end{lemma}

\begin{proof} The proof is very similar to the proof of Lemma
\ref{analytic=etale}. Namely one considers the Godement resolution
$G_n$ of the constant sheaf $\bbZ /l^n$ on $\cX _V$ and passes to
the inverse limit. We omit the details.
\end{proof}

So it suffices to prove the degeneration of the standard spectral
sequence for $$H^\bullet (\cX^{\et}_s,\bar{\bbQ}_l).$$

\medskip

\noindent{\bf Step 3.} The geometric Frobenius endomorphism $\Fr$
acts on the smooth poset scheme $\cX _s$ and hence on the standard
spectral sequence which converges to $H^\bullet
(\cX^{\et}_s,\bar{\bbQ}_l).$ For each $\alpha \in S$ $\Fr$ acts on
$H^n (X^{\et}_{\alpha s},\bar{\bbQ}_l)$ with eigenvalues $\theta$
such that $\vert \theta \vert =q^{n/2}$ (Weil conjectures, see
\cite{De}).

In the standard spectral sequence each differential $d_r$ for $r\geq
2$ is a map between subquotients of $H^n (X^{\et}_{\alpha
s},\bar{\bbQ}_l)$ and $H^m (X^{\et}_{\beta s},\bar{\bbQ}_l)$ for
$n>m.$ Hence $d_r=0$ for $r\geq 2.$ This completes the proof of
Theorem \ref{degeneration-standard}.
\end{proof}

\section{Degeneration of Hodge to deRham spectral sequence for
smooth projective poset schemes.}

\begin{defi}
Let $\cX$ be a smooth complex projective poset scheme. Recall that
the analytic deRham complex $\Omega ^\bullet _{\cX ^{\an}}$ is a
resolution of the constant sheaf $\bbC _{\cX ^{\an}}.$ As in the
case of a single smooth variety the "stupid" filtration $F^p\Omega
^\bullet _{\cX ^{\an}}:=\oplus _{i\geq p}\Omega ^{i}_{\cX ^{\an}}$
of this deRham complex gives rise to the {\it Hodge-to-deRham}
spectral sequence converging to $H^\bullet (\cX ^{\an},\bbC ).$
\end{defi}

The following theorem is the poset scheme analogue of the well known
degeneration of the Hodge-to-deRham spectral sequence for smooth
projective varieties. The proof uses Theorem
\ref{degeneration-standard} above.

\begin{theo} \label{Hodge-to-deRham-degener-analytic}
Let $\cX$ be a smooth complex projective poset scheme. Then the
Hodge-to-deRham spectral sequence degenerates at the $E_2$-term.
That is $d_2=d_3=...=0.$ Hence
\begin{equation}\label{Hodge-decomp}H^\bullet(\cX ^{\an},\bbC )=
\bigoplus _pH^{\bullet -p}(\cX ^{\an},\Omega ^p _{\cX ^{\an}}).
\end{equation} In particular the map $\bbC _{\cX ^{\an}}\to \cO
_{\cX ^{\an}}$ induces a surjection $H^\bullet (\cX ^{\an},\bbC )\to
H^\bullet (\cX ^{\an},\cO _{\cX ^{\an}})=H^\bullet (\cX ,\cO
_{\cX}).$

The decomposition \ref{Hodge-decomp} is (contravariant) functorial
with respect to morphisms of smooth projective poset schemes.
\end{theo}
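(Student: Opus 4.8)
The plan is to reduce the Hodge-to-deRham degeneration for the poset scheme to the degeneration of the standard spectral sequence for constant coefficients, which is already established in Theorem \ref{degeneration-standard}. The standard spectral sequence computing $H^\bullet(\cX^{\an},\bbC)$ via the deRham complex carries two filtrations: the "poset" filtration (coming from the resolution \ref{resolution} of the structure sheaf, indexing the summands by elements $\alpha\in S$) and the Hodge filtration $F^p\Omega^\bullet_{\cX^{\an}}$. The key idea is that on each individual smooth projective variety $X_\alpha$ the classical Hodge-to-deRham spectral sequence degenerates at $E_1$, so the only remaining differentials in the combined spectral sequence are the ones arising from the poset structure, and these are exactly the differentials controlled by Theorem \ref{degeneration-standard}.

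First I would write down the standard double (in fact triple) complex computing $H^\bullet(\cX^{\an},\Omega^\bullet_{\cX^{\an}})$: fix a resolution of $\cO_{\cX}$ by sheaves $K_i=\oplus_\alpha j_{\alpha+}\cO_{X_\alpha}$ as in Example \ref{example}, and apply $\Hom(K_\bullet, -)$ to a (flabby, Godement) resolution of the deRham complex $\Omega^\bullet_{\cX^{\an}}$. This produces a spectral sequence whose $E_1$-term, after taking Hodge cohomology on each smooth variety, is
\begin{equation}\label{E1-hodge}
E_1^{\bullet}=\bigoplus_p\left(\cdots\to\bigoplus_\beta H^{\bullet-p}(X^{\an}_\beta,\Omega^p_{X^{\an}_\beta})\stackrel{\oplus f^*_{\alpha\beta}}{\longrightarrow}\bigoplus_\alpha H^{\bullet-p}(X^{\an}_\alpha,\Omega^p_{X^{\an}_\alpha})\to\cdots\right),
\end{equation}
where the inner differential is the poset differential $d_1$ and the Hodge degree $p$ is preserved because each $f^*_{\alpha\beta}$ is a morphism of smooth varieties and respects the Hodge type. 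The next step is to compare the Hodge-filtered spectral sequence for $\Omega^\bullet_{\cX^{\an}}$ with the constant-coefficient standard spectral sequence of Theorem \ref{degeneration-standard}: the associated graded of the Hodge filtration replaces $\bbC_{\cX^{\an}}$ by $\bigoplus_p\Omega^p_{\cX^{\an}}[-p]$, and on each $X_\alpha$ the classical (single-variety) Hodge degeneration identifies $\gr^p H^\bullet(X^{\an}_\alpha,\bbC)$ with $H^{\bullet-p}(X^{\an}_\alpha,\Omega^p)$.

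The heart of the argument is then a dimension count. By Theorem \ref{degeneration-standard} the standard spectral sequence for $H^\bullet(\cX^{\an},\bbC)$ degenerates at $E_2$, so its $E_2=E_\infty$ is the cohomology of the complex \ref{E-one}. On the other hand, the Hodge-to-deRham spectral sequence has $E_2$ equal to the cohomology of \ref{E1-hodge}. Since the classical Hodge decomposition holds on each smooth projective $X_\alpha$, the total dimension of the $E_2$-page of the Hodge-to-deRham spectral sequence equals the total dimension of the cohomology of \ref{E-one}, which by Theorem \ref{degeneration-standard} already equals $\dim_{\bbC}H^\bullet(\cX^{\an},\bbC)$. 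Degeneration at $E_2$ is then forced: no further differentials can drop the dimension without contradicting the total dimension already achieved. This gives the decomposition \ref{Hodge-decomp} and, because the Hodge filtration on $\Omega^\bullet_{\cX^{\an}}$ has $F^0$ equal to the whole complex and the quotient $F^0/F^1\cong\cO_{\cX^{\an}}$, the natural surjection $H^\bullet(\cX^{\an},\bbC)\to H^\bullet(\cX^{\an},\cO_{\cX^{\an}})$ follows.

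The main obstacle I anticipate is the bookkeeping in interleaving the two filtrations and verifying that the poset differential $d_1$ genuinely preserves Hodge type, so that the $E_1$-page splits as the direct sum over $p$ displayed in \ref{E1-hodge}; this is where one must be careful that the structure maps $f^*_{\alpha\beta}$ are pullbacks along morphisms of smooth varieties, hence strictly compatible with both the Hodge filtration and the weight filtration. Functoriality of the decomposition \ref{Hodge-decomp} under morphisms of smooth projective poset schemes is then automatic: a $\tau$-morphism $\cF:\cX\to\cX^\prime$ induces compatible maps on all the complexes and spectral sequences involved (via $\cF^*$ on the deRham complexes, as in Example \ref{pullback-differentials}), and since everything is built functorially from the single-variety Hodge decomposition and the Godement resolutions, the induced map respects the Hodge grading. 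The one point requiring genuine input beyond formal nonsense is the degeneration itself, and that is supplied entirely by Theorem \ref{degeneration-standard} together with the classical single-variety Hodge theory.
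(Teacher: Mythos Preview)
Your approach is essentially the same as the paper's: split the standard $E_1$-complex for $\Omega^\bullet_{\cX^{\an}}$ by Hodge type using classical Hodge theory on each smooth projective $X_\alpha$, invoke Theorem \ref{degeneration-standard}, and finish by a dimension count. The paper carries this out via the Dolbeault resolution and the explicit zig-zag $\Omega^\bullet\leftarrow\Omega^{\geq p}\to\Omega^p$, together with a small lemma saying that if a spectral sequence degenerates at $E_r$ and a second one maps to (or from) it injectively (or surjectively) on $E_r$, then the second one also degenerates; this lemma is what makes the ``direct summand'' reasoning rigorous.

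There is one imprecision in your write-up that you should fix. You assert that ``the Hodge-to-deRham spectral sequence has $E_2$ equal to the cohomology of \ref{E1-hodge}''. This is not immediate: the $E_1$-page of the Hodge-to-deRham sequence is $\bigoplus_{p}H^{\bullet}(\cX^{\an},\Omega^p)$, and identifying each summand with the cohomology of the $p$-th piece of \ref{E1-hodge} already requires the degeneration at $E_2$ of the \emph{standard} spectral sequence for each individual $\Omega^p$. That degeneration is not automatic; it is exactly what the paper extracts from Theorem \ref{degeneration-standard} via the direct-summand/injective-surjective lemma. Once you have that, the chain of inequalities
\[
\dim H^\bullet(\cX^{\an},\bbC)\ \leq\ \sum_p\dim H^\bullet(\cX^{\an},\Omega^p)\ \leq\ \sum_p\dim\bigl(\text{$E_2$ of standard s.s.\ for }\Omega^p\bigr)\ =\ \dim H^\bullet(\cX^{\an},\bbC)
\]
forces both inequalities to be equalities, giving \ref{Hodge-decomp} and hence the degeneration. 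So your dimension-count idea is correct, but make the intermediate step (degeneration of the standard spectral sequence for each $\Omega^p$) explicit rather than folding it into an unjustified identification.
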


\begin{proof} The degeneration of the Hodge-to-deRham spectral
sequence follows by dimension counting from the isomorphism
\ref{Hodge-decomp}. The last assertion of the theorem is obvious. So
it suffices to prove \ref{Hodge-decomp}. To compute the cohomology
of $\bbC _{\cX ^{\an}}$ we may use the Dolbeaut resolution $\bbC
_{\cX ^{\an}}\stackrel{\sim}{\to} \Omega ^\bullet _{\cX ^{\an}}\to
\cA _{\cX ^{\an}}^{\bullet \bullet},$ where $\cA ^{p,q}$ is the
sheaf of $C^\infty$ $(p,q)$-forms. The canonical morphism of
complexes $\Omega _{\cX ^{\an}}^\bullet \leftarrow \Omega ^{\geq
p}_{\cX ^{\an}}\to \Omega ^p_{\cX ^{\an}}$ lifts to a morphism of
the corresponding Dolbeaut resolutions $\cA ^{\bullet \bullet}
\leftarrow \cA ^{\geq p,\bullet}\to \cA ^{p,\bullet}.$ Thus we
obtain the induced morphisms of standard spectral sequences
 (Definition \ref{standard-sp-seq-general})
for $\Omega _{\cX ^{\an}}^\bullet ,\Omega ^{\geq p}_{\cX ^{\an}}$
and $\Omega ^p_{\cX ^{\an}}$ respectively.

Using the usual Hodge decomposition for each $X_\alpha \in \cX$ we
find that the $E_1$-term of the standard spectral sequence for
$\Omega _{\cX ^{\an}}^\bullet$ is the direct sum of complexes
$E_1^{(p,q)},$ where $E_1^{(p,q)}$ consists of summands
$H^{p,q}(X_\alpha ,\bbC ).$ Certainly the $E_1$ term of the standard
spectral sequence for the complex $\Omega ^{\geq p}_{\cX ^{\an}}$
(resp. $\Omega ^{\leq p}_{\cX ^{\an}},$ resp. $\Omega ^{p}_{\cX
^{\an}}$) identifies as a direct summand of this complex which
consists of summands $H^{\geq p,\bullet}(X_\alpha \bbC)$ (resp.
$H^{\leq p,\bullet}(X_{\alpha},\bbC ),$ resp.
$H^{p,\bullet}(X_{\alpha},\bbC )$). By Theorem
\ref{degeneration-standard} the standard spectral sequence for the
complex $\Omega ^\bullet _{\cX ^{\an}}$ degenerates at $E_2.$
Applying the next lemma we conclude that the standard spectral
sequences for these other complexes also degenerate at $E_2.$ Now
using the dimension count we find the isomorphism
\ref{Hodge-decomp}, which proves the theorem.
\end{proof}

\begin{lemma} Let $A\to B$ be a morphism of bounded below double
complexes. Denote by $E_r(A)$ and $E_r(B)$ the $E_r$-terms of the
corresponding spectral sequences converging to $H^\bullet (Tot(A))$
and $H^\bullet(Tot(B))$ respectively.

i) Assume that the spectral sequence for $B$ degenerates at
$E_r(B),$ i.e. $0=d_r(B)=d_{r+1}(B)=...$ and the induced map of
complexes $E_r(A)\to E_r(B)$ is injective. Then the sequence for $A$
also degenerates at $E_r.$

ii) Assume that the sequence for $A$ degenerates at $E_r$ and the
map $E_r(A)\to E_r(B)$ is surjective. Then the sequence for $B$ also
degenerates at $E_r.$
\end{lemma}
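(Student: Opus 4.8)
The plan is to reduce everything to the standard fact that a morphism of bounded below double complexes induces a morphism of the associated spectral sequences, compatible with every page differential and with the passage from one page to the next. Writing $\phi_r : E_r(A) \to E_r(B)$ for the induced map on the $r$-th page, the two structural facts I would use are: (1) $\phi_r$ is a chain map for the page differentials, i.e.\ $\phi_r \circ d_r(A) = d_r(B) \circ \phi_r$; and (2) the map on the next page is the one induced by $\phi_r$ on homology, $\phi_{r+1} = H(\phi_r)$. Both are immediate from the construction of the spectral sequence of a filtered total complex applied to the map $\Tot(A) \to \Tot(B)$; boundedness below guarantees that each page is well behaved. Note that, as in the statement, ``degeneration at $E_r$'' means precisely $d_s = 0$ for all $s \ge r$, so the whole lemma is an assertion about the vanishing of differentials and no convergence input is needed beyond the existence of the morphism of spectral sequences.

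For part (i), I would argue by induction on the page, proving $d_s(A) = 0$ for all $s \ge r$. For the base page $s = r$: since the sequence for $B$ degenerates at $E_r$ we have $d_r(B) = 0$, whence $\phi_r \circ d_r(A) = d_r(B) \circ \phi_r = 0$; as $\phi_r$ is injective this forces $d_r(A) = 0$. Because both page differentials now vanish, $E_{r+1}(A) = E_r(A)$ and $E_{r+1}(B) = E_r(B)$, and by (2) the map $\phi_{r+1} = H(\phi_r)$ is literally $\phi_r$ again, hence still injective. The degeneration hypothesis for $B$ supplies $d_{r+1}(B) = 0$, and the identical argument gives $d_{r+1}(A) = 0$; iterating yields $d_s(A) = 0$ for every $s \ge r$, which is exactly degeneration of $A$ at $E_r$.

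Part (ii) is the dual argument with the roles of $A$ and $B$, and of injective and surjective, interchanged. Now $d_s(A) = 0$ is given for all $s \ge r$, and I use surjectivity of $\phi_r$. From $d_r(B) \circ \phi_r = \phi_r \circ d_r(A) = 0$ and surjectivity of $\phi_r$, the differential $d_r(B)$ vanishes on all of $E_r(B)$, so $d_r(B) = 0$. Again the pages do not change and $\phi_{r+1} = \phi_r$ remains surjective, while $d_{r+1}(A) = 0$ by hypothesis; repeating gives $d_s(B) = 0$ for all $s \ge r$.

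The only point deserving care, and the one I would write out most explicitly, is the inheritance of injectivity (resp.\ surjectivity) from $\phi_r$ to $\phi_{r+1}$. This is painless precisely because in the inductive step both page differentials have just been shown to vanish, so the next page equals the current one and $\phi_{r+1}$ equals $\phi_r$; I would invoke (2) in this degenerate form rather than appeal to any exactness of homology on a general short exact sequence. I expect no further obstacle: the entire lemma is a short diagram-chase on the morphism of spectral sequences once fact (1) and fact (2) are in hand.
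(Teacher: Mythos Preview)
Your proof is correct and is exactly the argument the paper has in mind; the paper's own proof is the single sentence ``This is obvious,'' so you have simply written out the standard induction on pages that justifies that claim.
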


\begin{proof} This is obvious.
\end{proof}

In the proof of the last theorem we also obtained the following
result.

\begin{prop}\label{analytic-degeneration} Let $\cX$ be a smooth complex projective poset scheme.
Then the standard spectral sequences converging to the cohomology of
$\cX ^{\an}$ with coefficients respectively in $\Omega ^{\geq
p}_{\cX ^{\an}}, \Omega ^{\leq p}_{\cX ^{\an}},\Omega ^{p}_{\cX
^{\an}}$ degenerate at $E_2$-terms.
\end{prop}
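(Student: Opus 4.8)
The plan is to observe that this proposition is already contained in the proof of Theorem \ref{Hodge-to-deRham-degener-analytic} and to make the three degenerations explicit by feeding the direct-summand structure present at the $E_1$-level into the comparison lemma on double complexes stated just above.

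First I would set up the three canonical morphisms of complexes of sheaves on $\cX^{\an}$ used in the previous proof: the inclusion $\Omega^{\geq p}_{\cX^{\an}} \hookrightarrow \Omega^\bullet_{\cX^{\an}}$, the projection $\Omega^{\geq p}_{\cX^{\an}} \twoheadrightarrow \Omega^p_{\cX^{\an}}$, and the projection $\Omega^\bullet_{\cX^{\an}} \twoheadrightarrow \Omega^{\leq p}_{\cX^{\an}}$. Lifting these to the Dolbeaut resolutions exactly as before, each induces a morphism of the associated standard double complexes, hence of standard spectral sequences. The crucial structural input, already recorded in that proof, is that the $E_1$-term of the standard spectral sequence for $\Omega^\bullet_{\cX^{\an}}$ splits as $\bigoplus_p E_1^{(p,\bullet)}$, with $E_1^{(p,q)}$ assembled from the Hodge summands $H^{p,q}(X_\alpha,\bbC)$, and that the differential $d_1$ — being induced by the pullback structure maps $f^*_{\alpha\beta}$ — preserves this $(p,q)$-grading. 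Consequently the $E_1$-terms of the spectral sequences for $\Omega^{\geq p}$, $\Omega^{\leq p}$ and $\Omega^p$ are identified with the subcomplex $\bigoplus_{p'\geq p}E_1^{(p',\bullet)}$, the quotient $\bigoplus_{p'\leq p}E_1^{(p',\bullet)}$ and the summand $E_1^{(p,\bullet)}$ respectively, the comparison maps being direct-summand inclusions or projections compatible with $d_1$.

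Next I would pass to $E_2$. Because each comparison map respects the $d_1$-grading, taking $d_1$-cohomology turns the direct-summand inclusions and projections at $E_1$ into direct-summand inclusions and projections at $E_2$; in particular $E_2(\Omega^{\geq p})\to E_2(\Omega^\bullet)$ is injective, while $E_2(\Omega^\bullet)\to E_2(\Omega^{\leq p})$ and $E_2(\Omega^{\geq p})\to E_2(\Omega^p)$ are surjective. By Theorem \ref{degeneration-standard} the standard spectral sequence for $\Omega^\bullet_{\cX^{\an}}$, which computes $H^\bullet(\cX^{\an},\bbC)$, degenerates at $E_2$. Applying part i) of the preceding lemma to the injection then yields degeneration at $E_2$ for $\Omega^{\geq p}$; applying part ii) to the surjection onto $E_2(\Omega^{\leq p})$ yields it for $\Omega^{\leq p}$; and applying part ii) once more to the now-available surjection $E_2(\Omega^{\geq p})\to E_2(\Omega^p)$ yields it for $\Omega^p$.

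The routine part is the bookkeeping of the filtrations and the lift to Dolbeaut resolutions, which is verbatim as in the proof of Theorem \ref{Hodge-to-deRham-degener-analytic}. The one point requiring genuine care — the main obstacle — is verifying that the comparison maps really realize the $E_1$-terms as \emph{$d_1$-compatible} direct summands, so that injectivity and surjectivity survive the passage to $E_2$; this rests on the fact that the structure morphisms $f_{\alpha\beta}$ are holomorphic and hence their pullbacks strictly preserve the Hodge bigrading on each $H^\bullet(X_\alpha,\bbC)$.
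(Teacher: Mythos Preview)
Your proposal is correct and follows essentially the same route as the paper: the proposition is extracted from the proof of Theorem \ref{Hodge-to-deRham-degener-analytic}, where the Dolbeaut lift of the maps $\Omega^\bullet \leftarrow \Omega^{\geq p}\to \Omega^p$ (and the projection to $\Omega^{\leq p}$) identifies the respective $E_1$-terms as $d_1$-stable direct summands of the $E_1$-term for $\Omega^\bullet$, and the comparison lemma then transfers the $E_2$-degeneration from Theorem \ref{degeneration-standard}. Your explicit bookkeeping of which part of the lemma handles each of the three cases, and your remark that $d_1$-compatibility is what guarantees injectivity/surjectivity survives to $E_2$, are exactly the points the paper leaves implicit.
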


Now using GAGA we derive the corresponding statements in the
algebraic category. Namely let $\cX$ be a smooth complex projective
poset scheme. We consider again the "stupid" filtration $F^p\Omega
^\bullet _{\cX}:=\oplus _{i\geq p}\Omega ^{i}_{\cX }$ of the
algebraic deRham complex. It  gives rise to the spectral sequence
converging to $H^\bullet (\cX ^{\an},\Omega ^\bullet _{\cX} ).$ We
also call it "Hodge-to-de Rham".

\begin{theo} \label{degeneration-algebraic}
Let $\cX$ be a smooth complex projective poset scheme.

a) The (algebraic) Hodge-to-de Rham spectral sequence degenerates at
the $E_2$-term. That is $d_2=d_3=...=0.$ Hence
\begin{equation}\label{algebraic-Hodge-decomp}
H^\bullet(\cX ,\Omega ^\bullet _{\cX} )= \bigoplus _pH^{\bullet
-p}(\cX ,\Omega ^p _{\cX}). \end{equation}

The decomposition \ref{algebraic-Hodge-decomp} is functorial with
respect to morphisms of poset schemes.

b) The standard spectral sequences converging to the cohomology of
$\cX $ with coefficients respectively in $\Omega ^{\geq p}_{\cX },
\Omega ^{\leq p}_{\cX },\Omega ^{p}_{\cX }$ degenerate at
$E_2$-terms.
\end{theo}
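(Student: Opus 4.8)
The plan is to deduce Theorem \ref{degeneration-algebraic} from its analytic counterpart (Theorem \ref{Hodge-to-deRham-degener-analytic} and Proposition \ref{analytic-degeneration}) by invoking the poset GAGA comparison established in Subsection \ref{GAGA}. The key observation is that each term $\Omega^i_{\cX}$ is a coherent sheaf on the smooth projective poset scheme $\cX$, so the same is true of the truncated complexes $\Omega^{\geq p}_{\cX}$, $\Omega^{\leq p}_{\cX}$, and $\Omega^p_{\cX}$ (they are bounded complexes of coherent sheaves). For any such bounded complex of coherent sheaves $L$ on $\cX$, the poset GAGA discussion shows $H^\bullet(\cX, L) = H^\bullet(\cX^{\an}, L^{\an})$, and moreover the standard spectral sequence for $H^\bullet(\cX, L)$ degenerates at $E_r$ (for $r \geq 2$) if and only if the standard spectral sequence for $H^\bullet(\cX^{\an}, L^{\an})$ does.

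First I would apply this comparison to $L = \Omega^p_{\cX}$ to identify $H^\bullet(\cX, \Omega^p_{\cX})$ with $H^\bullet(\cX^{\an}, \Omega^p_{\cX^{\an}})$, and more generally to each of the truncated complexes appearing in part b). This immediately transfers part b): by Proposition \ref{analytic-degeneration} the standard spectral sequences for $\Omega^{\geq p}_{\cX^{\an}}$, $\Omega^{\leq p}_{\cX^{\an}}$, $\Omega^p_{\cX^{\an}}$ all degenerate at $E_2$, hence by the GAGA equivalence so do their algebraic analogues. For part a), I would compare the two Hodge-to-deRham spectral sequences directly. Since the stupid filtration $F^p \Omega^\bullet_{\cX}$ analytizes to $F^p \Omega^\bullet_{\cX^{\an}}$, the analytization functor gives an isomorphism of the associated Hodge-to-deRham spectral sequences (term by term the graded pieces are $H^\bullet(\cX, \Omega^p_{\cX}) = H^\bullet(\cX^{\an}, \Omega^p_{\cX^{\an}})$). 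Theorem \ref{Hodge-to-deRham-degener-analytic} gives degeneration at $E_2$ on the analytic side, and the isomorphism of spectral sequences carries this to the algebraic side. The decomposition \ref{algebraic-Hodge-decomp} then follows by the same dimension count as in the analytic case, and functoriality is inherited from the functoriality of the GAGA comparison (compatibility with $\tau$-morphisms is recorded in Subsection \ref{GAGA}).

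The main technical point to verify carefully is that GAGA applies not just to a single coherent sheaf but at the level of the \emph{standard spectral sequence}, i.e. that the analytization map induces an isomorphism on $E_1$-terms and commutes with all higher differentials. This is exactly what is asserted in Subsection \ref{GAGA}: for a coherent $F$ on $\cX$ the morphism of standard spectral sequences for $H^\bullet(\cX, F)$ and $H^\bullet(\cX^{\an}, F^{\an})$ is an isomorphism on $E_1$, and degeneration at $E_r$ holds on one side iff it holds on the other. Granting that, the rest is bookkeeping: choose one fixed resolution \ref{resolution} of $\cO_{\cX}$ by sheaves $\oplus_\alpha j_{\alpha +}\cO_{X_\alpha}$ and use it to compute both the algebraic and analytic sides, so that the comparison is literally an isomorphism of the computing double complexes rather than merely an abstract identification. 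The only thing that genuinely requires the hard input is the \emph{analytic} degeneration, which is already proved; the passage to the algebraic category is then formal.
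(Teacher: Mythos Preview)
Your proposal is correct and follows essentially the same route as the paper. The paper's proof is extremely terse: for (a) it says the decomposition \ref{algebraic-Hodge-decomp} follows from the analytic decomposition \ref{Hodge-decomp} together with Subsection \ref{GAGA}, and then degeneration is deduced by dimension counting; for (b) it simply cites Proposition \ref{analytic-degeneration} together with Subsection \ref{GAGA}. Your write-up unpacks exactly these two sentences, making explicit that GAGA identifies the algebraic and analytic standard spectral sequences (and the graded pieces of the stupid filtration) term by term, so that degeneration on one side is equivalent to degeneration on the other.
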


\begin{proof} a) As in the analytic case everything follows from the
isomorphism \ref{algebraic-Hodge-decomp} by dimension counting. But
this isomorphism \ref{algebraic-Hodge-decomp} follows from the
isomorphism \ref{Hodge-decomp} and Subsection \ref{GAGA}.

b) This follows from Proposition \ref{analytic-degeneration} and
Subsection \ref{GAGA}.
\end{proof}

\begin{example} Let us give a simple example of a projective poset scheme
which is not smooth and for which the standard spectral sequence
converging to $H^\bullet (\cX ,\cO _{\cX})$ does not degenerate at
$E_2.$ Namely, let $X$ be be a projective curve which is the union
of two projective lines $C_1$ and $C_2$ which intersect
transversally at 2 points $p_1$ and $p_2.$ Then $H^1(X,\cO _X)$ has
dimension 1. Now take two copies of the curve $X=X_1=X_2,$ and let
the poset scheme $\cX$ consist of $X_1,X_2,C_1,C_2,p_1,p_2$ with the
obvious maps from each of the $C$'s (resp. $p$'s) to each of the
$X$'s (resp. $C$'s). Then a standard spectral sequence converging to
$H^\bullet (\cX ,\cO _{\cX})$ has for the $E_1$-term the natural
complex
$$0\to H^\bullet(X_1)\oplus H^\bullet (X_2)\to
H^\bullet(C_1)\oplus H^\bullet (C_2)\to H^\bullet(p_1)\oplus
H^\bullet (p_2)\to 0 $$ where $H^\bullet (Y)$ denotes $H^\bullet
(Y,\cO _Y).$ Let $0\neq a\in H^1(X,\cO _X).$ Then $(a,-a)$ is a
nonzero cycle in the above complex and it is not difficult to check
that $d_2(a,-a)\neq 0.$
\end{example}

\section{Cubical hyperresolutions and Du Bois singularities}

Cubical hyperresolutions are poset schemes of a certain type. Here
we briefly recall the definition and the main properties of cubical
hyperresolutions according to \cite{LNM1335},Ex.1.

For each integer $n\geq -1$ we denote by by $\square ^+_n$ the poset
which is the product of $n+1$ copies of the poset $\{0,1\}$. Thus
for $n=-1$ the poset $\square ^+_{-1}$ consists of one element and
$\square ^+_{0}=\{0,1\}.$ Let $\square _n$ denote the complement in
$\square ^+_{n}$ of the initial object $(0...0).$ For $\alpha
=(\alpha _0...\alpha _n)\in \square ^+_n$ we put $\vert \alpha \vert
=\alpha _0+...+\alpha _n.$

\begin{defi} Let $S$ be a (finite) poset,
$\cX$ be a reduced separated $S$-scheme
of finite type, and let $\cZ $ be a reduced $\square ^+_1\times
S$-scheme. We call $\cZ $ a 2-resolution of $\cX$ if for each $\beta
\in S$ the commutative diagram
$$\begin{array}{ccc}
Z_{11\beta} & \to & Z_{01\beta}\\
\downarrow & & \downarrow f\\
Z_{10\beta } & \to & Z_{00\beta}
\end{array}
$$
has the following properties:

1) it is a cartesian square,

2) $Z_{00\beta }=X_{\beta},$

3) $Z_{01\beta}$ is smooth,

4) horizontal arrows are closed embeddings,

5) the morphism $f$ is proper,

6) $Z_{10\beta}$ contains the discriminant of $f.$  In other words
$f$ induces an isomorphism $f:Z_{01\beta }\backslash
Z_{11\beta}\stackrel{\sim}{\to} Z_{00\beta }\backslash Z_{10\beta}.$
\end{defi}

\begin{defi} Fix a poset $S$ and an integer $r\geq 1.$
Assume that for each $1\leq n \leq r$ we are given an
$\square ^+_{n}\times S$-scheme $\cX ^n$ so that the $\square
^+_{n-1}\times S$ schemes $\cX ^{n+1}_{00\bullet}$ and $\cX
^n_{1\bullet}$ are equal. We define by induction on $r$ an $\square
^+_r\times S$-scheme $\cZ =\rm{rd}(\cX ^1,\cX^2,...,\cX ^r),$ which
we call the reduction of $(\cX ^1 ,...,\cX ^r ).$ Namely, if $r=1$
we put $\cZ  =\cX ^1.$ If $r=2$ we define
$$Z _{\alpha \beta}= \left\{ \begin{array}{ll}
X^1_{0\beta}, & \text{if $\alpha =(00),$}\\
X^2_{\alpha \beta}, & \text{if $\alpha \in \square _1$}\\
\end{array} \right.
$$
for all $\beta \in \square ^+_0\times S.$ For $r>2$ we put
$$\cZ  =\rm{rd}(\rm{rd}(\cX ^1  ,...,\cX ^{r-1}),\cX ^r).$$
\end{defi}

\begin{defi} Let $S$ be a poset and $\cX$ be an $S$-scheme.
An {\rm augmented cubical hyperresolution} of $\cX$ is an $\square
^+_r\times S$-scheme $\cZ ^+$ such that
$$\cZ ^+=\rm{rd}(\cX ^1,...,\cX ^r),$$
where

1) $\cX^1$ is a 2-resolution of $\cX,$

2) for each $1\leq n \le r,$ $\cX ^{n+1}$ is a 2-resolution of $\cX
^n_{1\bullet},$ and

2) $Z_{\alpha }$ is smooth for each $\alpha \in \square _r.$

We will call the $\square _r$-scheme $\cZ =\cZ ^+ \backslash
Z_{(0,...,0)}$ a {\rm  cubical hyperresolution} of $\cX.$ It comes
with the augmentation morphism of poset schemes $\pi :\cZ \to \cX,$
which is compatible with the projection of posets $\square _r\times
S\to S.$
\end{defi}

\begin{theo} Assume that the base field $k$ has characteristic zero.
Let $S$ be a poset and $\cX $ be a separated reduced $S$-scheme of
finite type. Then there exists an augmented cubical hyperresolution
$\cZ$ of $\cX,$ such that $\dim Z_{\alpha}\leq \dim \cX -\vert
\alpha \vert +1.$
\end{theo}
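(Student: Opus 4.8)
The plan is to reduce the statement to the existence of a single 2-resolution and then iterate, following the inductive scheme of \cite{LNM1335}; the only non-formal ingredient is a resolution of singularities carried out simultaneously over the whole poset $S.$

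First I would dispose of the formal part. By construction an augmented cubical hyperresolution is the iterated reduction $\mathrm{rd}(\cX^1,\dots,\cX^r),$ where $\cX^1$ is a 2-resolution of $\cX$ and each $\cX^{n+1}$ is a 2-resolution of the $1$-part $\cX^n_{1\bullet}.$ Granting that every reduced separated finite-type $S$-scheme admits a 2-resolution whose $1$-part has dimension $\le\dim\cX-1,$ the theorem follows by induction on $\dim\cX$: the $1$-part is a $(\square^+_0\times S)$-scheme of strictly smaller dimension, to which the inductive hypothesis applies, and since a single reduction increases $|\alpha|$ by exactly $1,$ the estimate $\dim Z_\alpha\le\dim\cX-|\alpha|+1$ propagates across the gluing. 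So everything comes down to constructing a 2-resolution.

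To build the smooth vertex I would produce a morphism of $S$-schemes $g\colon Z_{01}\to\cX=Z_{00}$ that is fibrewise a proper birational resolution with $Z_{01}$ smooth, proceeding by induction on $|S|$ and adjoining a maximal element $\beta_0$ last. Having compatibly resolved the restriction of $\cX$ to $S\setminus\{\beta_0\},$ I must lift each $f_{\beta_0\gamma}$ to a morphism $Z_{01\beta_0}\to Z_{01\gamma}.$ When the rational map $X_{\beta_0}\dashrightarrow Z_{01\gamma}$ obtained from $g_\gamma^{-1}$ is dominant, it suffices to resolve the closure of its graph inside the fibre product $X_{\beta_0}\times_{X_\gamma}Z_{01\gamma},$ which is proper over $X_{\beta_0}$ because $g_\gamma$ is proper; this yields a smooth birational $Z_{01\beta_0}$ carrying the required projections, and the cocycle identities hold because they hold on a dense open and all schemes are separated. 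Next I would assemble the discriminant vertex: by descending induction put $Z_{10\beta}=\overline{D_\beta\cup\bigcup_{\alpha>\beta}f_{\alpha\beta}(Z_{10\alpha})},$ where $D_\beta$ is the fibrewise discriminant of $g_\beta;$ then $f_{\alpha\beta}(Z_{10\alpha})\subset Z_{10\beta},$ so $Z_{10}$ is a genuine closed sub-poset-scheme, each $Z_{10\beta}$ is a proper closed subset, and the same induction gives $\dim Z_{10\beta}\le\dim\cX-1$ since images do not raise dimension. Setting $Z_{11}:=g^{-1}(Z_{10})=Z_{01}\times_{Z_{00}}Z_{10}$ makes the four vertices into a $\square^+_1\times S$-scheme satisfying properties (1)--(6): the square is cartesian by definition, $g$ is proper and an isomorphism outside $Z_{10},$ and $\dim Z_{11}\le\dim Z_{10}\le\dim\cX-1,$ which is exactly the input needed above.

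The hard part will be the simultaneous resolution in the \emph{non-dominant} case, and this is the one step I expect to require the full diagram-theoretic machinery of \cite{LNM1335}. If $f_{\beta_0\gamma}$ sends the generic point of $X_{\beta_0}$ into $D_\gamma,$ the rational lift $X_{\beta_0}\dashrightarrow Z_{01\gamma}$ is nowhere defined, and in general \emph{no} birational resolution of $X_{\beta_0}$ admits a morphism to $Z_{01\gamma}$ over $X_\gamma$ (a positive-dimensional fibre of $Z_{11\gamma}\to D_\gamma$ may have no rational section). The remedy is precisely not to resolve fibrewise but to resolve the whole poset scheme at once, so that a component of $X_{\beta_0}$ lying over $D_\gamma$ is matched against the already-constructed smooth model of the lower-dimensional stratum $Z_{11\gamma},$ which is available by the induction on dimension because $\dim Z_{11\gamma}\le\dim\cX-1.$ I would therefore either cite the existence of cubical hyperresolutions for diagrams of schemes indexed by the category $S$ directly from \cite{LNM1335}, or reprove it by interleaving the construction of $Z_{01}$ with the descending induction on dimension so that every lift is constructed over strata that have already been resolved. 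With that step granted, the remaining verifications---exactness of the reductions, the dimension bookkeeping, and properties (1)--(6)---are routine.
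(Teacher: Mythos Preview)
The paper does not prove this theorem at all: immediately after stating it (together with the following proposition) the author writes ``We refer the reader to \cite{LNM1335}, Ex.~1, Thm.~2.15, Prop.~2.14 for the proof of the above theorem and proposition\ldots''. So there is nothing to compare on the paper's side beyond a bare citation.

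Your sketch is broadly in the spirit of the argument in \cite{LNM1335}: iterate 2-resolutions, inducting on dimension, and keep track of the estimate $\dim Z_\alpha\le\dim\cX-|\alpha|+1$ through the reductions. Two points deserve care, though. First, in your induction on $|S|$ the maximal element $\beta_0$ may sit above several $\gamma$'s simultaneously, and you must produce a single smooth $Z_{01\beta_0}$ dominating $X_{\beta_0}$ and mapping compatibly to \emph{all} the already-constructed $Z_{01\gamma}$; resolving the closure of one graph at a time does not suffice, and one has to work inside the fibre product over all lower vertices before resolving. Second, you are right that the non-dominant case is the genuine obstruction, and your own proposal ultimately defers to \cite{LNM1335} for it; since that is exactly what the paper does for the entire theorem, your write-up is in effect a partial unpacking of the cited reference rather than an alternative proof. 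If you want a self-contained argument you must carry out the interleaved dimension/poset induction you allude to in full, which is precisely the content of \cite{LNM1335}, Expos\'e~I.
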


\begin{prop} \label{fiber-hyperres} Let $S$ be a poset,
$\cX$ an $S$-scheme and $\cZ$ an
$\square ^+_r\times S$-scheme, which is an augmented cubical
hyperresolution of $\cX.$ Then for each $\alpha \in S$ the $\square
^+_r$-scheme $\cZ _{\bullet \alpha}$ is an augmented cubical
hyperresolution of $X_{\alpha}.$
\end{prop}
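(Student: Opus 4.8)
The plan is to reduce the statement to the single observation that every condition occurring in the definitions of a 2-resolution, of the reduction $\rm{rd}$, and of an augmented cubical hyperresolution is imposed \emph{pointwise over the base poset}, and that the restriction $(-)_{\bullet\alpha}$ fixes only the $S$-factor of the base poset while retaining all structure in the $\square$-directions; thus $(-)_{\bullet\alpha}$ merely selects the subfamily of base points lying over $\alpha \in S$, and a condition holding at every base point in particular holds on this subfamily. Concretely, I would first record the fiberwise behaviour of 2-resolutions: if $\cV$ is a $(\square^+_m \times S)$-scheme and $\cW$ is a 2-resolution of $\cV$, then $\cW_{\bullet\alpha}$ is a 2-resolution of the $\square^+_m$-scheme $\cV_{\bullet\alpha}$. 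This is immediate from the definition, since the six defining properties (cartesianness, the identification of the $(00)$-corner with the resolved scheme, smoothness of the $(01)$-corner, the horizontal closed embeddings, properness of the vertical map $f$, and containment of the discriminant) are required for each $\beta$ in the base poset; fixing the $S$-coordinate of $\beta$ to $\alpha$ retains exactly the squares indexed by $\square^+_m \times \{\alpha\}$, each of which already satisfies all six.

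Next I would show that $(-)_{\bullet\alpha}$ commutes with the reduction operation, that is,
$$\rm{rd}(\cX^1, \dots, \cX^r)_{\bullet\alpha} = \rm{rd}\big((\cX^1)_{\bullet\alpha}, \dots, (\cX^r)_{\bullet\alpha}\big),$$
arguing by induction on $r$ via the recursion $\rm{rd}(\cX^1, \dots, \cX^r) = \rm{rd}(\rm{rd}(\cX^1, \dots, \cX^{r-1}), \cX^r)$. The point is that the case-distinction defining $\rm{rd}$ (for $r=2$, the choice between $X^1_{0\beta}$ and $X^2_{\alpha\beta}$) depends only on the $\square$-coordinate and never on the $S$-coordinate, so it is untouched by fixing the $S$-part of $\beta$ to $\alpha$. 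Along the way I would note the trivial identity $((\cX^n)_{1\bullet})_{\bullet\alpha} = ((\cX^n)_{\bullet\alpha})_{1\bullet}$, since fixing the leading $\square$-coordinate to $1$ and fixing the $S$-coordinate to $\alpha$ act on independent coordinates.

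With these in hand the assembly is direct. Writing $\cZ = \rm{rd}(\cX^1, \dots, \cX^r)$ where $\cX^1$ is a 2-resolution of $\cX$ and each $\cX^{n+1}$ is a 2-resolution of $(\cX^n)_{1\bullet}$, the first step gives that $(\cX^1)_{\bullet\alpha}$ is a 2-resolution of $X_\alpha$ and, combined with the coordinate identity, that $(\cX^{n+1})_{\bullet\alpha}$ is a 2-resolution of $((\cX^n)_{\bullet\alpha})_{1\bullet}$; the smoothness of the schemes indexed by $\square_r$ is inherited because smoothness is a pointwise property preserved by passing to the fiber over $\alpha$. Hence $(\cX^1)_{\bullet\alpha}, \dots, (\cX^r)_{\bullet\alpha}$ constitute an augmented cubical hyperresolution of $X_\alpha$, and by the second step its reduction equals $\cZ_{\bullet\alpha}$; the augmentation $\pi : \cZ \to \cX$ restricts to an augmentation $\cZ_{\bullet\alpha} \to X_\alpha$ because $\pi$ is compatible with the projection $\square_r \times S \to S$. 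I expect the only genuine work to be the index bookkeeping in the second step --- tracking the $\square^+_n$-coordinates through the nested reductions to confirm that fixing the $S$-coordinate really commutes with each stage of $\rm{rd}$; everything else is read off from the pointwise nature of the definitions.
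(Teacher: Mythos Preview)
Your argument is correct. The paper does not actually supply its own proof of this proposition: it simply refers the reader to \cite{LNM1335}, Expos\'e~1, Prop.~2.14 (together with Thm.~2.15 for the existence theorem). Your direct verification from the definitions --- that the six conditions in a 2-resolution are imposed for each point of the base poset, that the reduction $\mathrm{rd}$ only inspects the $\square$-coordinates and therefore commutes with fixing the $S$-coordinate, and that smoothness over $\square_r$ survives restriction --- is exactly the content behind that citation, and is the natural way to see the statement. There is nothing missing; the ``index bookkeeping'' you flag is indeed the only thing requiring any care, and you have described it accurately.
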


We refer the reader to \cite{LNM1335},Ex.1,Thm.2.15,Prop.2.14 for
the proof of the above theorem and proposition and also for the
study of the category of cubical hyperresolutions of $S$-schemes.

\begin{remark} Let $X$ be a reduced separated complex scheme of
finite type and let $\pi :\cZ \to X$ be a cubical hyperresolution.
Then $\bR \pi ^{\an}_*\bbC _{\cZ ^{\an}}=\bbC _{X^{\an}}.$ This
follows from \cite{LNM1335},Ex.1,Thm.6.1.
\end{remark}

\begin{defi} Let $X$ be a reduced separated scheme of finite type over a
field of characteristic zero. Choose its cubical hyperresolution
$\pi :\cZ \to X.$ We say that $X$ has Du Bois singularities ($X$ is
Du Bois, for short) if the adjunction morphism $\cO _X\to \bR \pi
_*\cO _{\cZ}$ is a quasi-isomorphism.
\end{defi}

\begin{remark} The complex $\bR \pi _*\cO _{\cZ}\in D(X)$ is independent
(up to a quasi-isomorphism) on the choice of a hyperresolution of
$X$ (\cite{LNM1335},Ex.3). So the notion of Du Bois singularities is
well defined.
\end{remark}

\begin{remark} \label{DuBois-sing} If $X$ has rational singularities (for example $X$ is
smooth), then $X$ is Du Bois. It was conjectured by Kollar \cite{Ko}
and recently proved by Kollar and Kovac \cite{KoKov} that if $X$ has
log canonical singularities, then $X$ is Du Bois.
\end{remark}

\begin{theo}\label{Kovac} Let $X$ be a reduced separated scheme of
finite type over a field of characteristic zero. Choose its
hyperresolution $\pi :\cZ \to X.$ Assume that the adjunction map
$\cO _X\to \bR \pi _*\cO _{\cZ}$ has a left inverse. Then $X$ is Du
Bois (i.e. this map is a quasi-isomorphism).
\end{theo}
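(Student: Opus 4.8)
The statement is precisely Kovács' splitting criterion, now for Du Bois rather than rational singularities, so the plan is to adapt Kovács' duality argument. Write $\phi:\cO_X\to\bR\pi_*\cO_\cZ$ for the adjunction map and let $\rho$ be a left inverse, $\rho\circ\phi\simeq\id$. Completing $\phi$ to an exact triangle $\cO_X\xrightarrow{\phi}\bR\pi_*\cO_\cZ\to C\to$ in $D^b_{coh}(X)$, the existence of $\rho$ splits the triangle, so $\bR\pi_*\cO_\cZ\simeq\cO_X\oplus C$; hence it suffices to prove $C\simeq 0$. The assertion is local on $X$, and since $\pi$ is an isomorphism over the smooth locus of $X$ (the hyperresolution is a resolution there), $C$ is supported on the singular locus. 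A splitting by itself does not force the summand $C$ to vanish, so the essential content is to produce an extra constraint from duality.

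First I would dualize. Let $\bbD_X(-)=\bR\shHom_X(-,\omega_X^\bullet)$ denote Grothendieck--Serre duality with respect to the dualizing complex $\omega_X^\bullet$, and let $\omega_\cZ^\bullet$ be the dualizing complex of the smooth poset scheme $\cZ$, which on each smooth component $X_\alpha$ is the shifted canonical sheaf. Grothendieck duality for the proper morphism $\pi$ then gives $\bbD_X(\bR\pi_*\cO_\cZ)\simeq\bR\pi_*\bbD_\cZ(\cO_\cZ)\simeq\bR\pi_*\omega_\cZ^\bullet$, and under this identification $\bbD_X(\phi)$ becomes the trace map $\mathrm{tr}:\bR\pi_*\omega_\cZ^\bullet\to\omega_X^\bullet$, while $\bbD_X(\rho)$ becomes a right inverse of $\mathrm{tr}$ (since $\rho\phi\simeq\id$ dualizes to $\bbD_X(\phi)\circ\bbD_X(\rho)\simeq\id$). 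Thus $\mathrm{tr}$ is a \emph{split surjection}, and $\bbD_X(C)$ is its complementary summand.

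The heart of the proof is then the Du Bois analogue of Grauert--Riemenschneider vanishing: one must show that $\bR\pi_*\omega_\cZ^\bullet$ is concentrated well enough that the split surjection $\mathrm{tr}$ onto $\omega_X^\bullet$ is forced to be a quasi-isomorphism. This rests on a duality for the graded pieces of the Du Bois complex — the singular analogue of the Serre duality $\bbD(\Omega^p)\simeq\Omega^{n-p}$ for the de Rham pieces of a smooth variety — which in turn is underpinned by the degeneration of the Hodge-to-de Rham spectral sequence established earlier (Theorem \ref{degeneration-algebraic}); this degeneration is exactly what supplies the strictness that makes the Du Bois complex behave, under duality, like the de Rham complex of a smooth variety. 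Because $\mathrm{tr}$ is generically an isomorphism (it is so over the smooth locus) and is a split surjection between objects controlled by this self-duality, its complementary summand $\bbD_X(C)$ must vanish; dualizing back gives $C\simeq 0$, i.e.\ $\phi$ is a quasi-isomorphism and $X$ is Du Bois.

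I expect the main obstacle to be this duality/concentration step. Two things must be arranged: setting up Grothendieck duality for the poset-scheme morphism $\pi$ (in particular the twisted right adjoint attached to the functors $\bL\pi^*,\bR\pi_*$ of the previous section) and identifying $\bbD_X(\phi)$ with the trace; and then extracting from the Hodge-to-de Rham degeneration the precise self-duality of $\underline\Omega^0_X=\bR\pi_*\cO_\cZ$ that converts the split surjection $\mathrm{tr}$ into an isomorphism. The reduction to $C\simeq 0$ and the formal dualization of $\phi$ and $\rho$ are routine; the delicate point is that the Hodge-theoretic input must be applied at the sheaf level over an arbitrary (possibly affine) base rather than merely to the global cohomology of projective poset schemes, and it is here that one genuinely needs Kovács' theory of the dual Du Bois complex.
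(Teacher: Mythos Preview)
The paper does not give a proof; it simply refers to \cite{Kov}. So the comparison is against Kov\'acs' actual argument.

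Your outline imports the duality/Grauert--Riemenschneider strategy that Kov\'acs used for his \emph{rational} singularities splitting criterion and tries to transplant it to the Du Bois setting. That transplant does not work. The two ingredients you invoke --- a ``Du Bois analogue of Grauert--Riemenschneider vanishing'' controlling $\bR\pi_*\omega_\cZ^\bullet$, and a sheaf-level self-duality of $\underline\Omega^0_X$ extracted from Theorem \ref{degeneration-algebraic} --- do not exist in the form you need. Theorem \ref{degeneration-algebraic} is a statement about \emph{global cohomology} of a projective poset scheme; it yields no isomorphism of the type $\bbD_X(\underline\Omega^p_X)\simeq\underline\Omega^{n-p}_X[n]$, which is false for general singular $X$. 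And on a cubical hyperresolution the components $Z_\alpha$ have varying dimensions, so $\omega_\cZ^\bullet$ is a genuine complex and there is no concentration result for $\bR\pi_*\omega_\cZ^\bullet$ analogous to GR vanishing. You flag this step as ``the main obstacle'', but it is not a technicality to be filled in --- it is the entire content of the theorem, and the proposed route does not reach it.

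Kov\'acs' actual argument is Hodge-theoretic and avoids Grothendieck duality altogether. The key input (already in Du Bois) is that for proper $X$ the natural map
\[
H^i(X,\cO_X)\;\longrightarrow\; H^i(X,\underline\Omega^0_X)
\]
is \emph{surjective} for every $i$: after analytification the composite $\bbC_{X^{\an}}\to\cO_{X^{\an}}\to(\underline\Omega^0_X)^{\an}$ agrees with the quotient map $\underline\Omega^\bullet_{X^{\an}}\to\gr_F^0\underline\Omega^\bullet_{X^{\an}}$, which is surjective on hypercohomology by degeneration of the Hodge spectral sequence for the filtered Du Bois complex. Your splitting hypothesis makes the displayed map split injective; together these give $H^\bullet(X,C)=0$. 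One then upgrades to $C\simeq 0$ by twisting with powers of an ample line bundle and cutting by hyperplanes --- exactly the mechanism of Lemma \ref{apply-serre} used in the proof of Theorem \ref{descent}. No dualizing complex, no GR-type vanishing, no poset-scheme Grothendieck duality is required.
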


\begin{proof} See \cite{Kov}.
\end{proof}

The notion of Du Bois singularities characterizes  the existence of
categorical resolutions by smooth poset schemes as is shown in the
next theorem.

\begin{theo}\label{posetres=DuBois} Let $X$ be a reduced scheme of finite type over a field
of characteristic zero. Then there exists a categorical resolution
of $X$ by a smooth poset scheme (Definition
\ref{def-cat-res-by-poset-schemes}) if and only if $X$ has Du Bois
singularities.
\end{theo}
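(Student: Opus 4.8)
The plan is to route both implications through the cohomological criterion of Proposition \ref{criterion-cat-res}, which characterizes a categorical resolution $\pi:\cX\to X$ by a smooth poset scheme as exactly the condition that the adjunction unit $\cO_X\to\bR\pi_*\cO_{\cX}$ be an isomorphism in $D(X)$. Since we work over a field of characteristic zero, cubical hyperresolutions exist, and a cubical hyperresolution $\rho:\cZ\to X$ is itself a smooth poset scheme essentially of finite type, so $D(\cZ)$ is smooth by Theorem \ref{smooth=smooth}; moreover the direct image $\bR\rho_*$ occurring in the definition of Du Bois singularities is precisely the poset-scheme direct image developed earlier. For the ``if'' direction, suppose $X$ is Du Bois and pick such a $\rho:\cZ\to X$. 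By definition the unit $\cO_X\to\bR\rho_*\cO_{\cZ}$ is a quasi-isomorphism, so the implication $2)\Rightarrow 1)$ of Proposition \ref{criterion-cat-res} shows that $\rho$ is a categorical resolution by a smooth poset scheme.

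For the ``only if'' direction, assume $\pi:\cX\to X$ is a categorical resolution with $\cX$ smooth, so that $\cO_X\simeq\bR\pi_*\cO_{\cX}$ by Proposition \ref{criterion-cat-res}. Fix a cubical hyperresolution $\rho:\cZ\to X$. By Kov\'acs' criterion (Theorem \ref{Kovac}) it suffices to produce a left inverse in $D(X)$ to the augmentation $\cO_X\to\bR\rho_*\cO_{\cZ}$. To manufacture such a splitting I would first hyperresolve the poset scheme $\cX$ itself: choose a cubical hyperresolution $\mu:\cW\to\cX$ of the $S$-scheme $\cX$, which is again a smooth poset scheme. Here $\mu$ is the $\tau$-morphism for the projection $\tau:\square_r\times S\to S$, so by Lemma \ref{direct-image-fiber} the direct image $\bR\mu_*\cO_{\cW}$ is computed fiberwise, and by Proposition \ref{fiber-hyperres} each fiber $\cW_{\bullet\alpha}\to X_\alpha$ is a hyperresolution of the \emph{smooth} scheme $X_\alpha$. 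Since smooth schemes are Du Bois with trivial Du Bois complex (Remark \ref{DuBois-sing}), this gives $\bR\mu_*\cO_{\cW}\simeq\cO_{\cX}$, and hence for the composite $g:=\pi\mu:\cW\to X$ we obtain $\bR g_*\cO_{\cW}\simeq\bR\pi_*\bR\mu_*\cO_{\cW}\simeq\bR\pi_*\cO_{\cX}\simeq\cO_X$, with the adjunction unit $\cO_X\to\bR g_*\cO_{\cW}$ corresponding to the identity under this identification.

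Next I would invoke the functoriality of the Du Bois complex, extended to morphisms of poset schemes by the direct/inverse image formalism developed in the earlier sections. Attached to $g:\cW\to X$ there is a natural morphism $\bR\rho_*\cO_{\cZ}\to\bR g_*\cO_{\cW}$ (the universal property realizing the Du Bois complex as the initial such pushforward, using that $\cW$ is smooth so its own Du Bois complex is $\cO_{\cW}$), and this morphism is compatible with the two augmentations out of $\cO_X$. Composing it with the isomorphism $\bR g_*\cO_{\cW}\simeq\cO_X$ found above yields a map $\bR\rho_*\cO_{\cZ}\to\cO_X$ whose precomposition with $\cO_X\to\bR\rho_*\cO_{\cZ}$ equals the canonical map $\cO_X\to\bR g_*\cO_{\cW}\simeq\cO_X$, i.e. the identity. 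This is the desired left inverse, and Theorem \ref{Kovac} then forces $X$ to be Du Bois.

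The main obstacle is the construction of the comparison morphism $\bR\rho_*\cO_{\cZ}\to\bR g_*\cO_{\cW}$ together with its compatibility with the augmentation unit: one must lift the standard functoriality of the (filtered) Du Bois complex from ordinary schemes to the poset-scheme setting and verify the relevant commuting triangle. Concretely I expect this to be handled by dominating both the hyperresolution $\cZ$ and the smooth poset scheme $\cW$ by a common cubical hyperresolution of $X$, using the connectedness of the category of hyperresolutions from \cite{LNM1335}, and then comparing the resulting direct images of structure sheaves; the descent and degeneration results established earlier guarantee that all of these pushforwards compute the same Du Bois complex, from which the comparison map and its compatibility with the augmentation follow.
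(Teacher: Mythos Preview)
Your strategy matches the paper's: the ``if'' direction is identical, and for ``only if'' both you and the paper use Kov\'acs' criterion (Theorem \ref{Kovac}), hyperresolve $\cX$, and exploit that smooth schemes are Du Bois to get $\bR\mu_*\cO_{\cW}\simeq\cO_{\cX}$ fiberwise via Lemma \ref{direct-image-fiber} and Proposition \ref{fiber-hyperres}.

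The difference is in how the comparison map is produced. You fix a hyperresolution $\rho:\cZ\to X$ and a separate hyperresolution $\mu:\cW\to\cX$, and then must manufacture a morphism $\bR\rho_*\cO_{\cZ}\to\bR g_*\cO_{\cW}$ compatible with the augmentations; you correctly flag this as the main obstacle and propose to resolve it by dominating $\cZ$ and $\cW$ by a common hyperresolution. The paper sidesteps this entirely by a simple device: it adjoins $X$ to $\cX$ as a new minimal element, forming the augmented $S^+$-scheme $\cX^+$ with $X_0=X$, and then takes a \emph{single} cubical hyperresolution $\cY^+\to\cX^+$. By Proposition \ref{fiber-hyperres} the fiber over $0$ is a hyperresolution $\pi_0:\cY_0\to X$, the restriction over $S$ is a hyperresolution $\pi:\cY\to\cX$, and the poset structure supplies a morphism $\tilde\sigma:\cY\to\cY_0$ making the obvious square commute. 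The comparison map is then simply $\bR(\pi_0)_*$ applied to the adjunction unit $\cO_{\cY_0}\to\bR\tilde\sigma_*\cO_{\cY}$, and its compatibility with the augmentations out of $\cO_X$ is automatic because the composite adjunction unit for $\pi_0\circ\tilde\sigma=\sigma\circ\pi$ factors as $\cO_X\to\bR(\pi_0)_*\cO_{\cY_0}\to\bR(\pi_0)_*\bR\tilde\sigma_*\cO_{\cY}$. Since the composite is an isomorphism, the first arrow has a left inverse and Kov\'acs' criterion applies.

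So your final suggestion (``dominate by a common hyperresolution'') is exactly right, and the paper's trick of hyperresolving $\cX^+$ rather than $\cX$ and $X$ separately is the cleanest way to implement it: the common domination and the commuting triangle come for free, with no appeal to a universal property of the Du Bois complex for poset-scheme targets.
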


\begin{proof} One direction is clear: if $X$ has Du Bois singularities
 and $\pi :\cZ \to X$ is its hyperresolution then by Proposition
 \ref{criterion-cat-res} $\pi$ is a
categorical resolution of $X$ by the smooth poset scheme $\cZ.$

Vice versa, assume that $S$ is a poset, $\cX$ is a smooth $S$-scheme
and $\sigma :\cX \to X$ is a categorical resolution. Consider the
augmented $S^+:=S\cup \{0\}$-scheme $\cX ^+$ defined by $\sigma$ (so
that $X_0=X$). A choice of a hyperresolution of $\pi :\cY \to \cX
^+$ induces a commutative diagram of poset schemes
$$\begin{array}{ccc}
\cY & \stackrel{\pi}{\longrightarrow} & \cX \\
\downarrow \tilde{\sigma}& & \downarrow \sigma\\
\cY _0 & \stackrel{\pi _0}{\longrightarrow} & X
\end{array}
$$
which is compatible with the diagram of projections of posets
$$\begin{array}{ccc}
\square _n\times S & \to & S\\
\downarrow & & \downarrow\\
\square _n & \to & \{0\}
\end{array}
$$
and such that $\pi _0$ (and $\pi $) are hyperresolutions
(Proposition \ref{fiber-hyperres}).

By our assumption the adjunction map $\cO _{X}\to \bR \sigma _*\cO
_{\cX}$ is an isomorphism, and we want to prove that the adjunction
morphism $\cO _{X}\to \bR (\pi _0)_*\cO _{\cY _{0}}$ is an
isomorphism. By Theorem \ref{Kovac} it suffices to prove that this
last map has a left inverse.

Since the poset scheme $\cX$ is smooth we conclude by Remark
\ref{DuBois-sing}, Proposition \ref{fiber-hyperres} and Lemma
\ref{direct-image-fiber} that the map $\cO _{\cX}\to \bR \pi _{*}\cO
_{\cY }$ is an isomorphism. Thus the adjunction map $\cO _X\to \bR
(\sigma \cdot \pi)_*\cO _{\cY}=\bR (\pi _0\cdot \tilde{\sigma})\cO
_{\cY}$ is an isomorphism. But this last map is the composition of
the adjunction maps $\cO _X\to \bR (\pi _0)_*\cO_{\cY _0}\to \bR
(\pi _0)_*\cdot\bR (\tilde{\sigma})_*\cO_{\cY}.$ Hence the map $\cO
_{X}\to \bR (\pi _0)_*\cO _{\cY _{0}}$ has a left inverse. This
proves the theorem.
\end{proof}

Cubical hyperresolutions give more: one can define the de Rham
complex of a singular algebraic variety $X.$ Namely, choose a
hyperresolution $\pi :\cZ \to X$ and define the de Rham-Du Bois
complex $\underline{\Omega}^\bullet _X:=\bR \pi _*\Omega ^\bullet
_{\cZ}.$ This complex consists of $\cO _X$-modules and has the
differential which is a differential operator of order 1. It has
coherent cohomology and is well defined (independent of the choice
of a hyperresolution) up to a quasi-isomorphism in the appropriate
derived category  \cite{LNM1335},Ex.3. There exists a canonical
morphism of filtered complexes from the usual de Rham complex
$\Omega ^\bullet _X$ to $\underline{\Omega}^\bullet _X$ which is a
quasi-isomorphism if $X$ is smooth.

If $X$ is a reduced separated complex scheme, then the analytization
$(\underline{\Omega}^\bullet _X)^{\an}=\underline{\Omega}^\bullet
_{X^{\an}}$ is a resolution of the constant sheaf $\bbC _{X^{\an}}.$

The stupid filtration of the complex $\Omega ^\bullet _{\cZ}$
induces a filtration on the de Rham-Du Bois complex and
$\underline{\Omega}^\bullet _X$ is well defined even as a filtered
complex. The associated graded pieces are
$\underline{\Omega}^i_X:=\gr ^i\underline{\Omega}^\bullet _X=\bR \pi
_*\Omega ^i_{\cZ}.$ If $X$ is proper then this filtration induces
the Hodge filtration on $H^\bullet (X^{\an},\bbC).$

We will prove in Theorem \ref{descent} below that for a reduced
complex projective scheme $X$ the filtered complex
$\underline{\Omega}^\bullet _X$ can be defined as $\bR \sigma
_*\Omega ^\bullet _{\cX},$ where $\cX$ is a smooth complex
projective poset scheme and $\sigma :\cX \to X$ is a morphism such
that $\bR \sigma ^{\an}_*\bbC _{\cX ^{\an}}=\bbC _{X^{\an}}.$

\section{Examples of categorical resolutions by smooth poset
schemes}\label{examples}

Let $Y$ be a reducible scheme with irreducible components
$Y_1,...,Y_n.$ Assume that for each $1\leq k\leq n$ and each subset
$\alpha =\{i_1,...i_k\}\subset \{1,...n\}$ the scheme
$$X_{\alpha}:=\bigcap _{j=1}^kY_{i_j}$$
is smooth. (In particular the components $Y_i$ are smooth.) Let $S$
be the poset of nonempty subsets of $\{1,...,n\}$ with the natural
partial ordering by inclusion. Let $\cX =\{ X_{\alpha }\}$ be the
corresponding smooth poset scheme with the maps $f_{\alpha
\beta}:X_{\alpha }\to X_{\beta}$ being the obvious inclusions. Let
$\pi :\cX \to Y$ be the natural morphism.

\begin{prop} The functor $\bL \pi ^*:D(Y)\to D(\cX)$ is a
categorical resolution of singularities, i.e. the functor
$$\bL \pi^*:\Perf (Y)\to \Perf (\cX)$$
is full and faithful.
\end{prop}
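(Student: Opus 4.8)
The plan is to deduce everything from the criterion of Proposition \ref{criterion-cat-res}. Since all $X_\alpha$ are smooth, $\cX$ is a smooth poset scheme, and the assertion that $\bL\pi^*:\Perf(Y)\to\Perf(\cX)$ is fully faithful is exactly the statement that $\pi$ is a categorical resolution (Definition \ref{def-cat-res-by-poset-schemes}). By Proposition \ref{criterion-cat-res} this is equivalent to the adjunction $\cO_Y\to\bR\pi_*\cO_{\cX}$ being a quasi-isomorphism, and by the equivalence $1)\Leftrightarrow 3)$ it suffices to check, on each affine $W=\Spec B\subset Y$, that $H^0(W,\cO_W)\to H^0(\cX_W,\cO_{\cX_W})$ is an isomorphism and $H^j(\cX_W,\cO_{\cX_W})=0$ for $j>0$. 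The problem is thus local on $Y$ and reduces to computing the cohomology of the structure sheaf of $\cX_W$. (I take $Y$ reduced, so that $\cO_Y\hookrightarrow\bigoplus_i\cO_{Y_i}$.)

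First I would compute $H^\bullet(\cX_W,\cO_{\cX_W})$ by the standard spectral sequence of Example \ref{example}, using for $\cO_{\cX}$ the simplicial resolution $K_p=\bigoplus_{|\alpha|=p+1}j_{\alpha+}\cO_{X_\alpha}$ with the alternating restriction (face) differentials. This is a resolution in $\cM$: evaluated at a component $\beta\in S$ it is $\cO_{X_\beta}$ tensored with the augmented simplicial chain complex of the full simplex on the vertex set $\beta$, which is acyclic because a simplex is contractible. As $W$ is affine, every $X_\alpha\cap W$ is affine, so $H^{>0}(X_\alpha\cap W,\cO)=0$ and the spectral sequence collapses to the single complex $C^p=\bigoplus_{|\alpha|=p+1}\Gamma(X_\alpha\cap W,\cO)$ with restriction differentials. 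Globally this says that $\bR\pi_*\cO_{\cX}$ is represented by the \v{C}ech complex of the closed cover $Y=\bigcup_iY_i$,
$$\bigoplus_i\iota_{i*}\cO_{Y_i}\to\bigoplus_{i<j}\iota_{ij*}\cO_{Y_i\cap Y_j}\to\cdots,$$
the adjunction map $\cO_Y\to\bR\pi_*\cO_{\cX}$ being the natural augmentation. (The same complex drops out of the inverse-limit description of the direct image $\pi_*$.)

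The proposition then amounts to a single local statement: the augmented \v{C}ech complex
$$0\to\cO_Y\to\bigoplus_i\iota_{i*}\cO_{Y_i}\to\bigoplus_{i<j}\iota_{ij*}\cO_{Y_i\cap Y_j}\to\cdots$$
is exact, i.e. resolves $\cO_Y$. I would test this on stalks, where only the finitely many components through a fixed point intervene, so that it becomes a distributivity property of the ideal sheaves $I_{Y_i}$: that $\cO_Y$ is precisely the sheaf of sections of the $\cO_{Y_i}$ matching on overlaps, together with vanishing of the higher \v{C}ech groups. This is the hard part, and is where the whole force of the smoothness hypothesis must be spent. My plan for it is a local normal form: smoothness of every multiple intersection $X_\alpha$ should force the $Y_i$ to meet, \'etale- or formally-locally, like coordinate subspaces, so that the ideals $I_{Y_i}$ are generated by subsets of one common regular system of parameters. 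For such a coordinate arrangement the lattice generated by the $I_{Y_i}$ under sum and intersection is distributive, and exactness of the \v{C}ech complex then follows by a Mayer--Vietoris induction on $n$, writing $Y=(Y_1\cup\cdots\cup Y_{n-1})\cup Y_n$ and using that the scheme-theoretic intersection $(Y_1\cup\cdots\cup Y_{n-1})\cap Y_n$ equals $\bigcup_{i<n}(Y_i\cap Y_n)$ (this is exactly the distributivity). Producing the simultaneous coordinatization out of the smoothness of all the $X_\alpha$ is the delicate point I expect to cost the most effort, and it is essentially the only place where more than formal manipulation is needed; I would want to confirm that bare smoothness of the intersections genuinely yields this normal form, since it is conceivable that one must in addition know that the components cross normally.
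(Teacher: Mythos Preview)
Your strategy coincides with the paper's almost verbatim: reduce via Proposition~\ref{criterion-cat-res} to an affine $Y$, resolve $\cO_{\cX}$ by the simplicial complex $K_p=\bigoplus_{|\alpha|=p+1}j_{\alpha+}\cO_{X_\alpha}$, use affineness of the $X_\alpha$ to identify $\bR\Hom(\cO_{\cX},\cO_{\cX})$ with the \v{C}ech complex
\[
0\to\bigoplus_{|\beta|=1}H^0(X_\beta,\cO_{X_\beta})\to\bigoplus_{|\alpha|=2}H^0(X_\alpha,\cO_{X_\alpha})\to\cdots,
\]
and then claim that this computes $H^0(Y,\cO_Y)$. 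Your verification that $K_\bullet$ really resolves $\cO_{\cX}$ (by evaluating at each $\beta$ and recognizing the augmented chain complex of a simplex) is more careful than the paper, which simply calls the resolution ``obvious''.

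The difference is in the final step. The paper's proof ends with the bare assertion that the \v{C}ech complex ``is quasi-isomorphic to $H^0(Y,\cO_Y)$'', with no further argument. You correctly single this out as the substantive point and sketch a plan (\'etale-local coordinate normal form $\Rightarrow$ distributivity of the ideals $I_{Y_i}$ $\Rightarrow$ Mayer--Vietoris induction). Your closing worry---whether smoothness of all multiple intersections alone forces a simultaneous coordinate picture, or whether an explicit normal-crossings hypothesis is implicitly being used---is a fair one, and the paper does not address it. So you are not missing anything the paper supplies; rather, you have isolated the step the paper leaves to the reader and have gone further in outlining how one would actually fill it.
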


\begin{proof} By Proposition \ref{criterion-cat-res}
we may assume that $Y$ is affine and we
only need to prove that the map $\Ext (\cO _Y,\cO _Y) \to \Ext (\cO
_{\cX},\cO _{\cX})$ is an isomorphism.

We have $\Ext ^i(\cO _Y,\cO _Y)=0$ for $i\neq 0.$ On the other hand
we have the obvious complex in $C(\cX)$
$$C(\cO _{\cX}):=... \to \bigoplus _{\vert \alpha \vert =2}
j_{\alpha +}(\cO _{\cX})_{\alpha} \to \bigoplus _{\vert \beta \vert
=1}j_{\beta +}(\cO _{\cX})_{\beta} \to 0,$$ which is a resolution of
$\cO _{\cX}.$ Since all schemes $X_{\alpha}$ are affine we have
$\Hom (C(\cO _{\cX}), \cO_{\cX})=\bR \Hom (\cO _{\cX},\cO _{\cX})$
(Example \ref{example}). But $\Hom (C(\cO _{\cX}),\cO _{\cX})$ is
the complex
$$0\to \bigoplus _{\vert \beta \vert =1}H^0(X_\beta, \cO
_{X_{\beta}}) \to \bigoplus _{\vert \alpha \vert =2}H^0(X_\alpha,
\cO _{X_{\alpha}}) \to ...$$ which is quasi-isomorphic to $H^0(Y,
\cO_Y).$
\end{proof}

\subsection{Categorical resolution of the cone over a plane cubic}
Here we show how smooth poset schemes can be used to construct a
categorical resolution of the simplest nonrational singularity - the
cone over a smooth plain cubic.

Let $C\subset \bbP^2$ be a smooth curve of degree 3 (and genus 1)
and $Y\subset \bbP ^3$ be the projective cone over $C.$ So $Y$ is a
cubic surface with a singular point $p$ - the vertex of the cone. We
have
$$H^i(Y,\cO _Y)=\left\{ \begin{array}{ll}
k, & \text{if i=0}\\
0, & \text{otherwise.}\\
\end{array} \right.
                       $$
Let $f:X\to Y$ be the blowup of the vertex, so that $X$ is a smooth
ruled surface over the curve $C.$ Denote by
$i:E=f^{-1}(p)\hookrightarrow X$ the inclusion of the exceptional
divisor. We have
$$H^i(X,\cO _X)=\left\{ \begin{array}{ll}
k, & \text{if i=0,1}\\
0, & \text{otherwise,}\\
\end{array} \right.
                       $$
and the pullback map $i^*:H ^\bullet(X,\cO _X)\to H ^\bullet (E,\cO
_E)$ is an isomorphism.

Consider the following smooth poset scheme $\cX$
$$\begin{array}{ccc}
 E & \to  & X\\
 \downarrow &  &\\
q & &
 \end{array}
 $$
 where $q=Spec k,$ and the map $E\to X$ is the embedding $i.$ Denote
 by $\pi :\cX \to Y$ the obvious morphism which extends the blowup
 $f:X\to Y.$

\begin{prop} $\bL \pi^*:D(Y)\to D(\cX)$ is a categorical resolution
of singularities, i.e. the functor
$$\bL \pi ^*:\Perf (Y)\to \Perf (\cX)$$
is full and faithful.
\end{prop}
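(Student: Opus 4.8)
The plan is to verify condition 3) of Proposition~\ref{criterion-cat-res}: for every affine open $W\subset Y$ the natural map $H^0(W,\mathcal{O}_W)\to H^0(\mathcal{X}_W,\mathcal{O}_{\mathcal{X}_W})$ is an isomorphism and $H^j(\mathcal{X}_W,\mathcal{O}_{\mathcal{X}_W})=0$ for $j>0$. Since $E$ and $q$ both lie over the vertex $p$ and $f$ is an isomorphism over $Y\setminus\{p\}$, for $W\not\ni p$ the fibre of $\mathcal{X}$ over $W$ reduces to the single scheme $\mathcal{X}_W=X_W:=f^{-1}(W)\cong W$, so the condition is automatic. Thus I only need to treat an affine $W\ni p$, where $\mathcal{X}_W=\{E\to X_W,\ E\to q\}$ keeps the same combinatorics as $\mathcal{X}$ (both $E$ and $q$ sit over $p$). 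First I would record the explicit resolution of the structure sheaf. The top element of the poset is $E$, with $X$ and $q$ below it and incomparable; the functors $j_{\alpha+}$ give $j_{X+}\mathcal{O}_{X_W}$ (equal to $\mathcal{O}_{X_W}$ on $X_W$ and to $i^*\mathcal{O}_{X_W}=\mathcal{O}_E$ on $E$), $j_{q+}\mathcal{O}_q$ (equal to $k$ on $q$ and to $\mathcal{O}_E$ on $E$), and $j_{E+}\mathcal{O}_E$ (supported on $E$). Comparing components one checks that the sum map is a short exact sequence in $C(\mathcal{X}_W)$
\[
0\to j_{E+}\mathcal{O}_E\to j_{X+}\mathcal{O}_{X_W}\oplus j_{q+}\mathcal{O}_q\to \mathcal{O}_{\mathcal{X}_W}\to 0,
\]
the kernel on $E$ being the antidiagonal copy of $\mathcal{O}_E$. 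This is a resolution of the type required by Example~\ref{example}.

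Next I would run the associated standard spectral sequence. By Example~\ref{example} its $E_1$-complex is
\[
H^\bullet(X_W,\mathcal{O}_{X_W})\oplus H^\bullet(q,\mathcal{O}_q)\stackrel{d_1}{\longrightarrow}H^\bullet(E,\mathcal{O}_E),
\]
where $d_1$ is the difference of the restriction $i^*$ and the pullback along $E\to q$. In cohomological degree $0$ this is $B\oplus k\to k$, $(a,b)\mapsto a(p)-b$ (with $B=H^0(W,\mathcal{O}_W)$ and $a(p)$ the value at the vertex), which is surjective with kernel $\{(a,a(p))\}\cong B$. In degree $1$ it is the map $i^*\colon H^1(X_W,\mathcal{O}_{X_W})\to H^1(E,\mathcal{O}_E)$, and all higher degrees vanish. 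Here I use that $R^1f_*\mathcal{O}_X$ is the skyscraper $k_p$ and that $H^1(X_W,\mathcal{O}_{X_W})\to H^1(E,\mathcal{O}_E)$ is an isomorphism, the local form of the global statement $i^*\colon H^\bullet(X,\mathcal{O}_X)\xrightarrow{\ \sim\ }H^\bullet(E,\mathcal{O}_E)$ recorded above. Hence $d_1$ is surjective in degree $0$ and an isomorphism in degree $1$, so the only surviving term is $E_2^{0,0}=B$; the sequence degenerates, giving $H^0(\mathcal{X}_W,\mathcal{O}_{\mathcal{X}_W})=B=H^0(W,\mathcal{O}_W)$ and $H^{>0}(\mathcal{X}_W,\mathcal{O}_{\mathcal{X}_W})=0$. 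This is exactly condition 3), and Proposition~\ref{criterion-cat-res} then yields that $\pi$ is a categorical resolution.

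The hard part will be the local input $H^1(X_W,\mathcal{O}_{X_W})\xrightarrow{\ \sim\ }H^1(E,\mathcal{O}_E)$ over an affine $W\ni p$, i.e. the identification of $R^1f_*\mathcal{O}_X$ with the skyscraper $H^1(E,\mathcal{O}_E)=k$ at the vertex, together with the compatibility of its formation with restriction to $E$. This is precisely where the non-rationality of the singularity lives: for rational singularities $R^1f_*\mathcal{O}_X=0$ and the blowup $\{E\to X\}$ alone would already resolve, whereas here the extra vertex $q$ is exactly what cancels the leftover $H^0(E)$ in the spectral sequence. I would establish this isomorphism by the theorem on formal functions, using that $X$ is the total space of a negative line bundle over $E\cong C$, so that $H^1(E,N^{\otimes(-n)})=H^1(C,L^{\otimes n})=0$ for $n\geq 1$ and the inverse system computing $(R^1f_*\mathcal{O}_X)^{\wedge}_p$ stabilizes at $H^1(E,\mathcal{O}_E)$.
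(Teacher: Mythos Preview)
Your proof is correct and follows the same strategy as the paper: reduce via Proposition~\ref{criterion-cat-res}, write down the two--step resolution of $\cO_{\cX}$ by the sheaves $j_{\alpha+}\cO_{X_\alpha}$, and analyze the resulting $E_1$--complex $H^\bullet(X_W,\cO_{X_W})\oplus k \to H^\bullet(E,\cO_E)$.

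The only real difference is the localization step. The paper observes that $\pi$ is an isomorphism off $p$, so the adjunction $\cO_Y\to\bR\pi_*\cO_{\cX}$ need only be tested on a single affine neighborhood of $p$, and it simply takes the full affine cone $Y_0$. Over $Y_0$ the blowup $X_0$ is the total space of the line bundle $\cO_C(-1)$, so one can read off $H^1(X_0,\cO_{X_0})=\bigoplus_{n\ge 0}H^1(C,\cO_C(n))=H^1(C,\cO_C)$ directly and see that restriction to $E$ is the projection onto the $n=0$ summand; the paper just asserts the isomorphism. You instead verify condition~3) for every affine $W\ni p$, which forces you to identify $(R^1f_*\cO_X)_p$ via formal functions. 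That argument is fine (and your vanishing $H^1(C,\cO_C(n))=0$ for $n\ge 1$ is exactly what makes both computations work), but it is a bit heavier than the paper's choice. One small point you use implicitly: $H^0(X_W,\cO_{X_W})=H^0(W,\cO_W)$ needs $f_*\cO_X=\cO_Y$, i.e.\ normality of the cone, which holds since a smooth plane cubic is projectively normal.
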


\begin{proof} Note that the map $\pi $ is an isomorphism away from
the point $p\in Y.$ So we may replace $Y$ by the corresponding {\it
affine} cone $Y_0$ over $C,$ $f_0:X_0\to Y_0$ is still the blowup of
the vertex and the rest is the same. Denote the corresponding poset
scheme by $\cX _0.$ Then it suffices to prove that the map
$H^\bullet (Y_0,\cO _{Y_0})\to H^\bullet (\cX _0,\cO _{\cX _0})$ is
an isomorphism. We have $H^i(Y_0,\cO_{Y_0})=0$ for $i\neq 0.$ To
compute $H (\cX _0,\cO _{\cX _0})$ we may use the spectral sequence
as in Example \ref{example}. Then the $E_1$-term is the sum of the
two complexes:
$$k\oplus \Gamma (X _0,\cO_{X_0}) \to \Gamma (E,\cO _E),\quad \text{and}
\quad H^1(X _0,\cO _{X_0}) \to H^1(E,\cO _E).$$ The second map is an
isomorphism, and the first one is surjective with the kernel $\Gamma
(Y_0,\cO _{Y_0}).$
\end{proof}

In view of Theorem \ref{posetres=DuBois} above the last example is a
special case of the following result of Du Bois
\cite{DuB},Prop.4.13.

\begin{prop} Let $W\subset \bP ^m$ be a smooth variety such that for
all $i>0$ and $n>0$ the following holds
$$H^i(W,\cO (n))=0.$$
Then the cone over $W$ has Du Bois singularities.
\end{prop}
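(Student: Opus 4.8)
The plan is to realize the cone as the target of a categorical resolution by a smooth poset scheme and then quote Theorem \ref{posetres=DuBois}. Let $Y\subset\bP^{m+1}$ be the projective cone over $W$, with vertex $p$, let $f\colon X\to Y$ be the blowup of $p$, and let $i\colon E=f^{-1}(p)\hookrightarrow X$ be the exceptional divisor. Since $W$ is smooth, $X$ is smooth and $E\cong W$. Exactly as in the cone-over-a-cubic example above, I would take the smooth poset scheme $\cX$
$$\begin{array}{ccc} E & \to & X\\ \downarrow & & \\ q & & \end{array}$$
with $q=\Spec k$, the horizontal arrow the inclusion $i$ and the vertical arrow the structure map $E\to q$, together with the morphism $\pi\colon\cX\to Y$ extending $f$. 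By Theorem \ref{posetres=DuBois} it is enough to show that $\pi$ is a categorical resolution.

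By Proposition \ref{criterion-cat-res} this amounts to checking that $\cO_Y\to\bR\pi_*\cO_{\cX}$ is a quasi-isomorphism, and because $\pi$ is an isomorphism over $Y\setminus\{p\}$ I may replace $Y$ by the affine cone $Y_0=\Spec R$ with $R=\bigoplus_{n\geq0}H^0(W,\cO(n))$ and $\cX$ by the corresponding poset scheme $\cX_0$. Here $X_0=\mathrm{Bl}_pY_0=\Tot(\cO_W(-1))$, the exceptional divisor $E$ is its zero section, and I must prove that $H^\bullet(Y_0,\cO_{Y_0})\to H^\bullet(\cX_0,\cO_{\cX_0})$ is an isomorphism. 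To compute the right-hand side I would use the standard spectral sequence of Example \ref{example} attached to the resolution
$$0\to j_{E+}\cO_E\to j_{X+}\cO_{X_0}\oplus j_{q+}\cO_q\to\cO_{\cX_0}\to0$$
of the structure sheaf; its $E_1$-complex is
$$H^\bullet(X_0,\cO_{X_0})\oplus k\ \xrightarrow{\ d_1\ }\ H^\bullet(E,\cO_E),$$
where $d_1$ is the difference of the restriction map $i^*$ and the unit $k\to H^0(E,\cO_E)$.

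The computational heart of the argument — and the only place the hypothesis enters — is the identification $H^\bullet(X_0,\cO_{X_0})=\bigoplus_{n\geq0}H^\bullet(W,\cO(n))$ coming from $X_0=\Tot(\cO_W(-1))$, under which $i^*$ becomes the projection onto the summand $n=0$, namely $H^\bullet(W,\cO_W)=H^\bullet(E,\cO_E)$. Since $d_1$ preserves cohomological degree, I would analyze the $E_1$-complex one degree at a time. In each degree $q>0$ the vanishing $H^q(W,\cO(n))=0$ for $n>0$ kills all summands with $n>0$, so the complex reduces to the isomorphism $i^*\colon H^q(W,\cO_W)\to H^q(W,\cO_W)$, whose kernel and cokernel both vanish; in degree $0$ the complex is $R\oplus k\to H^0(W,\cO_W)=k$, with kernel isomorphic to $R$ and zero cokernel, this copy of $R$ being precisely the image of $H^0(Y_0,\cO_{Y_0})$. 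Hence the only nonzero term of $E_2$ is $E_2^{0,0}=R$, the spectral sequence degenerates for want of room, and $H^\bullet(\cX_0,\cO_{\cX_0})$ is $R$ placed in degree $0$, matching $H^\bullet(Y_0,\cO_{Y_0})$ and giving the desired isomorphism. I expect the main obstacle to be this cohomological identification rather than the poset-scheme formalism: one must compute the cohomology of the total space $\Tot(\cO_W(-1))$ together with its internal grading and confirm that restriction to the zero section is the grading-zero projection, since it is exactly this structure, combined with the stated vanishing, that produces the cancellation of the positive-degree cohomology of $X_0$ against that of $E$; once this is in hand, the degeneration and the final isomorphism are formal.
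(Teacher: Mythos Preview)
The paper does not prove this proposition; it simply cites \cite{DuB}, Prop.~4.13. Your approach --- building the same three-vertex poset scheme as in the cubic-cone example and invoking Theorem~\ref{posetres=DuBois} --- is exactly the argument the author has in mind, as the Remark immediately following the proposition makes clear. The identification $H^\bullet(X_0,\cO_{X_0})=\bigoplus_{n\geq 0}H^\bullet(W,\cO(n))$ via the affine projection $X_0=\Tot(\cO_W(-1))\to W$, together with the fact that restriction to the zero section is the projection onto the $n=0$ summand, is correct, and your $E_1$-analysis goes through as written.

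There is one genuine gap. You write $Y_0=\Spec R$ with $R=\bigoplus_{n\geq 0}H^0(W,\cO(n))$, but the affine cone over $W\subset\bP^m$ is $\Spec S$ with $S$ the homogeneous coordinate ring, and $S=R$ precisely when $W$ is projectively normal. If $S\subsetneq R$ then the map $H^0(Y_0,\cO_{Y_0})=S\to H^0(X_0,\cO_{X_0})=R$ is not surjective and the adjunction $\cO_{Y_0}\to\bR\pi_*\cO_{\cX_0}$ already fails at $H^0$; indeed the cone is then not seminormal, hence not Du Bois. For instance, a smooth rational quartic in $\bP^3$ (a generic projection of the rational normal quartic from $\bP^4$) satisfies the stated vanishing hypothesis but is not projectively normal, and the cone over it is not Du Bois. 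So projective normality must be added as a hypothesis --- or ``cone'' must be read as $\Spec R$ --- for the statement to hold and for your argument to go through. With that adjustment your proof is complete.
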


\begin{remark} In fact, using the same construction as in the above
example of the
cone over a smooth cubic curve it is easy to see that the condition
in the last proposition is necessary for the cone over $W$ to be Du
Bois. For example if $W\subset \bbP ^2$ is a smooth curve of degree
$\geq 4,$ then the cone over $W$ is not Du Bois.
\end{remark}

Some other examples of Du Bois singularities are listed in
\cite{St}. For example if $X$ is a reduced curve, then $X$ is Du
Bois if and only if at every singular point of $X$ the branches are
smooth and their tangent directions are independent.

\section{Descent for Du Bois singularities}

\begin{theo}\label{descent} Let $X$ be a reduced complex projective scheme.
Let $\cX $ be a smooth complex projective poset scheme and $\sigma
:\cX \to X$ be a morphism such that the adjunction map $\bbC
_{X^{\an}}\to \bR \sigma ^{\an}_*\bbC _{\cX ^{\an}}$ is a
quasi-isomorphism. Consider the direct image $\bR \sigma _*\Omega
^\bullet _{\cX}.$ This complex has a filtration induced by the
stupid filtration of the de Rham complex $\Omega ^\bullet _{\cX}.$
Then there exists a natural morphism of filtered complexes
$$\tau :\underline{\Omega}^\bullet _X\to \bR \sigma _*\Omega ^\bullet _{\cX}$$
which is a quasi-isomorphism. In particular, the map
$$\gr ^i\tau :\underline{\Omega}^i _X\stackrel{\sim}{\to} \bR \sigma
_*\Omega ^i_{\cX}$$ is a quasi-isomorphism for all $i\geq 0.$
 So if $X$ has Du Bois
singularities, then $\underline{\Omega}^0_X\simeq \cO _X \simeq \bR
\sigma _*\cO _{\cX},$ i.e. the functor $\bL \sigma ^*:D(X)\to
D(\cX)$ is a categorical resolution of singularities.
\end{theo}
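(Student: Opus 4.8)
The plan is to construct $\tau$ exactly as in the proof of Theorem \ref{posetres=DuBois}. First I would form the augmented poset scheme $\cX^+$ over $S^+=S\cup\{0\}$ with $X_0=X$ and the maps to $X$ given by $\sigma$, and choose a cubical hyperresolution $\cW\to\cX^+$. By Proposition \ref{fiber-hyperres} its restriction over $0$ is a hyperresolution $\pi_0:\cW_0\to X$, so that $\underline{\Omega}^\bullet_X=\bR(\pi_0)_*\Omega^\bullet_{\cW_0}$, while its restriction over $S$ is a hyperresolution $\rho:\cW'\to\cX$; these are linked by an induced map $\widetilde{\sigma}:\cW'\to\cW_0$ as in the diagram of Theorem \ref{posetres=DuBois}. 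Since $\cX$ is smooth, its de Rham--Du Bois complex is the honest de Rham complex, i.e. $\bR\rho_*\Omega^i_{\cW'}\simeq\Omega^i_{\cX}$ compatibly with the stupid filtration. Pullback of forms along $\widetilde{\sigma}$ gives $\Omega^\bullet_{\cW_0}\to\bR\widetilde{\sigma}_*\Omega^\bullet_{\cW'}$, and applying $\bR(\pi_0)_*$ together with $\bR(\pi_0)_*\bR\widetilde{\sigma}_*=\bR\sigma_*\bR\rho_*$ yields the filtered morphism $\tau:\underline{\Omega}^\bullet_X\to\bR\sigma_*\Omega^\bullet_{\cX}$.

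It then suffices to prove two things: that $\tau$ is a quasi-isomorphism of the underlying total complexes, and that each $\gr^i\tau$ is a quasi-isomorphism; together these are exactly the statement that $\tau$ is a filtered quasi-isomorphism. Both assertions live in $D^b(\coh X)$, so by the poset GAGA of Subsection \ref{GAGA} it is equivalent to check them after analytification. For the total complex, analytically $\Omega^\bullet_{\cX^{\an}}$ resolves $\bbC_{\cX^{\an}}$ and $(\underline{\Omega}^\bullet_X)^{\an}=\underline{\Omega}^\bullet_{X^{\an}}$ resolves $\bbC_{X^{\an}}$; hence $\bR\sigma^{\an}_*\Omega^\bullet_{\cX^{\an}}\simeq\bR\sigma^{\an}_*\bbC_{\cX^{\an}}\simeq\bbC_{X^{\an}}$, the last step being the descent hypothesis. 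Since $\tau^{\an}$ is compatible with both augmentations to the constant sheaf, it induces the identity on $\bbC_{X^{\an}}$ and is therefore a quasi-isomorphism of total complexes.

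The remaining point is to upgrade this to $\gr^i\tau$ being a quasi-isomorphism for every $i$, and here the degeneration results of Part II are essential. Passing to global cohomology, $\tau$ induces a morphism of the Hodge-to-de Rham spectral sequences of the two filtered complexes, both abutting to $H^\bullet(X^{\an},\bbC)$ via the total quasi-isomorphisms, with $E_1$-terms $\bigoplus_p H^{\bullet-p}(X,\underline{\Omega}^p_X)$ and $\bigoplus_p H^{\bullet-p}(\cX,\Omega^p_{\cX})$. By Theorem \ref{degeneration-algebraic} the spectral sequence for $\cX$ degenerates at $E_2$, and for proper $X$ the de Rham--Du Bois filtration is the Hodge filtration of the Deligne mixed Hodge structure, which also degenerates. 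Because both filtrations on $H^n(X^{\an},\bbC)$ come from smooth projective poset schemes mapping to $X$ with cohomological descent, they coincide with the canonical Deligne--Du Bois filtration (well-definedness of the mixed Hodge structure, obtained by comparing through a common smooth projective refinement), so the induced automorphism of $H^n(X^{\an},\bbC)$ is a morphism of mixed Hodge structures. Deligne's strictness of such morphisms forces it to be a strict filtered isomorphism, whence the Hodge numbers match degree by degree and each $\gr^i\tau$ induces an isomorphism on all cohomology groups.

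The hard part is precisely this matching of the two filtrations, equivalently the identification of $\bR\sigma_*\Omega^\bullet_{\cX}$ with the canonical mixed Hodge complex of $X$: without it a total quasi-isomorphism of filtered complexes can shift the filtration and fail to be graded, which is why the degeneration theorems cannot be bypassed. To turn the cohomology isomorphisms into an honest quasi-isomorphism in $D^b(\coh X)$, I would use the component-wise reduction $\gr^i\tau=\bR(\pi_0)_*(u_i)$ with $u_i:\Omega^i_{\cW_0}\to\bR\widetilde{\sigma}_*\Omega^i_{\cW'}$ checked over each smooth projective component of $\cW_0$, where on a smooth component it reduces to the classical identity $\underline{\Omega}^i\simeq\Omega^i$; together with the Deligne-type decompositions $\underline{\Omega}^\bullet_X\simeq\bigoplus_i\underline{\Omega}^i_X[-i]$ and $\bR\sigma_*\Omega^\bullet_{\cX}\simeq\bigoplus_i\bR\sigma_*\Omega^i_{\cX}[-i]$ furnished by the degeneration, the strict filtered isomorphism of the previous paragraph is carried summand by summand, yielding each $\gr^i\tau$ as a quasi-isomorphism and hence the filtered quasi-isomorphism $\tau$.
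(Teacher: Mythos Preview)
Your construction of $\tau$ and the argument that it is a quasi-isomorphism on the total complexes are essentially the same as in the paper, and your Hodge-theoretic argument that each $H^\bullet(X,\gr^i\tau)$ is an isomorphism is a valid alternative to the paper's Lemma \ref{isom-hypercohom} (which derives the same conclusion directly from the functorial decomposition in Theorem \ref{Hodge-to-deRham-degener-analytic}).

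The gap is the final step, where you pass from the hypercohomology isomorphism $H^\bullet(X,\gr^i\tau)$ to an actual quasi-isomorphism $\gr^i\tau$ in $D^b(\coh X)$. Knowing that a morphism of bounded coherent complexes on a projective scheme induces an isomorphism on hypercohomology is \emph{not} enough to conclude that it is a quasi-isomorphism: already on $\bbP^1$ there are nonisomorphic objects with the same $H^\bullet$. The two devices you invoke to bridge this do not work. First, degeneration of the Hodge-to-de Rham spectral sequence does \emph{not} furnish splittings $\underline{\Omega}^\bullet_X\simeq\bigoplus_i\underline{\Omega}^i_X[-i]$ or $\bR\sigma_*\Omega^\bullet_{\cX}\simeq\bigoplus_i\bR\sigma_*\Omega^i_{\cX}[-i]$ in the derived category of $X$; degeneration is a statement about cohomology, not a formality statement about the filtered complex. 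Second, your ``component-wise reduction'' over the smooth pieces of $\cW_0$ does not establish anything about $u_i:\Omega^i_{\cW_0}\to\bR\widetilde{\sigma}_*\Omega^i_{\cW'}$: this map is not the comparison of a smooth variety with its own Du Bois complex, and controlling it fibrewise over $\cW_0$ says nothing about the pushforward to $X$.

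The paper closes exactly this gap by a different mechanism: Lemma \ref{apply-serre} (a morphism in $D^b(\coh X)$ is a quasi-isomorphism iff it induces hypercohomology isomorphisms after twisting by $L^n$ for all $n\gg 0$), and then a proof by induction on $\dim X$ that $H^\bullet(X,\gr^i\tau\otimes L^n)$ is an isomorphism for every $n\geq 1$. The inductive step restricts the whole diagram to a general divisor $D\in |L^n|$ and uses the two short exact sequences relating $\Omega^i$ on the ambient space to $\Omega^i$ and $\Omega^{i-1}$ on $D$ (functorial in the pair), together with the hypercohomology isomorphism from Lemma \ref{isom-hypercohom} for the untwisted term. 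This twisting-and-hyperplane argument is the substantive missing ingredient in your proposal.
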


\begin{proof}
As in the proof of Theorem \ref{posetres=DuBois} choose a
commutative diagram
\begin{equation}\label{main-diag}\begin{array}{lcl}
\cY & \stackrel{\pi}{\longrightarrow} & \cX \\
\downarrow \tilde{\sigma}& & \downarrow \sigma\\
\cY _0 & \stackrel{\pi _0}{\longrightarrow} & X
\end{array}
\end{equation}
where $\pi _0$ is a hyperresolution and for each scheme $X_\alpha
\in \cX$ the induced morphism $\pi :\pi ^{-1}(X_\alpha)\to
X_{\alpha}$ is also a hyperresolution.

Since each $X_\alpha$ is smooth we have the quasi-isomorphism of
filtered complexes $\Omega ^\bullet_{\cX}\stackrel{\sim}{\to}\bR \pi
_*\Omega ^\bullet _{\cY}.$ It follows that $\bR \sigma _*\Omega
^\bullet _{\cX}\simeq \bR (\sigma \cdot \pi)_*\Omega ^\bullet
_{\cY}=\bR (\pi _0 \cdot \tilde{\sigma})_*\Omega ^\bullet _{\cY}.$
On the other hand by definition $\bR (\pi _0)_*\Omega ^\bullet _{\cY
_0}=\underline{\Omega}^\bullet _X.$ Hence the adjunction morphism
$\theta :\Omega ^\bullet _{\cY _0}\to \bR \tilde{\sigma}_*\Omega
^\bullet _{\cY}$ induces the desired morphism of filtered complexes
\begin{equation}\label{morphism}
\tau :\underline{\Omega}^\bullet _X = \bR (\pi _0)_*\Omega ^\bullet
_{\cY _0}\stackrel{\bR (\pi _0)_*\theta}{\longrightarrow} \bR (\pi_0
\cdot \tilde{\sigma})_*\Omega ^\bullet _{\cY}\simeq \bR \sigma
_*\Omega ^\bullet _{\cX}.
\end{equation}
We will prove that for each $i$ the map
$$\gr ^i\tau :\underline{\Omega}^i _X \to \bR \sigma
_*\Omega ^i _{\cX}$$ is a quasi-isomorphism (hence $\tau$ is a
quasi-isomorphism).

\begin{lemma} \label{isom-hypercohom} For each $i$
the morphism $\gr ^i\tau$ induces an isomorphism on the
hypercohomology
$$H^\bullet(\gr ^i\tau):H^\bullet (X,\underline{\Omega} ^i
_X)\to H^\bullet (X,\bR \sigma _*\Omega ^i _{\cX}).$$
\end{lemma}

\begin{proof} Note that the map $H^\bullet(\gr ^i\tau)$ coincides with
 the inverse image map $H^\bullet (\cY _0,\Omega ^i _{\cY _0})\to
H^\bullet (\cY ,\Omega ^i _{\cY })=H^\bullet (X,\bR \sigma _*\Omega
^i _{\cX}).$

The diagram \ref{main-diag} induces the corresponding
diagram of analytic spaces
\begin{equation}\label{main-diag-analytic}\begin{array}{lcl}
\cY^{\an} & \stackrel{\pi ^{\an}}{\longrightarrow} & \cX ^{\an} \\
\downarrow \tilde{\sigma}^{\an}& & \downarrow \sigma ^{\an}\\
\cY _0^{\an} & \stackrel{\pi _0^{\an}}{\longrightarrow} & X^{\an}
\end{array}
\end{equation}

By Subsection \ref{GAGA} it suffices to show that the corresponding
inverse image map $H^\bullet (\gr ^i\tau ^{ \an}):H^\bullet (\cY
_0^{\an},\Omega ^i _{\cY _0^{\an}})\to H^\bullet (\cY_{\an} ,\Omega
^i _{\cY^{\an}})$ is an isomorphism.

 Since $\pi _0 $ and $\pi $ are cubical hyperresolutions we have
 $\bR (\pi _0^{\an})_*\bbC _{\cY
_0^{\an}}=\bbC _{X ^{\an}}$ and $\bR (\pi ^{\an})_*\bbC _{\cY
^{\an}}=\bbC _{\cX ^{\an}}.$ Thus by our assumption $\bR (\sigma
^{\an}\cdot \pi ^{\an})_*\bbC _{\cY ^{\an}}= \bbC _{X^{\an}}.$ As in
the case of the sheaves $\Omega ^i$  we obtain a natural morphism
$$\tau ^c:\bR (\pi _0^{\an})_*\bbC _{\cY _0^{\an}}\to \bR \sigma
^{\an}_*\bbC _{\cX ^{\an}}$$ which is a quasi-isomorphism (both
sides are quasi-isomorphic to $\bbC _{X^{\an}}$). Hence the map
$$H^\bullet(\cY _0^{\an},\bbC ) \stackrel{H^\bullet (\tau ^c)}
{\longrightarrow}  H^\bullet (\cX ^{\an},\bbC )=H^\bullet (\cY
^{\an},\bbC )$$ is an isomorphism.

By Theorem \ref{Hodge-to-deRham-degener-analytic}
\begin{equation}H^\bullet(\cY _0 ^{\an},\bbC )=
\bigoplus _iH^{\bullet -i}(\cY _0 ^{\an},\Omega ^i _{\cY _0^{\an}}).
\end{equation}
 and similarly for
$\cY.$ The map $H^\bullet (\tau ^c)$ respects this decomposition and
its restriction to the i-th summand is the map $H^\bullet (\gr
^i\tau ^{\an}).$ It follows that $H^\bullet (\gr ^i\tau ^{\an})$ is
also an isomorphism. This proves the lemma.
\end{proof}

\begin{lemma} \label{apply-serre} Let $Y$ be a complex projective scheme with an ample
line bundle $L.$ Let $u:K_1\to K_2$ be a morphism of complexes in
$D^b(cohY).$ Assume that for all $n>>0$ the map $u$ induces an
isomorphism of the hypercohomology
$$H^\bullet (Y,K_1\otimes L^n)\stackrel{\sim}{\longrightarrow}
H^\bullet (Y,K_2\otimes L^n).$$ Then $u$ is a quasi-isomorphism.
\end{lemma}

\begin{proof} See Lemma 3.4 in \cite{LNM1335} (p.139).
\end{proof}

We will prove that the morphism $\gr \tau ^i$ satisfies the
assumptions of Lemma \ref{apply-serre}, which will prove the
theorem.

\begin{prop} Let $L$ be an ample line bundle on $X.$ Then for any
$n\geq 1$ the map $\gr ^i\tau \otimes L^n:\gr ^i\tau
:\underline{\Omega}^i _X\otimes L^n \to (\bR \sigma _*\Omega ^i
_{\cX})\otimes L^n$ induces an isomorphism on hypercohomology
$$H^\bullet (X, \underline{\Omega}^i _X\otimes L^n) \to H^\bullet (X,
(\bR \sigma _*\Omega ^i _{\cX})\otimes L^n).$$
\end{prop}

\begin{proof} We prove the proposition by induction on the dimension
of $X.$ If $\dim X=0,$ then the statement is equivalent to Lemma
\ref{isom-hypercohom}.

We denote by $L$ also the pullbacks of $L$ to the smooth poset
schemes $\cX$ and $\cY _0.$ By the projection formula it suffices to
prove that the natural map
$$H^\bullet (X, \bR \pi _{0 *}(\Omega ^i _{\cY _0}\otimes L^n))
\to H^\bullet (X, \bR \sigma _*(\Omega ^i _{\cX}\otimes L^n))$$ is
an isomorphism.

\begin{lemma} Let $Y$ be a smooth variety, $B\subset Y$ - a smooth
divisor, and $M$ - the corresponding line bundle. Then for each
$i\geq 1$ we have the exact sequences
$$0\to \Omega ^i_Y\to M\otimes \Omega ^i_Y\to M\otimes \Omega
^i_Y\otimes \cO _B\to 0,$$
$$0\to \Omega _B^{i-1}\to M\otimes \Omega ^i_Y\otimes \cO _B\to
M\otimes \Omega ^i_B \to 0.$$ These sequences are functorial with
respect to the pair $(Y,B).$
\end{lemma}

\begin{proof} \cite{LNM1335},p.136.
\end{proof}

Let $D\subset X$ be a general divisor corresponding to $L^n$ for
$n\geq 1.$ Let
\begin{equation}\label{main-diag-divisor}\begin{array}{lcl}
\cZ & \stackrel{\pi}{\longrightarrow} & \cW \\
\downarrow \tilde{\sigma}& & \downarrow \sigma\\
\cZ _0 & \stackrel{\pi _0}{\longrightarrow} & D
\end{array}
\end{equation}
be the restriction of the diagram \ref{main-diag} to $D.$ Since $D$
is general this diagram has similar properties: $\cW$ is a smooth
projective poset scheme, $\pi _0$ is a hyperresolution, and for each
scheme $W_\alpha \in \cW$ the induced morphism $\pi :\pi
^{-1}(W_\alpha)\to W_{\alpha}$ is also a hyperresolution. Also the
adjunction morphism $\bbC _{D^{\an}}\to \bbR \sigma ^{\an}_*\bbC
_{\cW ^{\an}}$ is a quasi-isomorphism.

The exact sequences in the last lemma give rise to similar exact
sequences on poset schemes $\cX$ and $\cY _0$ respectively. Namely,
we have
\begin{equation}\label{d-1}\begin{array}{c}
0\to \Omega ^i_{\cY _0}\to L^n\otimes \Omega ^i_{\cY _0}\to
L^n\otimes
\Omega ^i_{\cY _0}\otimes \cO _{\cZ _0}\to 0,\\
0\to \Omega _{\cZ _0}^{i-1}\to L^n\otimes \Omega ^i_{\cY _0} \otimes
\cO _{\cZ _0}\to L^n\otimes \Omega ^i_{\cZ _0} \to 0, \end{array}
\end{equation} and
\begin{equation}\label{d-2}\begin{array}{c}
 0\to \Omega ^i_{\cX}\to L^n\otimes \Omega ^i_{\cX}\to L^n\otimes
\Omega ^i_{\cX}\otimes \cO _{\cW}\to 0,\\
0\to \Omega _{\cW}^{i-1}\to L^n\otimes \Omega ^i_{\cX} \otimes \cO
_{\cW}\to L^n\otimes \Omega ^i_{\cW} \to 0.\end{array}
\end{equation}

We now push forward these diagrams \ref{d-1} and \ref{d-2} by the
functors $\bR \pi _{0
*}$ and $\bR \sigma _*$ respectively. By functoriality we have a
morphism between the resulting exact triangles on $X.$ On the
hypercohomology this morphism induces an isomorphism in the term
$\Omega ^i_{\cY _0}$ by Lemma \ref{isom-hypercohom}. By induction it
also induces similar isomorphisms in the terms $\Omega _{\cZ
_0}^{i-1}$ and $L^n\otimes \Omega ^i_{\cZ _0}.$ Hence it induces an
isomorphism of hypercohomology in the term $L^n\otimes \Omega
^i_{\cY _0} \otimes \cO _{\cZ _0}$ and thus also in the term
$L^n\otimes \Omega ^i_{\cY _0}$ which proves the proposition and the
theorem.
\end{proof}
\end{proof}

\part{Appendix}

\section{Coherator and the functors $\bL f^*,\bR f_*$}

Probably this appendix contains nothing new but we decided to put
together some "well known" facts for convenience.

Let $X$ be a quasi-compact separated scheme. As usual $QcohX$
denotes the category of quasi-coherent sheaves on $X,$
$C(X)=C(QcohX)$ - the category of complexes over $QcohX,$
$D(X)=D(QcohX)$ - the derived category. We also consider the
category $\Mod _X$ of {\it all} $\cO _X$-modules, its category of
complexes $C(\Mod _X)$ and the corresponding derived category
$D(\Mod _X).$ Let $C_{\qc} (\Mod _X)\subset C(\Mod _X),$
$D_{\qc}(\Mod _X)\subset D(\Mod _X)$ be the full subcategories of
complexes with quasi-coherent cohomologies.

Both $Qcoh X$ and $\Mod _X$ are Grothendieck categories.

The obvious exact functor $\phi: Qcoh X\to \Mod _X$ preserves finite
limits and arbitrary colimits. It has a left-exact right adjoint
functor $Q_X=Q:\Mod _X\to QcohX$ - the {\it coherator}. The functor
$Q$ preserves arbitrary limits and injective objects. The induced
functor $Q:C(\Mod _X)\to C(X)$ preserves h-injectives. One defines
the right derived functor $\bR Q :D(\Mod _X)\to D(X)$ using the
h-injectives.

\begin{prop} \label{equivalences}
The functors $\phi,$ $\bR Q$ induce mutually inverse equivalences of
categories
$$\phi : D(X)\to D_{\qc}(\Mod _X),\quad \bR Q :D_{\qc}(\Mod _X)\to
D(X).$$
\end{prop}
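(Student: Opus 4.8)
The plan is to exhibit $(\phi,\bR Q)$ as an adjoint pair of triangulated functors and then to check that the unit and counit are isomorphisms. Since $\phi\colon Qcoh X\to\Mod_X$ is exact it descends directly to $\phi\colon D(X)\to D(\Mod_X)$, and because $\phi$ preserves quasi-coherence its image lies in $D_{\qc}(\Mod_X)$. The abelian adjunction $(\phi,Q)$, together with the facts that $\phi$ is exact and that $Q$ preserves h-injectives, lifts to a derived adjunction: for $M\in D(X)$ and $K\in D(\Mod_X)$ one has $\Hom_{D(\Mod_X)}(\phi M,K)=\Hom_{D(X)}(M,\bR Q K)$, where $\bR Q K$ is computed by an h-injective resolution of $K$. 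Restricting to $D_{\qc}$, it then suffices to prove two things: that the unit $\eta\colon \Id_{D(X)}\to\bR Q\,\phi$ is an isomorphism (full faithfulness of $\phi$), and that the counit $\varepsilon\colon \phi\,\bR Q\to\Id_{D_{\qc}(\Mod_X)}$ is an isomorphism (essential surjectivity).

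For the unit I would first record the abelian-level identity $Q\phi\cong\Id_{Qcoh X}$, which holds because $\phi$ is simply the fully faithful inclusion of quasi-coherent sheaves into all $\cO_X$-modules. Fixing $N\in D(X)$ and an h-injective resolution $N\xrightarrow{\sim}J$ in $C(X)$, the unit is represented by the canonical map $J\cong Q\phi J\to\bR Q\,\phi J$, so $\eta_N$ is an isomorphism precisely when $\phi J$ is $Q$-acyclic, i.e.\ when $Q\phi J\to\bR Q\,\phi J$ is a quasi-isomorphism. I would reduce this to the statement that an injective object $I\in Qcoh X$ becomes $Q$-acyclic in $\Mod_X$, namely $R^{p}Q(\phi I)=0$ for $p>0$ while $R^{0}Q(\phi I)=I$.

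The main obstacle is exactly this $Q$-acyclicity, together with the parallel computation of the counit, and both are established by reduction to the affine case using that $X$ is quasi-compact and separated. I would choose a finite affine open cover $X=\bigcup_{i=1}^{r}U_i$; separatedness guarantees that all intersections $U_{i_0\cdots i_p}$ are again affine. On an affine scheme $\Spec A$ the category $Qcoh$ is equivalent to $\Mod A$, the coherator is the associated-sheaf functor applied to global sections, and the classical equivalence $D(A)\simeq D_{\qc}(\Mod_{\cO})$ furnishes both the $Q$-acyclicity of injectives and the affine counit isomorphism. One then globalizes by a \v{C}ech / Mayer--Vietoris induction on $r$: since $\eta$ and $\varepsilon$ are morphisms of triangulated functors, the Mayer--Vietoris triangles built from the cover reduce an $r$-fold assertion to the $(r-1)$-fold one and to the affine pieces, the inductive step being precisely where affineness of the intersections is used.

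For the counit I would argue dually: given $K\in D_{\qc}(\Mod_X)$, the map $\varepsilon_K\colon\phi\,\bR Q K\to K$ is a morphism in $D_{\qc}$, and by the cover induction it becomes an isomorphism after restriction to each $U_i$, where it is the affine counit; hence it is an isomorphism on $X$, since the cohomology sheaves of both sides are quasi-coherent and a complex that is locally acyclic is acyclic. Having shown $\eta$ and $\varepsilon$ are isomorphisms, $\phi$ and $\bR Q$ are mutually inverse equivalences, which proves the proposition. I expect the only genuine work to be the affine base case and the bookkeeping of the \v{C}ech induction; everything else is formal, following from the adjunction and from the exactness and h-injective-preservation properties already recorded.
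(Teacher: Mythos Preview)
The paper does not prove this proposition; it simply cites [AlJeLi], Proposition~1.3. Your outline follows the same strategy as that reference: set up the derived adjunction $(\phi,\bR Q)$, reduce to the affine case by a \v{C}ech/Mayer--Vietoris induction over a finite affine open cover (using separatedness so that intersections stay affine), and invoke the known equivalence $D(A)\simeq D_{\qc}(\Spec A)$ on affines.

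There is, however, one genuine gap. You reduce the unit isomorphism to the claim that a single injective $I\in Qcoh X$ satisfies $R^{p}Q(\phi I)=0$ for $p>0$. For a bounded-below complex of injectives this termwise $Q$-acyclicity does yield $Q\phi J\simeq\bR Q\,\phi J$, but for an \emph{unbounded} h-injective $J$ it does not follow formally: termwise acyclicity for a right-derived functor does not propagate to unbounded complexes without further input. The missing ingredient---and this is precisely what [AlJeLi] establish---is that the coherator $Q$ has \emph{finite cohomological dimension} on $\Mod_X$, bounded in terms of the size of a finite affine cover. Once that bound is in hand, a way-out argument (as in \cite{Ha}, Ch.~I) handles the unbounded case. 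Your \v{C}ech induction morally contains this bound, but as written you treat the reduction to single injectives as if it were automatic; you should instead first prove the finite cohomological dimension of $Q$ via the cover induction, and then deduce both unit and counit isomorphisms from it. The same caveat applies to your counit argument: checking $\varepsilon_K$ after restriction to each $U_i$ presupposes that $\bR Q$ commutes with open restriction, which again is a consequence of the finite-dimensionality and affine analysis rather than something given for free.
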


\begin{proof} See for example [AlJeLi],Prop.1.3.
\end{proof}

\begin{lemma} \label{preserve-h-flats}
The functor $\phi :C(X)\to C(\Mod _X)$ preserves h-flats.
\end{lemma}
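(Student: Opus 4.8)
The plan is to reduce the statement to a purely local computation, using the fact that acyclicity of a complex of $\cO_X$-modules can be tested on stalks and that the tensor product commutes with taking stalks. Concretely, for $F\in C(X)$ and any $T\in C(\Mod _X)$ one has $(\phi F\otimes _{\cO_X}T)_x = F_x\otimes _{\cO_{X,x}}T_x$, and a complex of sheaves is acyclic iff all its stalk complexes are. Hence if I can show that each stalk $F_x$ is an h-flat complex of $\cO_{X,x}$-modules whenever $F$ is h-flat in $C(X)$, then for acyclic $T$ each $T_x$ is acyclic, so each $F_x\otimes _{\cO_{X,x}}T_x$ is acyclic, and therefore $\phi F\otimes _{\cO_X}T$ is acyclic — which is exactly what we want. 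So the entire content of the lemma is the implication: $F$ h-flat in $C(X)\Rightarrow$ every stalk $F_x$ is h-flat over $\cO_{X,x}$.

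To prove that, first I would fix $x$ and choose an affine open $U=\Spec A\ni x$ with $\mathfrak p\subset A$ the prime corresponding to $x$, so that $\cO_{X,x}=A_{\mathfrak p}$ and $F_x=\Gamma(U,F)_{\mathfrak p}$. Writing $P:=\Gamma(U,F)$, a complex of $A$-modules, the natural identification $(A_{\mathfrak p}\otimes _A P)\otimes _{A_{\mathfrak p}}N\cong P\otimes _A N$ for any $A_{\mathfrak p}$-complex $N$ shows that $F_x$ is h-flat over $A_{\mathfrak p}$ as soon as $P$ is h-flat over $A$ (localization preserves h-flatness). Since $\Gamma(U,-)\colon \Qcoh U\isomoto \Mod A$ is a monoidal equivalence of abelian categories — it carries $\otimes _{\cO_U}$ to $\otimes _A$ and acyclic complexes to acyclic complexes — the statement ``$P$ is h-flat over $A$'' is equivalent to ``$F|_U=j^*F$ is h-flat in $C(\Qcoh U)$'', where $j\colon U\hookrightarrow X$ is the inclusion.

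Thus the remaining point is that restriction to an affine open preserves h-flatness within the quasi-coherent world. Here I would use that, since $X$ is separated and $U$ affine, the open immersion $j$ is an affine morphism, so $j_*$ is exact on quasi-coherent sheaves. Given an acyclic $S'\in C(\Qcoh U)$, the complex $j_*S'$ is then acyclic in $C(\Qcoh X)$; as $F$ is h-flat, $F\otimes _{\cO_X}j_*S'$ is acyclic; and applying the exact monoidal functor $j^*$ together with the counit isomorphism $j^*j_*\cong \id$ gives $j^*F\otimes _{\cO_U}S' = j^*(F\otimes _{\cO_X}j_*S')$, which is therefore acyclic. Hence $j^*F$ is h-flat, closing the chain.

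I expect the main obstacle to be conceptual rather than computational: the real gap is that h-flatness in $C(X)$ only tests acyclic complexes of quasi-coherent sheaves, whereas h-flatness in $C(\Mod _X)$ must be tested against all acyclic $\cO_X$-module complexes. The device that closes this gap is passing to stalks, where the comparison becomes an honest statement about modules over the local rings $\cO_{X,x}$; the one genuinely geometric input needed along the way is the exactness of $j_*$ for the affine open immersion $j$, which is where separatedness of $X$ enters.
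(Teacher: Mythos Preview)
Your argument is correct and shares the paper's core idea: test acyclicity on stalks and bridge the gap between quasi-coherent and arbitrary $\cO_X$-module complexes by pushing forward from an affine scheme, using separatedness of $X$ to guarantee the relevant morphism is affine. The execution, however, differs. You go in two stages --- first restrict to an affine open $U=\Spec A$ (using that the open immersion $j$ is affine, so $j_*$ is exact on quasi-coherent sheaves) to deduce $F|_U$ is h-flat in $C(\Qcoh U)$, then invoke the monoidal equivalence $\Qcoh U\simeq \Mod A$ and localize at $\mathfrak p$. The paper collapses this into a single step: given acyclic $N\in C(\Mod_X)$ and $x\in X$, it sheafifies the stalk complex $N_x$ on $\Spec\cO_{X,x}$ and pushes forward along $i:\Spec\cO_{X,x}\to X$ directly; since $X$ is separated and the source is affine, $i$ is affine, so $i_*\tilde N_x$ is acyclic in $C(\Qcoh X)$, whence $F\otimes_{\cO_X}i_*\tilde N_x$ is acyclic, and its stalk at $x$ is exactly $F_x\otimes_{\cO_{X,x}}N_x$. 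Your route makes the role of the affine open and the equivalence $\Qcoh U\simeq \Mod A$ more explicit; the paper's is shorter and avoids naming an affine neighborhood at all.
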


\begin{proof} Let $F\in C(X)$ be h-flat, $N\in C(\Mod _X)$ be
acyclic, $x\in X.$ We need to show that the complex of $\cO
_x$-modules $(F\otimes _{\cO _X}N)_x=F_x\otimes _{\cO _x}N_x$ is
acyclic. Let $i:Spec \cO _x\to X$ be the inclusion and
$\tilde{N}_x\in C(Qcoh (Spec \cO _x))$ be the sheafification of the
acyclic complex $N_x$ of $\cO _x$-modules. Then $i_*\tilde {N}_x$ is
an acyclic complex of quasi-coherent sheaves on $X.$ Hence the
complex $F\otimes _{\cO _X}i_*\tilde{N}_x$ is also acyclic. Thus
$F_x\otimes _{\cO _x}N_x=(F\otimes _{\cO _X}i_*\tilde {N}_x)_x$ is
also acyclic.
\end{proof}

Let $f:X\to Y$ be a quasi-compact separated morphism of
quasi-compact separated schemes. One defines the derived functors
$$\bL f^* :D(\Mod _Y)\to D(\Mod _X), \quad \bR f_*:D(\Mod _X)\to
D(\Mod _Y),$$ using h-flats and h-injectives in $C(\Mod _Y)$ and
$C(\Mod _X)$ respectively  [Sp].

We can also define the derived functor $\bL f^*:D(Y)\to D(X)$ using
the h-flats in $C(Y)$ (There are enough h-flats in $C(Y)$
[AlJeLi],Prop.1.1).

\begin{lemma} \label{commute-inverse-image}
There exists a natural isomorphism of functors
 $$\bL f^*\cdot \phi _Y = \phi _X \cdot \bL f^*:D(Y)\to D(\Mod _X).$$
\end{lemma}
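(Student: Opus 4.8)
The plan is to reduce both composites to the \emph{underived} inverse image applied to a single h-flat resolution, exploiting that the inclusion $\phi$ is exact and, crucially, preserves h-flats.

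First I would fix $F\in D(Y)$ and choose an h-flat resolution $P\to F$ in $C(Y)$; such resolutions exist by [AlJeLi],~Prop.~1.1. By the very definition of the quasi-coherent derived functor, $\bL f^*F$ is represented by the complex $f^*P\in C(X)$, where $f^*$ denotes the ordinary (underived) inverse image, which preserves quasi-coherence. Applying the exact inclusion $\phi_X$ then shows that $\phi_X\bL f^*F$ is represented by $\phi_X f^*P\in C(\Mod_X)$.

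Next I would compute the other composite. Since $\phi_Y$ is exact, it carries the quasi-isomorphism $P\to F$ to a quasi-isomorphism $\phi_Y P\to\phi_Y F$ in $C(\Mod_Y)$, so $\phi_Y F$ is represented by $\phi_Y P$. Here the key input is Lemma \ref{preserve-h-flats}: because $P$ is h-flat in $C(Y)$, the complex $\phi_Y P$ is h-flat in $C(\Mod_Y)$. Hence $\phi_Y P$ is an h-flat resolution of $\phi_Y F$, and the all-modules functor $\bL f^*$ may be computed on it, so that $\bL f^*\phi_Y F$ is represented by $f^*\phi_Y P\in C(\Mod_X)$. It then remains to compare $\phi_X f^*P$ with $f^*\phi_Y P$; these agree via the standard natural isomorphism $\phi_X\circ f^*\simeq f^*\circ\phi_Y$ of underived functors $Qcoh\,Y\to\Mod_X$, since the inverse image (formed in all $\cO_X$-modules) of a quasi-coherent sheaf is again quasi-coherent and coincides with the quasi-coherent inverse image. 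This identification is termwise in $P$ and natural in $F$, and chaining the three representatives yields $\phi_X\bL f^*F\simeq\bL f^*\phi_Y F$.

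The only substantive ingredient is Lemma \ref{preserve-h-flats}; once $\phi$ is known to carry h-flats to h-flats, both derived inverse images are literally read off the same underived formula on the same resolution, and the rest is bookkeeping about exact functors and the definition of derived functors. The one point to watch is that I must take the h-flat resolution in the quasi-coherent category $C(Y)$, not merely in $C(\Mod_Y)$, so that the single complex $P$ simultaneously computes both $\bL f^*$'s.
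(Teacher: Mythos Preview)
Your proof is correct and follows essentially the same route as the paper: both reduce to an h-flat representative in $C(Y)$, invoke Lemma~\ref{preserve-h-flats} to ensure $\phi_Y$ preserves h-flatness, and then identify the two composites via the underived equality $\phi_X\cdot f^* = f^*\cdot\phi_Y$. Your version is simply more explicit about choosing a resolution $P\to F$ rather than assuming $F$ itself is h-flat.
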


\begin{proof}  Let $F\in D(Y)$ be h-flat. Then $\phi _X \cdot \bL
f^*(F)=\phi _X\cdot f^*(F).$ On the other hand $\phi _Y(F)$ is
h-flat by Lemma \ref{preserve-h-flats}. Hence $\bL f^*\cdot \phi _Y
(F)=f^* \cdot \phi _Y(F)=\phi _X\cdot f^*.$
\end{proof}

\begin{prop} a). The functors $(\bL f^*, \bR f_*)$ between $D(\Mod
_Y)$ and $D(\Mod _X)$ are adjoint.

b). These functors preserve the subcategories $D_{\qc}(\Mod _Y)$ and
$D_{\qc}(\Mod _X).$
\end{prop}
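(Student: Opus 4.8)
The plan is to deduce both statements from the corresponding facts at the abelian level together with the comparison results already established in this appendix. For part a) I would start from the chain-level adjunction: the functors $f^*$ and $f_*$ between $\Mod_Y$ and $\Mod_X$ are an adjoint pair, and extending them termwise to complexes yields a natural isomorphism of total Hom-complexes $\Hom^\bullet_{\Mod_X}(f^*P,I)\cong \Hom^\bullet_{\Mod_Y}(P,f_*I)$, hence an isomorphism on homotopy classes $\Hom_{K(\Mod_X)}(f^*P,I)=\Hom_{K(\Mod_Y)}(P,f_*I)$ for all $P\in C(\Mod_Y)$ and $I\in C(\Mod_X)$. Given $F\in D(\Mod_Y)$ and $G\in D(\Mod_X)$ I would choose an h-flat resolution $P\to F$ and an h-injective resolution $G\to I$, so that $\bL f^*F=f^*P$ and $\bR f_*G=f_*I$. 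Since $I$ is h-injective, the left-hand homotopy group already computes $\Hom_{D(\Mod_X)}(\bL f^*F,G)$.

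The main obstacle is the passage from the homotopy category to the derived category on the $Y$-side: $f_*$ need not preserve h-injectives (equivalently $f^*$ does not preserve acyclicity, only the acyclicity of \emph{h-flat} complexes, by the same tensor argument as in Lemma \ref{preserve-h-flats} together with the fact that $f^*$ preserves h-flats, \cite{Sp}), so I cannot simply identify $\Hom_{K(\Mod_Y)}(P,f_*I)$ with $\Hom_{D(\Mod_Y)}(F,\bR f_*G)$ by declaring $f_*I$ h-injective. I would instead construct the adjunction through explicit unit and counit. The chain-level unit $P\to f_*f^*P$, followed by $f_*$ applied to an h-injective resolution of $f^*P$, produces $\eta_F\colon F\to \bR f_*\bL f^*F$; dually the counit $\epsilon_G\colon \bL f^*\bR f_*G\to G$ is obtained from $f^*$ applied to the abelian counit, using that $f^*$ preserves h-flats. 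The two triangle identities for $(\bL f^*,\bR f_*)$ then follow from those of $(f^*,f_*)$ at the chain level, the derived functors being computed on adapted (h-flat, h-injective) resolutions. Alternatively one may invoke the general principle that an adjoint pair of additive functors whose left and right derived functors exist through adapted resolutions remains adjoint after deriving, for which I would cite \cite{Sp}.

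For part b) the two halves are handled by the comparison results of this appendix. That $\bL f^*$ preserves quasi-coherent cohomology is immediate from Lemma \ref{commute-inverse-image}: by Proposition \ref{equivalences} every object of $D_{\qc}(\Mod_Y)$ has the form $\phi_Y F'$ with $F'\in D(Y)$, and $\bL f^*\phi_Y F'=\phi_X\,\bL f^*F'$ lies in the essential image of $\phi_X$, which is contained in $D_{\qc}(\Mod_X)$. For $\bR f_*$ I would invoke the classical fact that, for a quasi-compact separated morphism, the higher direct images of quasi-coherent sheaves are quasi-coherent, in the form valid for unbounded complexes with quasi-coherent cohomology $\bigl($\cite{AlJeLi}$\bigr)$; this gives $\bR f_*\bigl(D_{\qc}(\Mod_X)\bigr)\subset D_{\qc}(\Mod_Y)$.

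I expect the genuine difficulty to be concentrated in part a), specifically in making the homotopy-level adjunction descend to the derived level without the convenience of $f_*$ preserving h-injectives; the unit/counit route circumvents this cleanly. Part b) is by contrast a bookkeeping application of Lemma \ref{commute-inverse-image}, Proposition \ref{equivalences}, and the cited coherence of higher direct images.
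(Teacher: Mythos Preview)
Your proposal is correct and matches the paper's approach. The paper's proof is much terser: for a) it simply cites \cite{Sp}, Prop.~6.7 (exactly the alternative you mention at the end), and for b) it uses Lemma~\ref{commute-inverse-image} and Proposition~\ref{equivalences} for $\bL f^*$ as you do, citing \cite{BoVdB}, Thm.~3.3.3 (rather than \cite{AlJeLi}) for $\bR f_*$. Your extended unit/counit discussion for a) is sound and is essentially what Spaltenstein does, but superfluous here since the paper treats the result as a black-box citation.
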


\begin{proof} a). It is [Sp],Prop.6.7. b).
For the functor $\bL f^* $ it follows from Proposition
\ref{equivalences} and Lemma \ref{commute-inverse-image}  and for
the functor $\bR f_*$ it is proved for example in [BoVdB],Thm.3.3.3
for the functor $\bR f_*.$
\end{proof}

The functors $f^*:QcohY\to QcohX,$ $f_*:QcohX\to QcohY$ are well
defined and clearly $f^*\cdot \phi _Y =\phi _X \cdot f^*.$ Hence
also $f_*\cdot Q_X=Q_Y\cdot f_*$ by adjunction. One defines the
derived functor
$$ \bR f_*:D(X)\to D(Y)$$
using  h-injectives in  $C(X).$

\begin{prop} \label{coherator-direct-image}
There exist a natural isomorphism of functor
$$\bR f_* \cdot \bR Q_X\simeq \bR Q_Y \cdot \bR f_*:D_{\qc}(\Mod
_X)\to D(Y).$$
\end{prop}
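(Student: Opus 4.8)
The plan is to deduce the identity by transporting the adjunction $(\bL f^*,\bR f_*)$ on the big categories $D(\Mod_X),D(\Mod_Y)$ through the equivalences of Proposition \ref{equivalences}, and then invoking uniqueness of adjoints; this sidesteps any explicit injective-resolution bookkeeping. First recall that by Proposition \ref{equivalences} the functors $\phi_X$ and $\bR Q_X$ are mutually inverse equivalences between $D(X)$ and $D_{\qc}(\Mod_X)$, and likewise for $Y$; in particular $\phi_X\cdot \bR Q_X\simeq \Id$ on $D_{\qc}(\Mod_X)$ and $\bR Q_X\cdot \phi_X\simeq \Id$ on $D(X)$. By the proposition above asserting that the big functors $\bL f^*$ and $\bR f_*$ are adjoint and preserve the subcategories $D_{\qc}$, they restrict to an adjoint pair $\bL f^*:D_{\qc}(\Mod_Y)\to D_{\qc}(\Mod_X)$ and $\bR f_*:D_{\qc}(\Mod_X)\to D_{\qc}(\Mod_Y)$.

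Next I would transport this restricted pair through $\phi_X,\phi_Y$. Since an equivalence carries adjunctions to adjunctions, the functors
\[
\tilde L:=\bR Q_X\cdot \bL f^*\cdot \phi_Y:D(Y)\to D(X),\qquad \tilde R:=\bR Q_Y\cdot \bR f_*\cdot \phi_X:D(X)\to D(Y)
\]
(with $\bL f^*,\bR f_*$ the big functors) form an adjoint pair $(\tilde L,\tilde R)$. I would then identify $\tilde L$ using Lemma \ref{commute-inverse-image}, which gives $\bL f^*\cdot \phi_Y\simeq \phi_X\cdot \bL f^*$ (the big inverse image composed with $\phi_Y$ agrees with $\phi_X$ composed with the quasi-coherent inverse image), whence $\tilde L\simeq \bR Q_X\cdot \phi_X\cdot \bL f^*\simeq \bL f^*$, the quasi-coherent $\bL f^*:D(Y)\to D(X)$. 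Since the quasi-coherent pair $(\bL f^*,\bR f_*)$ is itself an adjunction, uniqueness of right adjoints yields a natural isomorphism $\tilde R\simeq \bR f_*$ (quasi-coherent), that is, $\bR Q_Y\cdot \bR f_*\cdot \phi_X\simeq \bR f_*$ as functors $D(X)\to D(Y)$.

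Finally I would precompose this isomorphism with the equivalence $\bR Q_X:D_{\qc}(\Mod_X)\to D(X)$. Using $\phi_X\cdot \bR Q_X\simeq \Id$ on $D_{\qc}(\Mod_X)$, the left side becomes $\bR Q_Y\cdot \bR f_*$ (big) and the right side becomes $\bR f_*\cdot \bR Q_X$ (quasi-coherent), which is exactly the asserted isomorphism.

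The point needing the most care—and where I expect the only real subtlety to lie—is justifying that the big adjoint pair genuinely restricts to $D_{\qc}$ (this is supplied by the cited proposition) and that the quasi-coherent $\bL f^*\dashv \bR f_*$ is a bona fide adjunction, so that uniqueness of adjoints applies; granting these standard facts the argument is purely formal. A more hands-on alternative would resolve $N\in D_{\qc}(\Mod_X)$ by an h-injective $J$ in $C(\Mod_X)$ and use that $Q_X$ preserves h-injectives together with the underived identity $f_*Q_X=Q_Y f_*$ to reduce the claim to showing that the canonical map $Q_Y(f_*J)\to \bR Q_Y(f_*J)$ is a quasi-isomorphism; there the genuine content is an acyclicity statement for $f_*J$, which the adjoint-transport argument avoids entirely, so I would present the formal route as the primary proof.
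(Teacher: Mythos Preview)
Your adjoint-transport argument is formally valid, but in the logical structure of this paper it is circular. The step ``since the quasi-coherent pair $(\bL f^*,\bR f_*)$ is itself an adjunction, uniqueness of right adjoints yields $\tilde R\simeq \bR f_*$'' uses precisely the adjunction that the paper establishes only \emph{after} this proposition: the final corollary of the appendix derives $\bL f^*\dashv \bR f_*$ on $D(Y),D(X)$ from Corollary~\ref{maps-to-h-inj}, which in turn rests on Proposition~\ref{coherator-direct-image}. You flag this as a ``standard fact,'' but for unbounded complexes the quasi-coherent adjunction is exactly the delicate point the appendix is set up to prove; granting it here begs the question. If you want to keep your route, you must supply an independent reference for the unbounded adjunction on $D(\Qcoh)$ (e.g.\ Lipman's notes or a Brown-representability argument) and make clear you are importing it from outside the paper's development.

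The paper instead argues directly. It builds the comparison map $\theta:\bR f_*\cdot\bR Q_X\to \bR Q_Y\cdot\bR f_*$ from the underived identity $f_*\cdot Q_X=Q_Y\cdot f_*$ applied to an h-injective $J\in C(\Mod_X)$ (using that $Q_X$ preserves h-injectives), then shows $\theta$ is an isomorphism by a way-out-in-both-directions reduction to a single quasi-coherent sheaf $M$, where the statement becomes $\bR f_*(M)\simeq \bR f_*\phi(M)$ and is cited from \cite{ThTr}. This is exactly the ``hands-on alternative'' you sketch at the end and dismiss; in the paper's framework it is not optional but the actual content, and the acyclicity issue you mention is handled by the way-out lemma plus the bounded-cohomological-amplitude facts collected there. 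Your formal argument is cleaner once the qcoh adjunction is in hand, but here that adjunction is downstream, not upstream, of the proposition.
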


\begin{proof}  Let $I\in D_{\qc}(\Mod _X)$ be h-injective. Then $\bR
Q_X(I)=Q_X(I)$ is h-injective in $D(X).$ Hence $\bR f_*\cdot \bR
Q_X(I)=f\cdot Q_X(I).$ also $\bR f_*(I)=f_*(I).$ Since $f\cdot
Q_X(I)=Q_Y\cdot f(I)$ we get a morphism of functors
$$\theta :\bR f_* \cdot \bR Q_X \to \bR Q_Y \cdot \bR f_*.$$
We claim that $\theta $ is an isomorphism, i.e. $Q_Y \cdot
f_*(I)\simeq  \bR Q_Y \cdot f_*(I).$ We will use a lemma.

\begin{lemma} The functors $\bR f_*:D_{\qc}(\Mod _X)\to D_{\qc}(\Mod _Y),$
$\bR f_*:D(X)\to D(Y),$ and $\bR Q$ are way-out in both directions
(\cite{Ha}).
\end{lemma}

\begin{proof} Obviously all three functors are way-out left. The functor
$\bR f_*:D_{\qc}(\Mod _X)\to D_{\qc}(\Mod _Y)$ is way-out right by
[Li] (see also [BoVdB], Thm.3.3.3). For the functor $\bR Q$ see for
example the proof of Proposition 1.3 in [AlJeLi].

Let us prove that the functor $\bR f_*:D(X)\to D(Y)$ is way out
right. We may assume that $Y$ is affine and hence $f_*(-)=\Gamma
(X,-).$

Choose a finite affine open covering $\cU =\{ U_i\}_{i=1}^n$ of $X.$
For $F\in C(X)$ denote by
$$C_{\cU}(F):=0\to \oplus _{\vert I\vert =1}F_I\to
\oplus _{\vert I\vert =2}F_I \to ...$$ the corresponding (finite)
Cech resolution $F$ by alternating cochains. Here $I\subset
\{1,...,n\},$ $i:\cap _{i\in I}U_i\to X$ and $F_I=i_*i^*F\in C(X).$
The complex $F$ is quasi-isomorphic to $C_{\cU}(F).$ Notice that
each complex $F_I$ is acyclic for $\Gamma (X,-),$ i.e. $\bR \Gamma
(X,F_I)=\Gamma (X,F_I).$ This shows that if $F$ is in $D^{\leq
0}(X),$ then $\bR f_*F\in D^{\leq n-1}(Y).$
\end{proof}

Using the lemma it suffices to prove that $\theta (M)$ is an
isomorphism for a single quasi-coherent sheaf $M$ on $X$
(\cite{Ha},Ch.1,Prop.7.1,(iii)). In other words we may assume that
$I$ is an (bounded below) injective resolution in $\Mod _X$ of $\phi
(M)$ for $M\in Qcoh X.$ Then $Q_X(I)$ is an injective resolution of
$M$ in $QcohX.$ So $Q_Y\cdot f_*(I)=f_*\cdot Q_X(I)$ computes the
derived direct image of $M$ in the category of quasi-coherent
sheaves. On the other hand $f_*(I)$ computes the derived direct
image of $\phi (M).$ Since $f_*(I)\in D_{\qc}(\Mod _Y)$ it is
quasi-isomorphic to $\bR Q_Y\cdot f_*(I).$ So the needed assertion
becomes $\bR f_*(M)\simeq \bR f_*\cdot \phi (M).$ This is proved for
example in [ThTr],Appendix B,B.10.
\end{proof}

\begin{cor}\label{maps-to-h-inj} Let $I \in C(X)$ be h-injective and
$F\in C(Y)$ be h-flat. Then $$\Hom (F,f_*(I))=\Hom
_{D(X)}(F,f_*(I)).$$
\end{cor}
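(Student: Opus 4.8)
The plan is to prove the slightly stronger assertion that the natural comparison map from the naive Hom-complex $\Hom(F,f_*I)$ to $\bR\Hom(F,f_*I)$ is a quasi-isomorphism; taking $H^0$ then yields the displayed equality (note that $F$ and $f_*I$ both live on $Y$, so all the $\Hom$'s in sight are taken over $Y$). The whole idea is to transport the computation to $X$, where $I$ is genuinely h-injective, using the ordinary $(f^*,f_*)$ adjunction, and then transport it back to $Y$ using the derived adjunction $(\bL f^*,\bR f_*)$. First I would record that $f^*F$ is h-flat, since $f^*$ preserves h-flats (\cite{Sp}, Prop.~5.4, already used in the body), so that $f^*F$ represents $\bL f^*F$ in $D(X)$.

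Concretely, I would string together the identifications
\begin{align*}
\Hom(F, f_*I) &= \Hom(f^*F, I) = \Hom_{D(X)}(f^*F, I) \\
&= \Hom_{D(X)}(\bL f^* F, I) = \Hom_{D(Y)}(F, \bR f_* I) = \Hom_{D(Y)}(F, f_* I).
\end{align*}
The first equality is the degreewise $(f^*,f_*)$ adjunction, which is an isomorphism of Hom-complexes and descends to $\Ho(X)$. The second uses that $I$ is h-injective, so that $\Hom_{\Ho(X)}(-,I)=\Hom_{D(X)}(-,I)$. The third uses that $f^*F$ is h-flat, hence $f^*F\simeq\bL f^*F$ in $D(X)$. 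The fourth is the derived adjunction $(\bL f^*,\bR f_*)$, and the fifth uses once more that $I$ is h-injective, whence $\bR f_*I=f_*I$.

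The main obstacle is the fourth equality: the derived adjunction $(\bL f^*,\bR f_*)$ must be available between the quasi-coherent derived categories $D(Y)$ and $D(X)$, and it must be compatible with the underived $(f^*,f_*)$ adjunction used in the first step, so that the chain above really is the single natural comparison map and not just an abstract coincidence of $\Hom$-groups. This is exactly what the preceding appendix material supplies: Proposition \ref{equivalences} identifies $D(X)$ with $D_{\qc}(\Mod_X)$ (and likewise for $Y$), Lemma \ref{commute-inverse-image} shows that $\phi$ intertwines the two incarnations of $\bL f^*$, and Proposition \ref{coherator-direct-image} does the same for $\bR f_*$ through the coherator. Transporting the $\Mod$-level adjunction of $(\bL f^*,\bR f_*)$ across these equivalences produces the quasi-coherent derived adjunction, and naturality of its unit and counit guarantees compatibility with the chain-level adjunction. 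Finally I would observe that the entire chain remains valid after replacing $I$ by its shifts, so it computes all the $\Ext$-groups and not merely morphisms in the homotopy category; this upgrades the conclusion to the desired quasi-isomorphism of Hom-complexes and justifies reading the corollary, as it is applied in the proof of Lemma \ref{adjunction}, as the assertion $\Hom(F,f_*I)=\bR\Hom(F,f_*I)$.
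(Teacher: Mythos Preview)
Your argument is correct, but it differs from the paper's proof in its organization. The paper does not appeal to the derived adjunction $(\bL f^*,\bR f_*)$ between $D(Y)$ and $D(X)$; in fact, that adjunction is stated as the \emph{next} corollary and is deduced \emph{from} Corollary~\ref{maps-to-h-inj}. Instead, the paper writes $I=Q_X(J)$ for an h-injective $J\in C(\Mod_X)$, uses $f_*Q_X=Q_Yf_*$ and the $(\phi,Q)$ adjunction to rewrite $\Hom(F,f_*I)$ as $\Hom(\phi(F),f_*(J))$, invokes Spaltenstein's version of the corollary in $D(\Mod_Y)$ (valid since $\phi(F)$ is h-flat by Lemma~\ref{preserve-h-flats} and $J$ is h-injective), and then comes back to $D(Y)$ via $\bR Q_Y$ and the quasi-isomorphism $\bR Q_Y f_*(J)\simeq Q_Y f_*(J)$ established in Proposition~\ref{coherator-direct-image}.

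Your route is to establish the quasi-coherent derived adjunction first, by transporting the $\Mod$-level adjunction across the equivalences of Proposition~\ref{equivalences} using Lemma~\ref{commute-inverse-image} and Proposition~\ref{coherator-direct-image}; then the corollary becomes the formal chain you wrote. This is legitimate and uses precisely the same inputs, only reversing the order in which the two consequences are drawn. The paper's approach has the virtue of being an explicit computation with the coherator, making the dependence on \cite{Sp} and on Proposition~\ref{coherator-direct-image} transparent at each step; your approach is cleaner once the adjunction is in hand, and it makes immediately visible why the statement upgrades to the $\bR\Hom$ level needed in Lemma~\ref{adjunction}. Be aware, though, that within the paper's logical flow your fourth equality would look circular unless you spell out, as you do, that the adjunction is obtained independently by transport rather than by citing the subsequent corollary.
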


\begin{proof} An analogous statement for the category $D(\Mod _X)$
is proved in \cite{Sp}.

We may assume that $I=Q_X(J)$ for an h-injective $J\in D(\Mod _X).$
Then
$$\Hom (F,f_*\cdot Q_X(J))=\Hom (F,Q_Y\cdot f_*(J))=\Hom (\phi (F),
f_*(J)).$$ Since $\phi (F)$ is h-flat (Lemma \ref{preserve-h-flats})
by [Sp] we have
$$\Hom (\phi (F),f_*(J))=\Hom _{D(\Mod _Y)}(\phi (F),f_*(J)),$$
and by adjunction $\Hom _{D(\Mod _Y)}(\phi (F),f_*(J))= \Hom _{D(X)}
(F,\bR Q_Y\cdot f_*(J)).$ But in the proof of Proposition
\ref{coherator-direct-image} we established a quasi-isomorphism $\bR
Q_Y\cdot f_*(J)\simeq Q_Y\cdot f_*(J).$ This proves the lemma.
\end{proof}

\begin{cor} The functors $\bL f^*:D(Y)\to D(X)$ and $\bR f_*
:D(X)\to D(Y)$ are adjoint.
\end{cor}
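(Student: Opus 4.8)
The plan is to transport the desired adjunction across the equivalences $\phi_X : D(X) \isomoto D_{\qc}(\Mod_X)$ and $\phi_Y : D(Y) \isomoto D_{\qc}(\Mod_Y)$ of Proposition \ref{equivalences}, thereby reducing it to the already-established adjunction $(\bL f^*, \bR f_*)$ on the full module categories. Fix $F \in D(Y)$ and $G \in D(X)$; the goal is a natural isomorphism $\Hom_{D(X)}(\bL f^* F, G) \simeq \Hom_{D(Y)}(F, \bR f_* G)$, where $\bL f^*$ and $\bR f_*$ denote the quasi-coherent versions.

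First I would use that $\phi_X$ is fully faithful to rewrite $\Hom_{D(X)}(\bL f^* F, G) = \Hom_{D(\Mod_X)}(\phi_X \bL f^* F, \phi_X G)$, and then apply Lemma \ref{commute-inverse-image} to replace $\phi_X \bL f^* F$ by $\bL f^* \phi_Y F$, the module-level inverse image. At this point the module adjunction $(\bL f^*, \bR f_*)$ between $D(\Mod_Y)$ and $D(\Mod_X)$ gives $\Hom_{D(\Mod_X)}(\bL f^* \phi_Y F, \phi_X G) = \Hom_{D(\Mod_Y)}(\phi_Y F, \bR f_* \phi_X G)$, where $\bR f_*$ is now the module-level direct image.

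The one remaining ingredient, and the step carrying the real content, is the compatibility $\phi_Y \bR f_* G \simeq \bR f_* \phi_X G$, identifying the quasi-coherent direct image with the module-level one under $\phi$. To obtain it I would feed $\phi_X G \in D_{\qc}(\Mod_X)$ into Proposition \ref{coherator-direct-image}: since $\bR Q_X \phi_X G \simeq G$ by Proposition \ref{equivalences}, that proposition yields $\bR f_* G \simeq \bR Q_Y(\bR f_* \phi_X G)$ in $D(Y)$, where the $\bR f_*$ on the right is module-level and lands in $D_{\qc}(\Mod_Y)$. Applying $\phi_Y$ and using that $\phi_Y \bR Q_Y \simeq \Id$ on $D_{\qc}(\Mod_Y)$ then gives exactly $\phi_Y \bR f_* G \simeq \bR f_* \phi_X G$. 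Substituting this and invoking full faithfulness of $\phi_Y$ once more closes the chain: $\Hom_{D(\Mod_Y)}(\phi_Y F, \bR f_* \phi_X G) = \Hom_{D(\Mod_Y)}(\phi_Y F, \phi_Y \bR f_* G) = \Hom_{D(Y)}(F, \bR f_* G)$.

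The main obstacle is bookkeeping rather than mathematics: one must keep straight the two distinct functors both written $\bR f_*$ (the quasi-coherent one $D(X)\to D(Y)$ defined via h-injectives in $C(X)$, and the module one $D(\Mod_X)\to D(\Mod_Y)$), and verify that Proposition \ref{coherator-direct-image} is applied with the coherator on the correct side, so that $\phi$ genuinely exchanges the two. Naturality of each isomorphism in $F$ and $G$ is inherited from the naturality in Proposition \ref{equivalences}, Lemma \ref{commute-inverse-image}, and the module adjunction, so the composite is a natural adjunction isomorphism.
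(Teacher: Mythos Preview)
Your proof is correct, but it takes a different route from the paper's. The paper simply says the adjunction ``follows immediately from Corollary \ref{maps-to-h-inj}'': choosing $F$ h-flat in $C(Y)$ and $I$ h-injective in $C(X)$, one has $\Hom_{D(X)}(\bL f^*F,I)=\Hom_{Ho(X)}(f^*F,I)=\Hom_{Ho(Y)}(F,f_*I)$ by the underived adjunction, and Corollary \ref{maps-to-h-inj} says this last complex already computes $\Hom_{D(Y)}(F,f_*I)=\Hom_{D(Y)}(F,\bR f_*I)$. So the paper works directly with representatives and invokes Corollary \ref{maps-to-h-inj} (which in turn rests on Proposition \ref{coherator-direct-image} and Spaltenstein).

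You instead transport the module-level adjunction across the equivalences $\phi_X,\phi_Y$, using Lemma \ref{commute-inverse-image} for the inverse image and Proposition \ref{coherator-direct-image} (plus $\phi\circ\bR Q\simeq\Id$ on $D_{\qc}$) to establish the compatibility $\phi_Y\,\bR f_* \simeq \bR f_*\,\phi_X$ for the direct image. This bypasses Corollary \ref{maps-to-h-inj} entirely and is arguably more conceptual: it makes explicit that the quasi-coherent $\bL f^*,\bR f_*$ are simply the restrictions of the module-level functors under the equivalence of Proposition \ref{equivalences}, after which adjunction is formal. The paper's approach is terser once Corollary \ref{maps-to-h-inj} is in hand; yours shows more clearly \emph{why} the two versions of $\bR f_*$ agree, which is the real content either way.
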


\begin{proof} It follows immediately from Corollary
\ref{maps-to-h-inj}.
\end{proof}

\end{document}